\documentclass{amsart}

\usepackage[]{fontenc}
\usepackage[latin1, utf8]{inputenc}
\usepackage{geometry}
\usepackage{fancyhdr}
\usepackage{setspace}
\usepackage[all]{xy}
\usepackage{amssymb}
\usepackage{mathtools}

\usepackage{amsfonts}
\usepackage{amsmath}
\usepackage{amsthm}
\usepackage{a4wide}
\usepackage{textcomp}
\usepackage{epsfig}
\usepackage{tikz-cd}

\allowdisplaybreaks

\parindent      0mm
\parskip        2mm

\DeclareMathOperator{\gr}{gr}
\DeclareMathOperator{\Lag}{Lag}
\DeclareMathOperator{\Leg}{Leg}

\DeclareMathOperator{\im}{im}

\DeclareMathOperator{\ind}{ind}

\DeclareMathOperator{\id}{id}
\DeclareMathOperator{\can}{can}
\DeclareMathOperator{\pt}{pt}

\newcommand{\mfKill}[1]{}

\usepackage{aliascnt}
\usepackage[colorlinks, linkcolor=black,citecolor=black, urlcolor=black]{hyperref}

\newtheorem{thm}{Theorem}[section]

\newaliascnt{prop}{thm}
\newtheorem{prop}[prop]{Proposition}
\aliascntresetthe{prop}

\newaliascnt{defi}{thm}

\aliascntresetthe{defi}

\newaliascnt{ex}{thm}

\aliascntresetthe{ex}

\newaliascnt{rmk}{thm}
\newtheorem{rmk}[rmk]{Remark}
\aliascntresetthe{rmk}

\newaliascnt{lemma}{thm}
\newtheorem{lemma}[lemma]{Lemma}
\aliascntresetthe{lemma}

\newaliascnt{cor}{thm}

\aliascntresetthe{cor}

\newaliascnt{conjecture}{thm}

\aliascntresetthe{conjecture}

\usepackage{tikz,pgfplots}
\usetikzlibrary{decorations.markings, intersections, arrows.meta, positioning, calc, math, matrix}
\pgfplotsset{compat=1.18}

\tikzset{middlearrow/.style={
        decoration={markings,
            mark= at position 0.5 with {\arrow{#1}} ,
        },
        postaction={decorate}
    }
}

\title{Contact non-squeezing at large scale via generating functions}
\author{Maia Fraser}
\address{University of Ottawa, Ottawa, ON K1N-6N5, Canada}
\email{mfrase8@uottawa.ca}

\author{Sheila Sandon}
\address{Universit\'e de Strasbourg, CNRS, IRMA UMR 7501, F-67000 Strasbourg, France}
\email{sandon@math.unistra.fr}

\author{Bingyu Zhang}
\address{Centre for Quantum Mathematics, Syddansk Universitet, Campusvej 55, 5230 Odense M, Denmark}
\email{bingyuzhang@imada.sdu.dk}

\begin{document}

\begin{abstract}
\noindent 
Using SFT techniques,
Eliashberg, Kim and Polterovich (2006)
proved that
if $\pi R_2^2 \leq K \leq \pi R_1^2$
for some integer $K$
then there is no contact squeezing in $\mathbb{R}^{2n} \times S^1$
of the prequantization of the ball of radius $R_1$
into the prequantization of the ball of radius $R_2$.
This result was extended
to the case of balls of radius $R_1$ and $R_2$
with $1 \leq \pi R_2^2 \leq \pi R_1^2$
by Chiu (2017) and the first author (2016),
using respectively microlocal sheaves and SFT.
In the present article
we recover this general contact non-squeezing theorem
using generating functions,
a classical method
based on finite dimensional Morse theory.
More precisely,
we develop an equivariant version,
with respect to a certain action of a finite cyclic group,
of the generating function homology
for domains of $\mathbb{R}^{2n} \times S^1$
defined by the second author (2011).
A key role in the construction
is played by translated chains of contactomorphisms,
a generalization of translated points.
\end{abstract}

\maketitle


\section{Introduction}

Consider the Euclidean space
$\mathbb{R}^{2n}$,
with coordinates $(x_1, y_1, \dots, x_n, y_n)$,
equipped with the standard symplectic form
$\omega_0 = \sum_{j = 1}^n dx_j \wedge dy_j$.
The seminal symplectic non-squeezing theorem
by Gromov \cite{Gromov}
says that if $R_1 > R_2$
then there is no symplectic embedding
of the open ball $B^{2n} (R_1)$
of radius $R_1$ in $(\mathbb{R}^{2n}, \omega_0)$
into the open cylinder $B^2 (R_2) \times \mathbb{R}^{2n-2}$.
At first sight it might seem that no similar result
can hold in contact topology.
Indeed, consider the Euclidean space
$\mathbb{R}^{2n+1}$,
with coordinates $(x_1, y_1, \dots, x_n, y_n, \theta)$,
endowed with the standard contact structure
$\xi_0 = \ker \big( d\theta
- \sum_{j = 1}^n \frac{x_j dy_j - y_j dx_j}{2} \big)$;
then contact transformations
of the form
\[
(x_1, y_1, \dots, x_n, y_n, \theta)
\mapsto (c x_1, c y_1, \dots, c x_n, c y_n, c^2 \theta) 
\]
for $c \in \mathbb{R}_{> 0}$ small enough
map any given domain into an arbitrarily small one.
However,
in 2006 Eliashberg, Kim and Polterovich \cite{EKP}
discovered the following surprising
contact non-squeezing phenomenon
in $\mathbb{R}^{2n} \times S^1$,
where $S^1$ is taken to be the quotient $\mathbb{R}/\mathbb{Z}$
and $\mathbb{R}^{2n} \times S^1$ is endowed with the contact structure,
still denoted $\xi_0$,
induced by the standard contact structure on $\mathbb{R}^{2n+1}$.
Let $\mathcal{V}_1$ and $\mathcal{V}_2$
be two open domains in 
$( \mathbb{R}^{2n} \times S^1, \xi_0 )$.
A \emph{contact squeezing}
of $\mathcal{V}_1$ into $\mathcal{V}_2$
is a compactly supported
contact isotopy $\{\phi_t\}$
from the closure of $\mathcal{V}_1$
into $\mathbb{R}^{2n} \times S^1$
such that $\phi_0$ is the inclusion
and $\phi_1$ maps the closure of $\mathcal{V}_1$
into $\mathcal{V}_2$.
If the closure of $\mathcal{V}_1$ is compact,
the existence of a contact squeezing
of $\mathcal{V}_1$ into $\mathcal{V}_2$
is equivalent to the existence
of a compactly supported contactomorphism
of $(\mathbb{R}^{2n} \times S^1, \xi_0)$,
contact isotopic to the identity\footnote{
By a
compactly supported contactomorphism
contact isotopic to the identity
we always mean a contactomorphism
that is isotopic to the identity
through a compactly supported contact isotopy.},
mapping the closure of $\mathcal{V}_1$
into $\mathcal{V}_2$.
As in \cite{Fraser},
we call \emph{coarse contact squeezing}
of $\mathcal{V}_1$ into $\mathcal{V}_2$
a compactly supported contactomorphism
of $(\mathbb{R}^{2n} \times S^1, \xi_0)$,
not necessarily contact isotopic to the identity\footnote{
We do not know whether there exist
compactly supported contactomorphisms
of $(\mathbb{R}^{2n} \times S^1, \xi_0)$
that are not contact isotopic to the identity,
thus whether the notion
of contact squeezing and coarse contact squeezing
are indeed different.},
mapping the closure of $\mathcal{V}_1$ into $\mathcal{V}_2$.
For a domain $\mathcal{U}$ of $\mathbb{R}^{2n}$
we denote by $\widehat{\mathcal{U}}$
the domain $\mathcal{U} \times S^1$
of $\mathbb{R}^{2n} \times S^1$.
The non-squeezing theorem of Eliashberg, Kim and Polterovich \cite{EKP}
states that if $\pi R_2^2 \leq K \leq \pi R_1^2$
for some $K \in \mathbb{Z}$
then there is no coarse contact squeezing
(hence no contact squeezing)
of $\widehat {B^{2n} (R_1)}$
into $\widehat {B^{2n} (R_2)}$.
In contrast,
the same authors also proved that
if $\pi R_1^2 < 1$ and $n > 1$
then there is a contact squeezing
of $\widehat {B^{2n} (R_1)}$ into $\widehat {B^{2n} (R_2)}$
for any $R_2$
(while it is pointed out in \cite{EKP}
that in dimension $3$
the contact shape invariant \cite{Eli91}
obstructs coarse contact squeezing
of $\widehat {B^{2} (R_1)}$
into $\widehat {B^{2} (R_2)}$
for any $R_2 \leq R_1$).

Contact squeezing for $\pi R_2^2 \leq \pi R_1^2 < 1$
(in dimension higher than $3$)
is a manifestation of flexibility,
and is proved in \cite{EKP}
by a geometric construction
that uses a positive contractible loop of contactomorphisms
of the standard contact sphere $(S^{2n-1}, \xi_0)$.
On the other hand,
(coarse) non-squeezing for $\pi R_2^2 \leq K \leq \pi R_1^2$
is a rigidity phenomenon,
which is proved in \cite{EKP}
with holomorphic curves techniques coming from SFT,
and reproved
(in its non-coarse version)
by the second author \cite{San11} with generating functions.
The case $1 < \pi R_2^2 \leq \pi R_1^2$
with no integers between $\pi R_2^2$ and $\pi R_1^2$
was left open in \cite{EKP} and \cite{San11},
and settled by Chiu \cite{Chiu}
(in the non-coarse version)
and the first author \cite{Fraser}
(in the apriori stronger coarse version)
using respectively microlocal sheaves and SFT.
In the present paper we recover
(the non-coarse version of) this result
by a proof that uses generating functions,
in the continuation of \cite{San11}.
More precisely we prove the following theorem.

\begin{thm}[Contact non-squeezing at large scale]
\label{theorem: main}
For any $R_1$ and $R_2$ with $1 \leq \pi R_2^2 \leq \pi R_1^2$
there is no contact squeezing of $\widehat {B^{2n} (R_1)}$
into $\widehat {B^{2n} (R_2)}$.
\end{thm}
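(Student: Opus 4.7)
The plan is to refine the generating function homology approach of \cite{San11} so that it detects noninteger jumps between $\pi R_2^2$ and $\pi R_1^2$, by introducing an equivariant version. The starting point is that any compactly supported contactomorphism $\phi$ of $(\mathbb{R}^{2n} \times S^1, \xi_0)$ contact isotopic to the identity admits a compactly supported $\mathbb{Z}$-equivariant lift $\widetilde{\phi}$ to the universal cover $\mathbb{R}^{2n+1}$, where $\mathbb{Z}$ acts by the deck transformation. The graph of $\widetilde{\phi}$, viewed as a Legendrian in a suitable $1$-jet bundle, carries a generating function $F_\phi$ quadratic at infinity, unique up to the standard stabilization/fibre-bundle equivalence.

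The first step is to promote the generating function cohomology of the sublevel/superlevel pairs of $F_\phi$ to a $\mathbb{Z}/k$-equivariant theory, where the action is induced by the deck transformation of the $k$-fold cover $\mathbb{R}^{2n} \times \mathbb{R}/k\mathbb{Z} \to \mathbb{R}^{2n} \times S^1$. Under this lift, the critical points of $F_\phi$ that survive the equivariant min-max construction should correspond precisely to the translated chains of contactomorphisms announced in the abstract: finite cyclic sequences of points cyclically permuted by $\phi$ along the Reeb direction, with a prescribed total translation. These generalize ordinary translated points (the case of chains of length one) and provide the extra critical values that the non-equivariant theory cannot see because integer jumps of $\theta$ are invisible on the $S^1$-quotient.

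The second step is to define, for each open domain $\mathcal{V} \subset \mathbb{R}^{2n} \times S^1$ and each integer $k \geq 1$, a capacity-type invariant $c_k(\mathcal{V})$ built from the spectral numbers of equivariant min-max classes, taken over a suitable class of compactly supported contactomorphisms adapted to $\mathcal{V}$. Two properties are needed: (i) \emph{monotonicity}, so that any contact embedding $\mathcal{V}_1 \hookrightarrow \mathcal{V}_2$ realized by a contact isotopy forces $c_k(\mathcal{V}_1) \leq c_k(\mathcal{V}_2)$; and (ii) a sharp computation, or at least a lower bound, showing that $c_k(\widehat{B^{2n}(R)})$ accumulates at $\pi R^2$ as $k$ ranges over large positive integers. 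Property (ii) is verified on explicit model contact Hamiltonian flows supported in $\widehat{B^{2n}(R)}$, whose generating functions can be diagonalized; the key observation is that the classical theory sees only integer jumps in the $\theta$-direction because $\theta \in S^1$ has period one, while the $\mathbb{Z}/k$-refinement resolves jumps of size $1/k$ and thus, as $k \to \infty$, detects every real value below $\pi R^2$.

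Granting both properties the theorem is immediate: a contact squeezing of $\widehat{B^{2n}(R_1)}$ into $\widehat{B^{2n}(R_2)}$ with $1 \leq \pi R_2^2 < \pi R_1^2$ would give $c_k(\widehat{B^{2n}(R_1)}) \leq c_k(\widehat{B^{2n}(R_2)})$ for every $k$, contradicting (ii) for any $k$ large enough to resolve the gap $\pi R_1^2 - \pi R_2^2$. The principal obstacle I anticipate is equivariant monotonicity. In the non-equivariant setting it rests on the fact that composition of contactomorphisms corresponds, up to stabilization, to a natural summation operation on generating functions, and one must now check that this operation is compatible with the $\mathbb{Z}/k$-action on critical points coming from translated chains, and that translated chains are stable under perturbations of $F_\phi$ and under the stabilization ambiguity. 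This compatibility is precisely where the passage from translated points to translated chains announced in the abstract is decisive, and where the bulk of the technical work of the construction must go.
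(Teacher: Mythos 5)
Your high-level strategy (an equivariant refinement of the generating function homology of \cite{San11} whose new critical objects are translated chains) is indeed the paper's strategy, but two of your concrete steps fail. The first is the construction itself: the $\mathbb{Z}_k$-action cannot be the deck transformation of the $k$-fold cover acting on the sublevel sets of $F_\phi$. The critical points of $F_\phi$ are in bijection with the translated \emph{points} of $\phi$, and no equivariant min-max refinement of that single function produces translated $k$-chains, which are $k$-tuples of points. One must build a new function out of $F_\phi$ --- in the paper a $k$-fold composition-formula function $\mathcal{P}_F^{(k)}$ defined on $\mathbb{R}^{(2n+2)k}\times\mathbb{R}^{Nk}$, modelled on Allais's formula for detecting $k$-periodic points of a Hamiltonian symplectomorphism --- whose critical points are exactly the translated $k$-chains and on which $\mathbb{Z}_k$ acts by cyclically permuting the $k$ blocks of variables. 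This is where the actual work of the construction lives, and it is absent from your outline.

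The second, more serious, problem is that your property (ii) is false as stated, and your concluding argument never uses the hypothesis $1\le\pi R_2^2$: if the equivariant invariants detected ``every real value below $\pi R^2$'' as $k\to\infty$, you would obtain non-squeezing for all $R_2<R_1$, contradicting the squeezing of $\widehat{B^{2n}(R_1)}$ for $\pi R_1^2<1$, $n>1$, proved in \cite{EKP}. The values the equivariant theory can certify are only the rationals $k/l$ with $k$ prime and $0<l<k$ --- hence only rationals strictly greater than $1$ --- for two reasons your proposal does not address. First, invariance under conjugation (which is what makes the domain homology contact invariant, hence usable against a squeezing) holds only for action windows with endpoints in $k\mathbb{Z}\cup\{\pm\infty\}$, because only translated $k$-chains whose per-step Reeb shift is an integer are preserved by conjugation; this is the same contact-versus-symplectic subtlety already present in \cite{San11}, not the ``summation of generating functions'' issue you flag. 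Second, the computation for the ball in degree $2nl$ requires the $\mathbb{Z}_k$-action on the critical sphere of action near $l\pi R^2$ to be free, which holds exactly when $l<k$ with $k$ prime, so that the quotient is a lens space and the relative equivariant homology is $\mathbb{Z}_k$ rather than $0$. Combining the two constraints, one separates $\widehat{B^{2n}(R_1)}$ from $\widehat{B^{2n}(R_2)}$ in the window $(k,\infty]$ and degree $2nl$ precisely when $\pi R_2^2\le k/l<\pi R_1^2$ with $l<k$; such $k/l$ are dense in $(1,\infty)$ and only there, which is exactly why the hypothesis $1\le\pi R_2^2$ is needed and where it enters the proof.
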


A different approach to prove \autoref{theorem: main}
with generating functions
has been outlined by the first author in \cite{Fraser - poster},
and is developed in a work in preparation
with Traynor \cite{FT}.
Following an idea introduced in \cite{Chiu},
and exploited also in \cite{Fraser}
in the context of holomorphic curves,
our proof of \autoref{theorem: main}
uses an equivariant homology
with respect to a certain action
of a cyclic group $\mathbb{Z}_k$.
Specifically,
we develop a $\mathbb{Z}_k$-equivariant version
of the generating function homology
for domains of $(\mathbb{R}^{2n} \times S^1, \xi_0)$
that is defined in \cite{San11}
(while the approach in \cite{Fraser - poster} and \cite{FT}
uses the homology groups of \cite{San11}
without introducing any cyclic action).
In order to explain this
we first recall the idea of the proof in \cite{San11}
of non-squeezing for integers.

The homology groups defined in \cite{San11}
are a generalization to domains
of $(\mathbb{R}^{2n} \times S^1, \xi_0)$
of the symplectic homology groups
for domains of $(\mathbb{R}^{2n}, \omega_0)$
defined by Traynor \cite{Traynor}.
For any compactly supported 
Hamiltonian symplectomorphism
$\varphi$ of $(\mathbb{R}^{2n}, \omega_0)$
and $a \leq b$ in $\mathbb{R} \cup \{\pm \infty\}$
not belonging to the action spectrum of $\varphi$,
Traynor defined $G_{\ast}^{(a, b]} (\varphi)$
to be the relative homology
of the sublevel sets at $a$ and $b$
of any generating function quadratic at infinity of $\varphi$.
She then defined the homology
$G_{\ast}^{(a, b]} (\mathcal{U})$
of a domain $\mathcal{U}$ of $(\mathbb{R}^{2n}, \omega_0)$
by a limit process
over compactly supported
Hamiltonian symplectomorphisms
supported in $\mathcal{U}$.
Symplectic invariance of these groups,
i.e.\ the fact that
$G_{\ast}^{(a, b]} \big(\psi(\mathcal{U})\big)
\cong G_{\ast}^{(a, b]} (\mathcal{U})$
for every compactly supported Hamiltonian symplectomorphism
$\psi$ of $(\mathbb{R}^{2n}, \omega_0)$,
is proved using invariance by conjugation
of the groups associated to Hamiltonian symplectomorphisms,
which follows from the fact that
the critical points of any generating function quadratic at infinity
of a Hamiltonian symplectomorphism $\varphi$
are in 1--1 correspondence with the fixed points of $\varphi$,
with critical values given
by the symplectic action,
and the fact that the action spectrum of a Hamiltonian symplectomorphism
is invariant by conjugation.
Similarly, 
in \cite{San11} the second author defined
the homology $G_{\ast}^{(a, b]} (\phi)$
of a compactly supported contactomorphism $\phi$
of $(\mathbb{R}^{2n} \times S^1, \xi_0)$
contact isotopic to the identity
and, by a limit process,
the homology $G_{\ast}^{(a, b]} (\mathcal{V})$
of a domain $\mathcal{V}$
of $( \mathbb{R}^{2n} \times S^1 , \xi_0 )$.
A crucial difference
with respect to the symplectic case
is that this homology is contact invariant,
i.e.\ $G_{\ast}^{(a, b]} \big(\psi(\mathcal{V})\big) \cong G_{\ast}^{(a, b]} (\mathcal{V})$
for every compactly supported contactomorphism
$\psi$ of $(\mathbb{R}^{2n} \times S^1, \xi_0)$
contact isotopic to the identity,
only if $a$ and $b$ are integers
(or $a, b \in \{\pm \infty\}$).
This difference is due to the fact that 
the critical points of any generating function
of a contactomorphism $\phi$
of $(\mathbb{R}^{2n} \times S^1, \xi_0)$
are in 1--1 correspondence with the translated points of $\phi$,
with critical values given by the contact action,
and the fact that translated points
of contactomorphisms of $(\mathbb{R}^{2n} \times S^1, \xi_0)$
are invariant by conjugation
only if they have integral action.
More precisely,
recall that a translated point
of a contactomorphism $\phi$
of $(\mathbb{R}^{2n} \times S^1, \xi_0)$
with respect to the standard contact form
$\alpha_0 = d\theta
- \sum_{j = 1}^n \frac{x_j dy_j - y_j dx_j}{2}$
is a point $p$ of $\mathbb{R}^{2n} \times S^1$
such that $p$ and $\phi(p)$
are in the same Reeb orbit
and $g(p) = 0$,
where $g$ is the conformal factor of $\phi$,
i.e.\ the function defined by the relation
$\phi^{\ast}\alpha_0 = e^g \alpha_0$.
If $\phi$ is compactly supported
and contact isotopic to the identity
then it can be uniquely lifted
to a compactly supported contactomorphism $\Phi$
of $(\mathbb{R}^{2n+1}, \xi_0)$
that is equivariant by translation by $1$
in the $\theta$-direction.
The length of the Reeb chord
from $P$ to $\Phi(P)$
for any lift $P$ of $p$ to $\mathbb{R}^{2n+1}$
is then called the (contact) action
of the translated point $p$ of $\phi$.
If $K$ is an integer
then there is a 1--1 correspondence
between the translated points of action $K$ of $\phi$
and the translated points of action $K$
of any conjugation $\psi \circ \phi \circ \psi^{-1}$.
Using this,
it is shown in \cite{San11}
that if $a, b \in \mathbb{Z} \cup \{\pm \infty\}$
then $G_{\ast}^{(a, b]} (\psi \circ \phi \circ \psi^{-1})
\cong G_{\ast}^{(a, b]} (\phi)$
for every compactly supported contactomorphism
$\psi$ of $(\mathbb{R}^{2n} \times S^1, \xi_0)$
contact isotopic to the identity,
and in turn this is used to show that
the groups $G_{\ast}^{(a, b]} (\mathcal{V})$
are contact invariants.
It is also proved that for any domain $\mathcal{U}$
of $(\mathbb{R}^{2n}, \omega_0)$
we have $G_{\ast}^{(a, b]} (\widehat{\mathcal{U}} )
\cong
G_{\ast}^{(a, b]} (\mathcal{U}) \otimes H_{\ast} (S^1)$.
The fact that there is no contact squeezing
of $\widehat{B^{2n}(R_1)}$ into $\widehat{B^{2n}(R_2)}$
if $\pi R_2^2 \leq K \leq \pi R_1^2$
for some integer $K$
then follows from the calculation
of the homology groups of balls in $(\mathbb{R}^{2n}, \omega_0)$
done by Traynor \cite{Traynor}.
For a ball $B^{2n} (R)$
and $a \in \mathbb{R}_{>0}$,
Traynor proved that,
with coefficients in $\mathbb{Z}_2$
and any integer $l> 0$,
\[
G_{2nl}^{(a, \infty]} \big(B^{2n}(R)\big)
\cong \begin{cases}
\mathbb{Z}_2 \quad &\text{if }
(l-1) \pi R^2 \leq a < l \pi R^2 \\
0 \quad &\text{otherwise,}
\end{cases}
\]
and moreover that for $R_2 < R_1$ with
$(l-1) \pi R_2^2 \leq a < l \pi R_2^2$
and $(l-1) \pi R_1^2 \leq a < l \pi R_1^2$
the homomorphism
\[
G_{2nl}^{(a, \infty]} \big(B^{2n}(R_1)\big) \rightarrow
G_{2nl}^{(a, \infty]} \big(B^{2n}(R_2)\big)
\]
induced by the inclusion
of $B^{2n}(R_2)$ into $B^{2n}(R_1)$
is an isomorphism.

\begin{figure}[htbp]
    \centering

\begin{tikzpicture}

\draw[help lines, gray!30](0,0) grid[step={($(3/2, 1) - (0, 0)$)}] (7.7,4.2);

\draw[->]    (0,0)--(9,0) node[anchor=north east]  {$a$} ;

\node[anchor=north] at (0,0)   {$0$} ;
 \node[anchor=north] at (3/2,0)   {$\pi R^2$} ;
\foreach \i in {2,3,4,5} 
{ \node[anchor=north] at (3*\i/2,0)   {$\i \pi R^2$} ;}

\foreach \i in {1,2,3,4}
{ \node[anchor=east] at (9.5,\i)   {$l={\i} $} ;}

\draw[thick]  (0,1)--({3/2-0.05},1)     ;
\foreach \i in {2,3,4} 
{ \draw[thick]  ({3*(\i -1)/2},\i)--({3*\i/2-0.05},\i)     ;}

\foreach \i in {2,3,4} 
{ \draw[fill=black] ({3*(\i -1)/2},\i) circle (0.05);}
\foreach \i in {1,2,3,4} 
{ \draw ({3*\i/2},\i) circle (0.05);}

\foreach \i in {1,2,3,4}
{ \node[anchor=south] at ({3*\i/2-3/4},\i)   {$\mathbb{Z}_2$} ;}

\end{tikzpicture}

    \caption{The homology groups $G_{2nl}^{(a,\infty]}(B^{2n}(R))$.}
    \label{fig:non-equivariant barcode}
\end{figure}

As shown in \cite{San11},
these results imply (the non-coarse version of)
the non-squeezing theorem of \cite{EKP}.
Indeed, let $\pi R_2^2 \leq K \leq \pi R_1^2$
for some integer $K$
and suppose by contradiction that there is
a contact squeezing of $\widehat{B^{2n}(R_1)}$
into $\widehat{B^{2n}(R_2)}$.
This then induces a contact squeezing
of a neighborhood of $\widehat{B^{2n}(R_1)}$
into $\widehat{B^{2n}(R_2)}$,
so without loss of generality we may assume 
$K < \pi R_1^2$.
By the contact isotopy extension theorem
there is a compactly supported contactomorphism $\psi$
of $(\mathbb{R}^{2n} \times S^1, \xi_0)$
contact isotopic to the identity
such that
$\psi \big( \widehat{B^{2n}(R_1)} \big) \subset  \widehat{B^{2n}(R_2)}$.
Take $R_3$ large enough,
so that $\psi \big( \widehat{B^{2n}(R_3)} \big) =  \widehat{B^{2n}(R_3)}$.
We then have a commutative diagram
\[
\begin{tikzcd}
G_{\ast}^{(K, \infty]} \big(\widehat{B^{2n}(R_3)}\big)
\arrow{rr} \arrow{d}{\cong}
&  & G_{\ast}^{(K, \infty]} \big(\widehat{B^{2n}(R_1)}\big)
\arrow{d}{\cong}\\
G_{\ast}^{(K, \infty]} \big(\widehat{B^{2n}(R_3)}\big)
\arrow{r} 
& G_{\ast}^{(K, \infty]} \big(\widehat{B^{2n}(R_2)}\big)
\arrow{r}
& G_{\ast}^{(K, \infty]} \big(\psi(\widehat{B^{2n}(R_1)}) \big)
\end{tikzcd}
\]
where the horizontal arrows
are the homomorphisms induced by the inclusions
and the vertical arrows the isomorphisms induced by $\psi$.
But this gives a contradiction,
because for $\ast = 2n$
the horizontal arrow on the top
is an isomorphism from $\mathbb{Z}_2$ to $\mathbb{Z}_2$,
while $G_{\ast}^{(K, \infty]}
\big(\widehat{B^{2n}(R_2)}\big) \cong 0$.

\begin{rmk}\label{remark: category theoretic construction}
In \cite{Traynor}
(and similarly in \cite{San11}
for the generalization to the contact case)
the homology of a domain is defined
by taking the inverse limit
of the homologies of the Hamiltonian symplectomorphisms
supported in the domain,
which are considered to form an inverse system
over the mentioned set of Hamiltonian symplectomorphisms, 
the latter being a directed set
under the partial order $\leq$
defined by setting $\varphi_0 \leq \varphi_1$
if $\varphi_1 \circ \varphi_0^{-1}$
is the time-1 map of the flow
of a compactly supported non-negative Hamiltonian function. 
The property of being an inverse system
relies on having an induced morphism
from the homology of $\varphi_1$
to the homology of $\varphi_0$
whenever $\varphi_0 \leq \varphi_1$. 
As noted in \cite{Fraser - poster}, however,
well-definedness of this morphism
is not addressed in \cite{Traynor} (nor in \cite{San11}).
One point not discussed explicitly
in the homology theories of \cite{Traynor, San11}
is that the homology group
($\mathbb{Z}_2$-vector space)
associated to a Hamiltonian symplectomorphism
is by construction only determined
up to certain isomorphisms
(since generating functions are only unique
up to stabilization and fibre preserving diffeomorphisms);
it is therefore, formally speaking,
a groupoid of vector spaces,
not a vector space.
However, claimed maps between homologies
are defined in \cite{Traynor, San11}
only on specific vector spaces of each groupoid
(arising from specific generating functions)
but are never shown to pass to morphisms
of the associated groupoids.
Moreover, it is a priori not clear whether
the concrete calculations
done with specific vector spaces and morphisms
in \cite{Traynor, San11}
also carry over to the mentioned groupoids.
In the present article
we thus deviate from \cite{Traynor, San11}
and give a more precise construction
that avoids the above concerns 
by taking as underlying directed set
Hamiltonian isotopies
instead of their time-1 maps,
and by using a category theoretic formulation
that does not require the choice of specific generating functions
and that we believe also provides advantages for future work.
The construction is written directly for equivariant homology,
but can be done also in the non-equivariant case
clarifying thus the definition and functorial properties
of the homology groups defined in \cite{Traynor, San11}.
It follows from the analogue of \autoref{proposition: commuting isomorphism}
that the homology $G_{\ast}^{(a, b]} \big(\{\varphi_t\}\big)$
associated via our construction
to a compactly supported Hamiltonian isotopy
$\{\varphi_t\}_{t \in [0, 1]}$ of $(\mathbb{R}^{2n}, \omega_0)$
is isomorphic to the  homology $G_{\ast}^{(a, b]} (\varphi_1)$
in the sense of \cite{Traynor},
and similarly in the contact case
(although these isomorphisms are not canonical,
since they depend on the choice of generating functions).
As we will see,
the advantage of working with isotopies instead of their time-1 maps
is that the homomorphisms involved in the limit constructions
to define the homology of domains become canonical,
and so the functorial properties easier to prove.
\end{rmk}

In order to prove \autoref{theorem: main}
we develop a $\mathbb{Z}_k$-equivariant version
of the generating function homology
of domains of $(\mathbb{R}^{2n}, \omega_0)$
and $(\mathbb{R}^{2n} \times S^1, \xi_0)$.
Given a compactly supported Hamiltonian symplectomorphism
$\varphi$ of $(\mathbb{R}^{2n}, \omega_0)$,
we consider the function
\[
F^{\sharp k}:
\mathbb{R}^{2nk} \times \mathbb{R}^{Nk} \rightarrow \mathbb{R}
\]
obtained by applying a composition formula
due to Allais \cite{Allais}
(\autoref{proposition: composition formula})
to a generating function quadratic at infinity
$F: \mathbb{R}^{2n} \times \mathbb{R}^N
\rightarrow \mathbb{R}$
of $\varphi$.
As we will see,
the function $F^{\sharp k}$
is invariant by the action of $\mathbb{Z}_k$
that cyclically permutes the coordinates,
and its critical points are in 1--1 correspondence
with the $k$-periodic points of $\varphi$,
i.e.\ the fixed points of $\varphi^k$,
with critical values given by the symplectic action.
Moreover,
under this 1--1 correspondence
the $\mathbb{Z}_k$-action
on the set of critical points of $F^{\sharp k}$
corresponds to the $\mathbb{Z}_k$-action
on the set of $k$-periodic points of $\varphi$
generated by the map that sends
a $k$-periodic point $p$
to the $k$-periodic point $\varphi(p)$.
We will also see that, after a change of coordinates,
$F^{\sharp k}$ can be extended to a function
\[
\overline{F^{\sharp k}}:
S^{2n} \times \mathbb{R}^{2n (k-1)} \times \mathbb{R}^{Nk} \rightarrow \mathbb{R}
\]
that is invariant
by the induced $\mathbb{Z}_k$-action
and, if $F$ is special,
quadratic at infinity for the vector bundle
$S^{2n} \times \mathbb{R}^{2n (k-1)} \times \mathbb{R}^{Nk} \rightarrow S^{2n}$.

\begin{figure}[htbp]
\centering

\begin{tikzpicture}[yscale=0.6, xscale=-1]

\def \k {7}
\pgfmathsetmacro\m{\k-1} 
\pgfmathsetmacro\w{2*\k}
\def \radius {2cm}
\def \radiusphi {1.7cm}
\def \Radius {2.7cm}
\def \RadiusO {3.2cm} 
\def \margin {6} 

\foreach \s in {1,2,...,\k}
{
  \node at ({360/\k * (\s )}:\radius) {$\bullet$};
  \draw[->, >=latex] ({360/\k * (\s )+\margin}:\radius) 
    arc ({360/\k * (\s )+\margin}:{360/\k * (\s+1)-\margin}:\radius);
}
\node at ({0}:\RadiusO) {$p=\varphi^{\k}(p)$};

\foreach \s in {1,2,...,\k}
{
  \node at ({360/\k * (\s )+360/\w}:\radiusphi) {$\varphi$};
  
}
\node at ({0}:\RadiusO) {$p=\varphi^{\k}(p)$};

\foreach \s in {1,...,\m}
{
  \node at ({360/\k * (\s )}:\Radius) {$\varphi^{\s}(p)$};
}

\end{tikzpicture}

\caption{If $p$ is a $7$-periodic point of $\varphi$
then all $\varphi^{j}(p)$, $j = 1,\dots,6$,
are $7$-periodic points of $\varphi$.
The group $\mathbb{Z}_7$ acts on the set
$\{\varphi^{j}(p) \;\lvert\; j=0,\dots,6$\}.}

\end{figure}
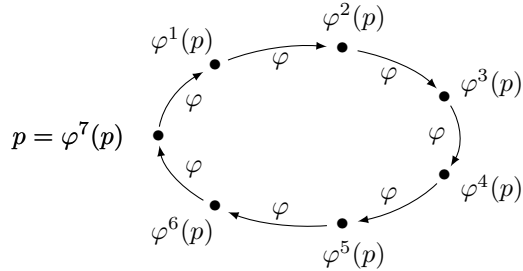

For $a \leq b$ in $\mathbb{R} \cup \{\pm \infty\}$
not belonging to the action spectrum of $\varphi^k$
the equivariant homology
$G_{\mathbb{Z}_k, \ast}^{(a, b]}(F)$
is defined to be the relative $\mathbb{Z}_k$-equivariant homology
(with $\mathbb{Z}_k$-coefficients)
of the sublevel sets of $\overline{F^{\sharp k}}$ at $a$ and $b$.
Although $G_{\mathbb{Z}_k, \ast}^{(a, b]}(F)$
is isomorphic to $G_{\mathbb{Z}_k, \ast}^{(a, b]}(F')$
for any other generating function quadratic at infinity $F'$ of $\varphi$,
as discussed in \autoref{remark: category theoretic construction}
this is not enough to obtain well-defined homology groups for $\varphi$
just by posing
$G_{\mathbb{Z}_k, \ast}^{(a, b]}(\varphi) = G_{\mathbb{Z}_k, \ast}^{(a, b]}(F)$.
In \autoref{section: equivariant symplectic homology}
we thus define the equivariant homology
$G_{\mathbb{Z}_k, \ast}^{(a, b]} \big(\{\varphi_t\}\big)$
of a compactly supported Hamiltonian isotopy
$\{\varphi_t\}_{t\in [0,1]}$ of $(\mathbb{R}^{2n}, \omega_0)$
by a category theoretic construction
that does not require the choice of specific generating functions,
and that ensures that $G_{\mathbb{Z}_k, \ast}^{(a, b]} \big(\{\varphi_t\}\big)$
is isomorphic to $G_{\mathbb{Z}_k, \ast}^{(a, b]} (F)$
for any generating function quadratic at infinity $F$ of $\varphi_1$.
We then define the equivariant generating function homology
$G_{\mathbb{Z}_k, \ast}^{(a, b]}(\mathcal{U})$
of a domain $\mathcal{U}$ of $(\mathbb{R}^{2n}, \omega_0)$
by taking the inverse limit,
in a certain sense,
of the equivariant homologies $G_{\mathbb{Z}_k, \ast}^{(a, b]} \big(\{\varphi_t\}\big)$
over all compactly supported Hamiltonian isotopies $\{\varphi_t\}$
supported in $\mathcal{U}$.

In the contact case
the relevant geometric objects to consider
in order to develop a $\mathbb{Z}_k$-equivariant homology theory
are not the translated points
of $k$-th iterations of contactomorphisms,
on which there is no natural $\mathbb{Z}_k$-action,
but what we call the translated $k$-chains.
A \emph{translated} $k$\emph{-chain}
of (contact) action $tk$
of a contactomorphism $\phi$
of $(\mathbb{R}^{2n + 1}, \xi_0)$
with respect to the contact form $\alpha_0$
is a $k$-tuple of points $(p_1, \dots, p_k)$
such that
\[
g(p_1) + \dots + g (p_k) = 0 \,,
\]
where $g$ denotes the conformal factor of $\phi$,
and
\[
p_{j+1} = \varphi_{-t}^{\alpha_0} \circ \phi \, (p_j)
\]
for all $j$,
with the convention $p_{k+1} = p_1$,
where $\{\varphi_t^{\alpha_0}\}$
denotes the Reeb flow.
In particular,
a translated $1$-chain is a translated point,
and its action coincides
with the action as a translated point.
We say that a $k$-tuple $(p_1, \dots, p_k)$
of points of $\mathbb{R}^{2n} \times S^1$
is a translated $k$-chain of (contact) action $tk$
of a compactly supported contactomorphism $\phi$
of $(\mathbb{R}^{2n} \times S^1, \xi_0)$
contact isotopic to the identity
if there is a $k$-tuple of points of $\mathbb{R}^{2n+1}$
projecting to $(p_1, \dots, p_k)$
that is a translated $k$-chain with action $tk$
of the lift of $\phi$ to $(\mathbb{R}^{2n+1}, \xi_0)$.

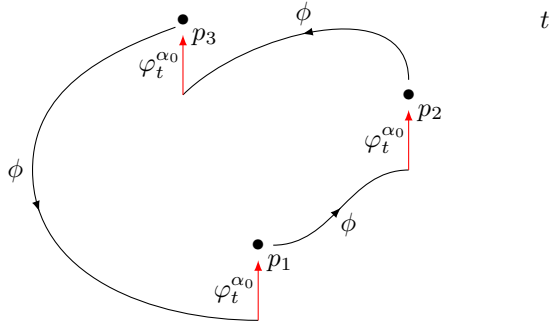
\begin{figure}[htbp]
\centering
\begin{tikzpicture}

\node at (2,1) {$\bullet$};
\node[anchor=north west] at (2,1) {$p_2$};
\draw[middlearrow={latex}]  (1.9,1) to[out=180,in=0]  (0,0);
\draw[->, >=latex,red]  (2,2) to (2,1.1);
\node[anchor=south] at (1.2,0) {$\phi$};
\node[anchor=east] at (2.1,1.5) {$ \varphi_{-t}^{\alpha_0}$};
  
\node at (-1,2) {$\bullet$};
\node[anchor=north west] at (-1,2) {$p_1$};
\draw[middlearrow={latex}]  (-1,2) to[out=55,in=135]  (2,2);
\draw[->, >=latex,red]  (-1,3) to (-1,2.1);
\node[anchor=south] at (0.2,2.7) {$\phi$};
\node[anchor=east] at (-1,2.5) {$ \varphi_{-t}^{\alpha_0}$};

\node at (0,-1) {$\bullet$};
\node[anchor=north west] at (0,-1) {$p_3$};
\draw[middlearrow={latex}]  (-0.1,-1) to[out=180,in=-90] (-3,1) to[out=90,in=190] (-1,3);
\draw[->, >=latex,red]  (0,0) to (0,-0.9);
\node[anchor=east] at (-3,0.2) {$\phi$};
\node[anchor=east] at (0,-0.5) {$ \varphi_{-t}^{\alpha_0}$};

\draw[->, >=latex]  (4,-1) to (4,3);
\node[anchor=east] at (4,3) {$t$};
\end{tikzpicture}

\caption{If the triple $(p_1,p_2,p_3)$
is a translated $3$-chain of $\phi$
then $(p_2, p_3, p_1)$ and $(p_3, p_1, p_2)$
are translated $3$-chains of $\phi$.
The group $\mathbb{Z}_3$
acts on the set
$\{\, (p_1, p_2, p_3) \,,\, (p_2, p_3, p_1)
\,,\, (p_3, p_1, p_2)\,\}$.}

\end{figure}

Given a compactly supported contactomorphism
$\phi$ of $(\mathbb{R}^{2n} \times S^1, \xi_0)$
contact isotopic to the identity
with generating function quadratic at infinity
$F: \mathbb{R}^{2n+1} \times \mathbb{R}^N \rightarrow \mathbb{R}$,
in \autoref{section: invariant generating functions contact}
we define a function
\[
\mathcal{P}_F^{(k)}: \mathbb{R}^{(2n+2)k} \times \mathbb{R}^{Nk} \rightarrow \mathbb{R}
\]
that is invariant by the action of $\mathbb{Z}_k$
that cyclically permutes the coordinates,
and whose critical points
are in 1--1 correspondence with 
the translated $k$-chains of $\phi$,
with critical values given by the contact action,
in such a way that under this 1--1 correspondence
the $\mathbb{Z}_k$-action
on the set of critical points of $\mathcal{P}_F^{(k)}$
corresponds to the $\mathbb{Z}_k$-action
on the set of translated $k$-chains of $\phi$
generated by the map that sends
a translated $k$-chain $(p_1, \dots, p_k)$
to the translated $k$-chain $(p_2, \dots, p_k, p_1)$.
As we will see,
after a change of coordinates
$\mathcal{P}_F^{(k)}$ extends to a continuous function on
$S^{2n} \times \mathbb{R}^2 \times \mathbb{R}^{(2n+2)(k-1)} \times \mathbb{R}^{Nk}$
that descends to a continuous function
\[
\overline{\mathcal{P}_F^{(k)}}:
S^{2n} \times S^1 \times \mathbb{R}^{(2n+2)(k-1)} \times \mathbb{R}^{Nk}
\rightarrow \mathbb{R} \,,
\]
invariant by the induced $\mathbb{Z}_k$-action.
Moreover, if $F$ is special then $\overline{\mathcal{P}_F^{(k)}}$
is smooth and somehow standard at infinity
(in particular, all its critical points
are contained in a compact set).

For $a \leq b$ in $k\mathbb{Z} \cup \{\pm \infty\}$
that are not equal to the action of any translated $k$-chain of $\phi$,
in \autoref{section: equivariant contact homology}
we define $G_{\mathbb{Z}_k, \ast}^{(a, b]}(F)$
to be the relative $\mathbb{Z}_k$-equivariant homology
(with $\mathbb{Z}_k$-coefficients)
of the sublevel sets of $\overline{\mathcal{P}_F^{(k)}}$ at $a$ and $b$.
Similarly to the symplectic case,
we then define the equivariant homology
$G_{\mathbb{Z}_k, \ast}^{(a, b]} \big(\{\phi_t\}\big)$
of a compactly supported contact isotopy
by a category theoretical construction
that does not require the choice of specific generating functions
and that ensures that $G_{\mathbb{Z}_k, \ast}^{(a, b]} \big(\{\phi_t\}\big)$
is isomorphic to $G_{\mathbb{Z}_k, \ast}^{(a, b]}(F)$
for any generating function quadratic at infinity $F$ of $\phi_1$,
and use these equivariant homology groups
to define the equivariant generating function homology
$G_{\mathbb{Z}_k, \ast}^{(a, b]}(\mathcal{V})$
of domains of $(\mathbb{R}^{2n} \times S^1, \xi_0)$.
As we will see,
contact invariance of these groups
essentially follows from the fact that
translated $k$-chains of contact action in $k\mathbb{Z}$
are invariant by conjugation.

In \autoref{section: relation symplectic contact}
we prove that
\[
G_{\mathbb{Z}_k, \ast}^{(a, b]}(\widehat{\mathcal{U}})
\cong G_{\mathbb{Z}_k, \ast}^{(a, b]}(\mathcal{U})
\otimes H_{\ast}(S^1)
\]
for any domain $\mathcal{U}$ of $(\mathbb{R}^{2n}, \omega_0)$.
\autoref{theorem: main}
then follows from the calculation
of the symplectic equivariant homology
of balls in $(\mathbb{R}^{2n}, \omega_0)$.
We prove in \autoref{section: balls}
that if $k$ is prime and $0 < l < k$
then for any $a > 0$ we have
\[
G_{\mathbb{Z}_k, 2nl}^{(a, \infty]} \big(B^{2n}(R)\big)
\cong \begin{cases}
\mathbb{Z}_k \quad &\text{if } a < l \pi R^2 \\
0 \quad &\text{otherwise,}
\end{cases}
\]
and moreover that for $a < l \pi R_2^2 \leq l \pi R_1^2$
the homomorphism
\[
G_{\mathbb{Z}_k, 2nl}^{(a, \infty]} \big(B^{2n}(R_1)\big)
\rightarrow
G_{\mathbb{Z}_k, 2nl}^{(a, \infty]} \big(B^{2n}(R_2)\big)
\]
induced by the inclusion
of $B^{2n}(R_2)$ into $B^{2n}(R_1)$
is an isomorphism.

These results allow us to obtain a proof
of \autoref{theorem: main}.
Indeed, suppose by contradiction
that for $R_1, R_2$ with $1 \leq \pi R_2^2 \leq \pi R_1^2$
there is a contact squeezing of $\widehat{B^{2n}(R_1)}$
into $\widehat{B^{2n}(R_2)}$.
This then induces a contact squeezing
of a neighborhood of $\widehat{B^{2n}(R_1)}$
into $\widehat{B^{2n}(R_2)}$,
so without loss of generality we may assume 
$1 < \pi R_2^2 < \pi R_1^2$.
By the contact isotopy extension theorem
there is a compactly supported contactomorphism
$\psi$ of $(\mathbb{R}^{2n} \times S^1, \xi_0)$
contact isotopic to the identity
such that $\psi \big(\widehat{B^{2n} (R_1)}\big)
\subset \widehat{B^{2n} (R_2)}$.
Take $R_3$ big enough,
so that $\psi \big( \widehat{B^{2n} (R_3)} \big)
= \widehat{B^{2n} (R_3)}$.
Take $k$ prime and $l < k$
so that $\pi R_2^2 \leq \frac{k}{l} < \pi R_1^2$,
and consider the commutative diagram
\label{page diagram}
\[
\begin{tikzcd}
G_{\mathbb{Z}_k, \ast}^{(k, \infty]} \big(\widehat{B^{2n}(R_3)}\big)
\arrow{d}{\cong} \arrow{rr} &
& G_{\mathbb{Z}_k, \ast}^{(k, \infty]}
\big(\widehat{B^{2n}(R_1)}\big)
\arrow{d}{\cong} \\
G_{\mathbb{Z}_k, \ast}^{(k, \infty]} \big(\widehat{B^{2n}(R_3)}\big)
\arrow{r}
& G_{\mathbb{Z}_k, \ast}^{(k, \infty]} \big(\widehat{B^{2n}(R_2)}\big)
\arrow{r}
& G_{\mathbb{Z}_k, \ast}^{(k, \infty]}
\big(\psi(\widehat{B^{2n}(R_1)}) \big)
\end{tikzcd}
\]
where the horizontal arrows
are the homomorphisms induced by the inclusions
and the vertical arrows the isomorphisms induced by $\psi$.
This gives a contradiction,
because for $\ast = 2nl$
the horizontal arrow on the top
is an isomorphism from $\mathbb{Z}_k$ to $\mathbb{Z}_k$,
while $G_{\mathbb{Z}_k, \ast}^{(k, \infty]} \big(\widehat{B^{2n}(R_2)}\big) \cong 0$.

\begin{figure}[htbp]
    \centering
\begin{tikzpicture}

\draw[help lines, gray!30](0,0) grid[step={($(3/2, 1) - (0, 0)$)}] (7.7,4.2);

\draw[->]    (0,0)--(9,0) node[anchor=north east]  {$a$} ;

\node[anchor=north] at (0,0)   {$0$} ;
 \node[anchor=north] at (3/2,0)   {$\pi R^2$} ;
\foreach \i in {2,3,4,5} 
{ \node[anchor=north] at (3*\i/2,0)   {$\i \pi R^2$} ;}

\foreach \i in {1,2,3,4}
{ \node[anchor=east] at (9.5,\i)   {$l={\i} $} ;}

\foreach \i in {1,2,3,4} 
{ \draw[thick]  (0,\i)--({3*\i/2-0.05},\i)     ;}
\foreach \i in {1,2,3,4} 
{ \draw ({3*\i/2},\i) circle (0.05);}

\foreach \i in {1,2,3,4}
{ \node[anchor=south] at ({3*\i/2-3/4},\i)   {$\mathbb{Z}_5$} ;}

\end{tikzpicture}

\caption{The homology groups $G_{\mathbb{Z}_5,2nl}^{(a,\infty]}(B^{2n}(R))$ for $l<5$.}
\label{fig: equivariant barcode}
\end{figure}

Note that this argument
applies only when $1 < \pi R_1^2$
due to the condition $l < k$
in the stated results on equivariant homology of balls.
As we will see,
these results are obtained
by considering a certain sequence of Hamiltonian functions
supported in $B^{2n} (R)$
(the Hamiltonian function of a rotation
composed with suitable cut-off functions,
as in \cite{Traynor, San11}),
and using the fact that,
for the time-1 map of the flow of these Hamiltonian functions,
the spaces of $k$-periodic points
corresponding to the critical values $l \pi R^2$
are diffeomorphic to spheres $S^{2n-1}$
and the $\mathbb{Z}_k$-action on them
is given by $z \mapsto e^{-i \frac{2\pi l}{k}} z$;
in particular the action is free for $l < k$,
but not free for $l = k$.

In \cite{Zhang}
the equivariant symplectic and contact homologies of domains
of $(\mathbb{R}^{2n}, \omega_0)$
and $(\mathbb{R}^{2n} \times S^1, \xi_0)$
defined in \cite{Chiu}
with the microlocal theory of sheaves
are used to define
sequences $(c_j)_{j \in \mathbb{Z}_{\geq 1}}$
of symplectic and contact capacities.
Moreover,
a construction of capacities for lens spaces,
relevant also in the present context,
is sketched
in \cite{Fraser - persistence}.
The possibility of using
the equivariant theory introduced in the present article
to define sequences of capacities
generalizing the symplectic and contact capacities
defined in \cite{Viterbo} and \cite{San11} respectively,
as well as the properties and applications
of such capacities,
will be explored in forthcoming works.

The article is organized as follows.
In \autoref{section: generating functions}
we give some preliminaries on generating functions.
In \autoref{section: invariant generating functions symplectic}
we use a composition formula due to Allais \cite{Allais}
to define $\mathbb{Z}_k$-invariant functions
detecting $k$-periodic points of Hamiltonian symplectomorphisms
of $(\mathbb{R}^{2n}, \omega_0)$,
and in \autoref{section: equivariant symplectic homology}
we use these functions to define
the $\mathbb{Z}_k$-equivariant homology
of domains of $(\mathbb{R}^{2n}, \omega_0)$.
In \autoref{section: invariant generating functions contact}
we introduce a contact version
of the composition formula of
\autoref{section: invariant generating functions symplectic}
to define $\mathbb{Z}_k$-invariant functions
detecting translated $k$-chains
of contactomorphisms of $(\mathbb{R}^{2n} \times S^1, \xi_0)$,
and in \autoref{section: equivariant contact homology}
we use these functions to define
the $\mathbb{Z}_k$-equivariant homology
of domains of $(\mathbb{R}^{2n} \times S^1, \xi_0)$.
In \autoref{section: relation symplectic contact}
we discuss the relation
between the $\mathbb{Z}_k$-equivariant homology
of a domain of $(\mathbb{R}^{2n}, \omega_0)$
and the $\mathbb{Z}_k$-equivariant homology
of its prequantization,
and in \autoref{section: balls}
we calculate the $\mathbb{Z}_k$-equivariant homology of balls,
concluding the proof of \autoref{theorem: main}.

\subsection*{Acknowledgments} 
The second author addresses a special thanks
to Miguel Abreu for his constant support and advice.
She also thanks Simon Allais for helpful discussions,
and more specifically for correcting a mistake in \autoref{proposition: index}
and suggesting an easier proof of \autoref{proposition: calculation phi equivariant}
in the revised version.
The third author warmly thanks Shaoyun Bai, Sylvain Courte, St\'ephane Guillermou, Wenyuan Li and Vivek Shende for helpful discussions.
Some of the work was conducted when the third author visited IRMA (UMR 7501) at the Université de Strasbourg during the winter of 2021.
Additionally, the third author participated in the conference
{\em From smooth to $\mathcal{C}^0$ symplectic geometry: topological aspects and dynamical implications} in the summer of 2023,
which took place at CIRM, where further work was completed.
The third author hereby thanks Université de Strasbourg, CIRM, and the event organizers for their hospitality during the visits.
We thank the referees for useful comments and corrections.

The first author was supported by the Natural Sciences and Engineering Research Council of Canada (NSERC RGPIN-2017-06901).
The second and third authors were supported
by the ANR project COSY (ANR-21-CE40-0002).
The third author was also supported
by the Novo Nordisk Foundation grant NNF20OC0066298
and the VILLUM FONDEN, VILLUM Investigator grant 37814.


\section{Generating functions}\label{section: generating functions}

In this section we gather the results on generating functions
that are needed in the rest of the article.
As explained in the introduction
(\autoref{remark: category theoretic construction}),
in contrast to \cite{Traynor, San11}
where the generating function homology of domains
is constructed by just using generating functions
associated to single Hamiltonian symplectomorphisms and contactomorphisms,
in the present article we work
with Hamiltonian and contact isotopies.
We therefore need to add
Propositions \ref{proposition: Serre fibration}
and \ref{proposition: Serre fibration contact}
and Lemmas \ref{lemma: lemmas gf}, \ref{lemma: lemmas gf contact},
\ref{lemma: special} and \ref{lemma: monotonicity special}
to the material presented in \cite{Traynor, San11}.
For the rest we follow \cite{Traynor, San11},
to which we refer for more details.

A function $F: E \rightarrow \mathbb{R}$
defined on the total space $E = B \times \mathbb{R}^N$
of a trivial vector bundle
$\pi: B \times \mathbb{R}^N \rightarrow B$
is a \emph{generating function}
if the differential $dF: E \rightarrow T^{\ast}E$
is transverse to the fibre conormal bundle
\[
N_E^{\ast} =
\big\{\, \sigma_e \in T^{\ast}E \;\big\lvert\;
\sigma_e = 0 \text{ on } \ker d \pi (e) \,\big\} \,.
\]
The space
\[
\Sigma_F = dF^{-1} \big( N_E^{\ast} \cap \im (dF) \big)
\]
of fibre critical points
is then a submanifold of $E$,
of dimension equal to the dimension of $B$.
Consider the map $i_F: \Sigma_F \rightarrow T^{\ast}B$
that associates to $e \in \Sigma_F$
the covector $i_F(e) \in T_{\pi(e)}^{\ast}B$
defined by
\[
i_F(e) (X) = dF (\widehat{X})
\]
for $X \in T_{\pi(e)}B$,
where $\widehat{X}$ is any vector in $T_eE$
with $d \pi (\widehat{X}) = X$.
Equip $T^{\ast}B$ with its canonical symplectic structure
$\omega_{\can} = d \lambda_{\can}$.
Then $i_F: \Sigma_F \rightarrow T^{\ast}B$
is an exact Lagrangian immersion,
with
\begin{equation}\label{equation: iF exact}
i_F^{\,\ast} \, \lambda_{\can} = d ( \left. F \right\lvert_{\Sigma_F} ) \,.
\end{equation}
If $i_F: \Sigma_F \rightarrow T^{\ast}B$ is an embedding
we say that $F$ is a generating function
of the Lagrangian submanifold $\im (i_F)$
of $(T^{\ast}B, \omega_{\can})$.
The map $i_F$ then induces a bijection
between the critical points of $F$
and the intersections of $\im (i_F)$ with the zero section.

Suppose now that $B$ is compact.
A generating function
$F: E = B \times \mathbb{R}^N \rightarrow \mathbb{R}$
is said to be \emph{quadratic at infinity}
if there exists a non-degenerate quadratic form $F_{\infty}$ on $E$,
i.e.\ a map $F_{\infty}: E \rightarrow \mathbb{R}$
whose restriction to every fibre
is a non-degenerate quadratic form,
such that
$\partial_v (F - F_{\infty}): E \rightarrow E^{\ast}$
is bounded,
where $\partial_v$ denotes the vertical derivative.
Every Lagrangian submanifold of $(T^{\ast}B, \omega_{\can})$
Hamiltonian isotopic to the zero section
has a generating function quadratic at infinity
\cite{Sikorav87},
which is unique up to addition of a constant,
fibre preserving diffeomorphism
and stabilization,
i.e.\ replacing $F: B \times \mathbb{R}^N \rightarrow \mathbb{R}$
by $F \oplus Q: B \times \mathbb{R}^N \times \mathbb{R}^{N'} \rightarrow \mathbb{R}$
for a non-degenerate quadratic form $Q$ on $\mathbb{R}^{N'}$
\cite{Viterbo, Theret}.
The existence theorem is a special case
of the following result \cite{Sikorav87}:
if $L$ is a Lagrangian submanifold
of $(T^{\ast}B, \omega_{\can})$
that has a generating function quadratic at infinity $F$
then for any Hamiltonian isotopy $\{\varphi_t\}_{t \in [0, 1]}$
of $(T^{\ast}B, \omega_{\can})$
there is a 1-parameter family $F_t$
of generating functions quadratic at infinity
for $\{\varphi_t(L)\}$
such that $F_0$ is a stabilization of $F$.
In turn,
this implies the following
more general result \cite{Theret}.

\begin{prop}\label{proposition: Serre fibration}
Let $\mathcal{F}$ be the space
of generating functions quadratic at infinity
over a compact manifold $B$,
and $\Lag$ the space of Lagrangian submanifolds
of $(T^{\ast}B, \omega_{\can})$.
Then the map $\mathcal{F} \rightarrow \Lag$
that sends a generating function
to the generated Lagrangian submanifold
is a Serre fibration up to stabilization:
given a map $f: \Delta_d \rightarrow \Lag$,
where $\Delta_d$ denotes the standard $d$-simplex,
a lift $F: \Delta_d \rightarrow \mathcal{F}$
and a homotopy $f_t: \Delta_d \rightarrow \Lag$,
$t \in [0, 1]$, of $f_0 = f$,
there is a homotopy $F_t: \Delta_d \rightarrow \mathcal{F}$
lifting $f_t$ such that $F_0$ is a stabilization of $F$.
\end{prop}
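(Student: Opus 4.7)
The plan is to deduce the parametric statement from the one-parameter Chaperon--Sikorav--Th\'eret theorem recalled just before, by running the lift with parameters in $\Delta_n$.

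First, I would realize the homotopy $f_t : \Delta_n \to \Lag$ by an ambient parametric Hamiltonian isotopy. For each $s \in \Delta_n$ the path $t \mapsto f_t(s)$ is a path of Lagrangians all of which are Hamiltonian isotopic to the zero section, hence Hamiltonian isotopic to one another. A parametric version of Weinstein's Lagrangian neighborhood theorem, combined with the standard extension argument (pick a primitive of the time-derivative $1$-form of $f_t(s)$ inside a Weinstein neighborhood, then extend it to a compactly supported function on $T^{\ast}B$ depending continuously on $s$), produces a continuous family of compactly supported Hamiltonians $H_t^s : T^{\ast}B \to \mathbb{R}$ whose flows $\varphi_t^s$ satisfy $\varphi_0^s = \id$ and $\varphi_t^s\bigl(f_0(s)\bigr) = f_t(s)$ for all $(s,t) \in \Delta_n \times [0,1]$.

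Next, I would run the Chaperon--Sikorav composition construction in families. Choose a subdivision $0 = t_0 < t_1 < \cdots < t_N = 1$ fine enough that each elementary symplectomorphism $\varphi_{t_{i+1}}^s \circ (\varphi_{t_i}^s)^{-1}$ is $C^1$-close to the identity, uniformly in $s \in \Delta_n$; this is possible because $\Delta_n$ is compact and $H_t^s$ is continuous. Each such step then admits a standard graph-type generating function depending continuously on $s$, and inserting one extra pair of fibre variables per step produces a generating function quadratic at infinity for $\varphi_t^s\bigl(f_0(s)\bigr) = f_t(s)$, built on top of $F(s)$. This defines a continuous homotopy $F_t : \Delta_n \to \mathcal{F}$ with $\pi \circ F_t = f_t$. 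At $t = 0$ every elementary flow is the identity, so the construction only adjoins a nondegenerate quadratic form in the auxiliary variables, with the same number of variables and the same quadratic form for every $s$; hence $F_0(s)$ is a stabilization of $F(s)$ in the required sense, independent of $s \in \Delta_n$.

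The main obstacle is the first step: producing the parametric Hamiltonian isotopy realizing the given parametric homotopy of Lagrangians, since this demands a parametric Weinstein-type neighborhood on which the $f_t(s)$ are simultaneously graphs of exact one-forms. Once this is in place, the rest is essentially formal: the Chaperon--Sikorav composition depends continuously on its inputs (the generating function of the starting Lagrangian and the intermediate Hamiltonian flow), and uniformity of the number of auxiliary variables on the compact simplex $\Delta_n$ is automatic, so continuous dependence on $s$ transfers to the lift $F_t$ without further work.
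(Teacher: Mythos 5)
Your argument is correct and is essentially the proof the paper intends: the paper does not prove this proposition itself but deduces it from the parametric Sikorav--Chaperon construction via the citation to Th\'eret, which is exactly the two-step scheme you describe (realize the Lagrangian homotopy by a $\Delta_n$-parametric compactly supported Hamiltonian isotopy, then run the broken-geodesic composition with a number of steps uniform over the compact simplex, so that at $t=0$ only a fixed nondegenerate quadratic form is adjoined). The one point you flag as the main obstacle is indeed where the exactness of the Lagrangians in the image of $\pi$ enters, since it guarantees the parametric Lagrangian isotopy has vanishing flux and hence extends to an ambient Hamiltonian isotopy.
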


Fix a point $p$ of $B$,
and denote by $0_p$ the corresponding point of the zero section of $T^{\ast}B$.
We say that a generating function $F$
of a Lagrangian submanifold $L$ of $(T^{\ast}B, \omega_{\can})$
with $0_p \in L$ is normalized (with respect to $p$)
if $F \big( i_F^{\,-1} (0_p) \big) = 0$.

\begin{lemma}\label{lemma: lemmas gf}
Let $B$ be a compact manifold,
and fix a point $p$ of $B$.
Then we have the following results:
\begin{itemize}
\item[(i)]
Let $L$ be a Lagrangian submanifold of $(T^{\ast}B, \omega_{\can})$
with $0_p \in L$.
If $F^{(s)}$, $s \in [0, 1]$, is a 1-parameter family
of normalized generating functions quadratic at infinity of $L$
then there is a 1-parameter family $\Phi_s$
of fibre preserving diffeomorphisms
with $\Phi_0 = \id$
and $F^{(s)} \circ \Phi_s = F^{(0)}$.
\item[(ii)]
Let $\{L_t\}_{t \in [0, 1]}$
be a Lagrangian isotopy 
in $(T^{\ast}B, \omega_{\can})$ with $0_p \in L_t$ for all $t$.
Suppose that $F^{(s)}_t$, $s \in [0, 1]$,
is a 2-parameter family
of normalized generating functions quadratic at infinity
such that every $F^{(s)}_t$
is a generating function of $L_t$.
Then there is a 2-parameter family $\Phi_{s, t}$
of fibre preserving diffeomorphisms
with $\Phi_{0, t} = \id$
and $F^{(s)}_t \circ \Phi_{s, t} = F^{(0)}_t$.

\item[(iii)]
Let $\{L_t\}_{t \in [0, 1]}$ be a Lagrangian isotopy 
in $(T^{\ast}B, \omega_{\can})$ with $0_p \in L_t$ for all $t$.
If $F_t^{(0)}$ and $F_t^{(1)}$ are 1-parameter families
of normalized generating functions quadratic at infinity
for $\{L_t\}$ with $F_0^{(0)} = F_0^{(1)}$
then there are a non-degenerate quadratic form $Q$
and a 2-parameter family $\Phi_{s, t}$
of fibre preserving diffeomorphisms
such that $\Phi_{0, t} = \id$,
$(F_t^{(1)} \oplus Q) \circ \Phi_{1, t} = F_t^{(0)} \oplus \,Q$,
and $(F_0^{(0)} \oplus \,Q) \circ \Phi_{s, 0}^{\,-1}$ is a contractible loop.
Similarly for 2-parameter families
$F_{u, t}^{(0)}$ and $F_{u, t}^{(1)}$
of normalized generating functions quadratic at infinity
for the same 2-parameter family $\{L_{u, t}\}$
of Lagrangian submanifolds with $0_p \in L_{u, t}$
for all $u, t$.
\end{itemize}
\end{lemma}

\begin{proof}
Point (i) is one of the ingredients
of the uniqueness theorem of \cite{Viterbo, Theret}.
Point (ii) is a 1-parameter version of (i),
which can be proved by the same argument
(see also \cite[Lemma 2.16]{lens},
where the proof is given
in the context of conical generating functions).
Point (iii) can be seen as follows.
For every $t$,
consider the path of Lagrangian submanifolds
(which is actually a loop)
obtained by going from $L_t$ to $L_0$
along $\{L_t\}$
and then back to $L_t$.
This 1-parameter family $\gamma$ of paths
has a lift $\Gamma$
to the space of generating functions $\mathcal{F}$:
for every $t$ we consider the path in $\mathcal{F}$
that goes from $F_t^{(0)}$ to $F_0^{(0)}$
and then from $F_0^{(0)} = F_0^{(1)}$ to $F_t^{(1)}$.
By \autoref{proposition: Serre fibration}
the homotopy from the 1-parameter family of loops $\gamma$
to the constant 1-parameter family of loops along $\{L_t\}$
(obtained by contracting for every $t$
the loop based at $L_t$ to the constant one)
can be lifted to a homotopy in $\mathcal{F}$
starting at a stabilization $\Gamma \oplus Q$ of $\Gamma$.
In particular,
we obtain a 2-parameter family
of generating functions quadratic at infinity $G_t^{(s)}$
from $G_t^{(0)} = F_t^{(0)} \oplus Q$ to $G_t^{(1)} = F_t^{(1)} \oplus Q$
such that $G_0^{(s)}$ is a contractible loop
and every $G_t^{(s)}$ is a generating function of $L_t$.
By hypothesis, $F_t^{(0)}$ and $F_t^{(1)}$,
hence $G_t^{(0)}$ and $G_t^{(1)}$,
are normalized for all $t$.
After adding a constant $c = c(s, t)$ (with $c(0, t) = c(1, t) = 0$),
we can thus assume that all the $G_t^{(s)}$ are normalized.
We then apply (ii) to obtain a 2-parameter family
of fibre preserving diffeomorphisms $\Phi_{s, t}$
with $\Phi_{0, t} = \id$
and $G_t^{(s)} \circ \Phi_{s, t} = G_t^{(0)}$,
in particular $(F_t^{(1)} \oplus Q) \circ \Phi_{1, t} = F_t^{(0)} \oplus Q$
and $(F_0^{(0)} \oplus Q) \circ \Phi_{s, 0}^{\,-1} = G_0^{(s)}$ is a contractible loop.
The 2-parameter case can be proved similarly.
\end{proof}

Consider now the standard symplectic Euclidean space
$( \mathbb{R}^{2n} ,
\omega_0 = \sum_{j=1}^n dx_j \wedge dy_j)$,
and the product
\[
\big( \mathbb{R}^{2n} \times \mathbb{R}^{2n} \,,\,
- \omega_0 \oplus \omega_0\big) \,.
\]
The map
\[
\tau: \mathbb{R}^{2n}\times\mathbb{R}^{2n} \rightarrow T^{\ast}\mathbb{R}^{2n} \,,\;
\tau(x, y, X, Y) = \Big(\frac{x+X}{2}, \frac{y+Y}{2}, y - Y, X - x \Big)
\]
is a symplectomorphism,
and sends the diagonal to the zero section.
For a symplectomorphism $\varphi$
of $(\mathbb{R}^{2n} , \omega_0)$
we denote by
\[
\Gamma_{\varphi}:
\mathbb{R}^{2n} \rightarrow T^{\ast}\mathbb{R}^{2n}
\]
the composition of the graph
\[
\gr (\varphi):\mathbb{R}^{2n} \rightarrow
\mathbb{R}^{2n} \times \mathbb{R}^{2n} \,,\;
p \mapsto \big( p, \varphi(p) \big)
\]
with $\tau$.

\begin{rmk}\label{remark: varphi exact}
Let $\lambda_0 = \sum_{j = 1}^n \frac{x_j dy_j - y_j dx_j}{2}$.
If $\varphi^{\ast} \lambda_0 - \lambda_0 = dS$
then
\[
\Gamma_{\varphi}^{\;\ast} \, \lambda_{\can}
= d \Big( S + \frac{y \, \varphi_x - x \,\varphi_y}{2} \Big) \,,
\]
where we denote
$\varphi (x, y) = \big( \varphi_x (x, y), \varphi_y (x, y) \big)$.
\end{rmk}

We say that $F$ is a generating function
of a symplectomorphism $\varphi$
of $(\mathbb{R}^{2n}, \omega_0)$
if it is a generating function
of the Lagrangian submanifold $L_{\varphi} = \im (\Gamma_{\varphi})$
of $(T^{\ast}\mathbb{R}^{2n}, \omega_{\can})$.
Then $i_F: \Sigma_F \rightarrow T^{\ast}\mathbb{R}^{2n}$
gives a diffeomorphism between $\Sigma_F$
and $L_{\varphi}$,
and the composition
\begin{equation}\label{equation: diffeomorphism gf symplectic}
\begin{tikzcd}
{\Phi_F:\,\mathbb{R}^{2n}} \arrow[r, "\Gamma_{\varphi}"] & L_{\varphi} \arrow[r] &[3ex] \Sigma_F 
\end{tikzcd}
\end{equation}
of the inverse of this diffeomorphism with $\Gamma_{\varphi}$
is a diffeomorphism
that induces a bijection
between the fixed points of $\varphi$
and the critical points of $F$.
The next proposition
follows directly from \eqref{equation: iF exact}
and \autoref{remark: varphi exact}.

\begin{prop}\label{proposition: gf symplectic}
If $F$ is a generating function
of a symplectomorphism $\varphi$ of $(\mathbb{R}^{2n}, \omega_0)$
and if $\varphi^{\ast} \lambda_0 - \lambda_0 = dS$
then
\[
d (F \circ \Phi_F)
= d \Big( S + \frac{y \, \varphi_x - x \,\varphi_y}{2} \Big) \,.
\]
\end{prop}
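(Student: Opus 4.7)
The plan is to combine the two exactness statements already available: equation \eqref{equation: iF exact}, which says $i_F^{\ast}\lambda_{\can} = d(F|_{\Sigma_F})$ for the Lagrangian immersion $i_F:\Sigma_F \to T^{\ast}\mathbb{R}^{2n}$ associated to a generating function, and \autoref{remark: varphi exact}, which says $\Gamma_{\varphi}^{\ast}\lambda_{\can} = d\bigl(S + \tfrac{x\,\varphi_y - y\,\varphi_x}{2}\bigr)$ whenever $\varphi$ is exact with primitive $S$.

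The connective tissue is the factorization of $\Gamma_{\varphi}$ through the fibre critical set. By the definition \eqref{equation: diffeomorphism gf symplectic} of $\Phi_F$, the map $i_F:\Sigma_F \to \im(\Gamma_{\varphi})$ is a diffeomorphism, $\Phi_F$ is its inverse composed with $\Gamma_{\varphi}$, and consequently
\[
\Gamma_{\varphi} = i_F \circ \Phi_F
\]
as maps $\mathbb{R}^{2n} \to T^{\ast}\mathbb{R}^{2n}$. Pulling back $\lambda_{\can}$ along this equality and applying \eqref{equation: iF exact} gives
\[
\Gamma_{\varphi}^{\ast}\lambda_{\can} = \Phi_F^{\ast}\bigl(i_F^{\ast}\lambda_{\can}\bigr) = \Phi_F^{\ast} d(F|_{\Sigma_F}) = d(F \circ \Phi_F),
\]
where the last equality uses that $\Phi_F$ takes values in $\Sigma_F$, so that $F|_{\Sigma_F} \circ \Phi_F = F \circ \Phi_F$, and that pullback commutes with the exterior derivative. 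Then invoking \autoref{remark: varphi exact} to identify the left-hand side as $d\bigl(S + \tfrac{x\,\varphi_y - y\,\varphi_x}{2}\bigr)$ yields the desired identity.

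There is essentially no obstacle here; this is a bookkeeping argument that simply assembles two facts proved earlier in the section. The only point to be careful about is ensuring that the composition $i_F\circ\Phi_F$ is literally $\Gamma_{\varphi}$ (not just equal up to the inclusion $\im(\Gamma_{\varphi})\hookrightarrow T^{\ast}\mathbb{R}^{2n}$), which is immediate from the construction in \eqref{equation: diffeomorphism gf symplectic}.
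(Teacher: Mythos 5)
Your proof is correct and is exactly the argument the paper intends: the paper gives no explicit proof, stating only that the proposition ``follows directly from \eqref{equation: iF exact} and \autoref{remark: varphi exact}'', and your factorization $\Gamma_{\varphi} = i_F \circ \Phi_F$ together with the pullback computation is precisely how those two facts combine.
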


We say that a generating function
$F: \mathbb{R}^{2n} \times \mathbb{R}^N \rightarrow \mathbb{R}$
of a compactly supported symplectomorphism $\varphi$
of $(\mathbb{R}^{2n} , \omega_0)$
is normalized if it is zero at the critical points
corresponding to the fixed points outside the support of $\varphi$.
If $\varphi$ is a compactly supported Hamiltonian symplectomorphism,
the critical values of such generating function
are then equal to the symplectic action
of the fixed points of $\varphi$.
Recall that the symplectic action
of a fixed point $p$
is defined by
\[
\mathcal{A}_{\varphi}(p)
= \int_0^1 \big( \lambda_0 (X_t) + H_t \big)
\big( \varphi_t(p) \big) \, dt \,,
\]
where $\{ \varphi_t \}_{t \in [0,1]}$
is any compactly supported Hamiltonian isotopy with $\varphi_1 = \varphi$,
$X_t$ is the vector field generating this isotopy,
and $H_t$ is the associated compactly supported Hamiltonian function,
with the sign convention
$\iota_{X_t}\omega_0 = dH_t$.
Recall also that $\mathcal{A}_{\varphi}(p) = S (p)$,
where $S: \mathbb{R}^{2n} \rightarrow \mathbb{R}$
is the compactly supported function satisfying
$\varphi^{\ast} \lambda_0 - \lambda_0 = dS$.

\begin{prop}\label{proposition: critical values gf action}
Let $F: \mathbb{R}^{2n} \times \mathbb{R}^N \rightarrow \mathbb{R}$
be a normalized generating function
of a compactly supported Hamiltonian symplectomorphism $\varphi$
of $(\mathbb{R}^{2n}, \omega_0)$,
and let $S: \mathbb{R}^{2n} \rightarrow \mathbb{R}$
be the compactly supported function satisfying
$\varphi^{\ast} \lambda_0 - \lambda_0 = dS$.
Let
\[
\Sigma_F \rightarrow \mathbb{R}^{2n} \,,\;
(x, y, \zeta) \mapsto (\overline{x}, \overline{y})
\]
be the inverse of the diffeomorphism
\eqref{equation: diffeomorphism gf symplectic}.
For every $(x, y, \zeta) \in \Sigma_F$
we then have
\[
F (x, y, \zeta) = S (\overline{x}, \overline{y})
+ \frac{\overline{y} \varphi_x (\overline{x}, \overline{y})
- \overline{x} \varphi_y (\overline{x}, \overline{y})}{2} \,.
\]
If moreover $(x, y, \zeta)$ is a critical point of $F$
then
\[
F (x, y, \zeta) = S (\overline{x}, \overline{y})
= \mathcal{A}_{\varphi} (\overline{x}, \overline{y}) \,.
\]
\end{prop}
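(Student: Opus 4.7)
The plan is to derive the first identity by integrating the differential equality from Proposition \ref{proposition: gf symplectic} and fixing the constant of integration using the normalization, then to read off the second identity as a specialization at critical points.

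By Proposition \ref{proposition: gf symplectic}, the two functions $F \circ \Phi_F$ and $S + \frac{x \varphi_y - y \varphi_x}{2}$ on $\mathbb{R}^{2n}$ have the same exterior derivative, hence differ by a constant $c$. To pin $c$ down, I would evaluate both sides at any point $(\overline{x}, \overline{y})$ lying outside $\mathrm{supp}(\varphi)$. There $\varphi$ acts as the identity, so $\varphi_x(\overline{x}, \overline{y}) = \overline{x}$ and $\varphi_y(\overline{x}, \overline{y}) = \overline{y}$, which makes the symmetric correction $\overline{x}\varphi_y - \overline{y}\varphi_x$ vanish; and since $S$ is compactly supported, $S(\overline{x}, \overline{y}) = 0$ as well. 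Such a point is a fixed point of $\varphi$ outside its support, hence corresponds via $\Phi_F$ to one of the critical points of $F$ that the normalization requires to have zero value (those arising from the fibre over infinity once we pass to the compactification $S^{2n}$). Consequently $(F \circ \Phi_F)(\overline{x}, \overline{y}) = 0$, forcing $c = 0$ and establishing the first formula on all of $\mathbb{R}^{2n}$.

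For the second formula, suppose $(x, y, \zeta)$ is an honest critical point of $F$. Then $(\overline{x}, \overline{y}) = \Phi_F^{-1}(x, y, \zeta)$ is a fixed point of $\varphi$, so, exactly as above, $\varphi_x(\overline{x}, \overline{y}) = \overline{x}$ and $\varphi_y(\overline{x}, \overline{y}) = \overline{y}$, the correction term vanishes, and we are left with $F(x, y, \zeta) = S(\overline{x}, \overline{y})$. The remaining identity $S(\overline{x}, \overline{y}) = \mathcal{A}_\varphi(\overline{x}, \overline{y})$ is the standard action formula already recalled immediately before the statement: differentiating $\varphi_t^{*} \lambda_0 - \lambda_0 = dS_t$ in $t$ and applying Cartan's formula together with $\iota_{X_t}\omega_0 = dH_t$ shows that $\dot{S}_t = (\lambda_0(X_t) + H_t) \circ \varphi_t$ up to an additive constant killed by compact support, and integration in $t$ from $0$ to $1$ recovers the defining integral of $\mathcal{A}_\varphi$.

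The only genuinely delicate step is the constant-fixing argument, which rests on interpreting the phrase ``critical points corresponding to the point at infinity of $S^{2n}$'' as equivalent to ``critical points of $F$ whose image under $\Phi_F^{-1}$ lies outside $\mathrm{supp}(\varphi)$''. This identification follows from the quadratic-at-infinity structure of $F$, which forces $\Sigma_F$ to map to the zero section of $T^{*}\mathbb{R}^{2n}$ outside a compact set of the base, so any such critical point really does sit above a fixed point of the identity extension of $\varphi$ to $S^{2n}$. Once this is granted, the rest of the argument reduces to elementary substitution.
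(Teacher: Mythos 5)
Your proposal is correct and follows essentially the same route as the paper: integrate the differential identity of \autoref{proposition: gf symplectic}, fix the additive constant using the normalization of $F$ together with the compact support of $S$ (and of $\varphi$), and then observe that the correction term $\frac{\overline{x}\varphi_y - \overline{y}\varphi_x}{2}$ vanishes at fixed points. You merely spell out details the paper leaves implicit, such as the connectedness/locally-constant argument pinning down the constant and the verification that $S = \mathcal{A}_\varphi$.
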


\begin{proof}
Since $S$ is compactly supported
and $F$ is normalized,
the first statement follows
from \autoref{proposition: gf symplectic}.
For the second statement,
observe that if $(x, y, \zeta)$ is a critical point of $F$
then $(\overline{x}, \overline{y})$
is a fixed point of $\varphi$,
and so $\overline{y} \varphi_x (\overline{x}, \overline{y})
- \overline{x} \varphi_y (\overline{x}, \overline{y}) = 0$.
\end{proof}

If $\varphi$ is a Hamiltonian symplectomorphism
of $(\mathbb{R}^{2n} , \omega_0)$
then the Lagrangian submanifold $L_{\varphi}$
of $( T^{\ast}\mathbb{R}^{2n}, \omega_{\can})$
is Hamiltonian isotopic to the zero section.
If moreover $\varphi$
is a compactly supported Hamiltonian symplectomorphism then,
by considering the 1-point compactification $S^{2n}$ of $\mathbb{R}^{2n}$,
$L_{\varphi}$ extends to a Lagrangian submanifold $\overline{L_{\varphi}}$
of $( T^{\ast}S^{2n}, \omega_{\can})$,
Hamiltonian isotopic to the zero section.
We identify $\mathbb{R}^{2n}$
with $S^{2n} \smallsetminus \{p_{\infty}\}$
by the stereographic projection,
where $p_{\infty}$ denotes the point at infinity
when seeing $S^{2n}$
as the 1-point compactification of $\mathbb{R}^{2n}$.
If $\overline{F}: S^{2n} \times \mathbb{R}^N \rightarrow \mathbb{R}$
is a generating function of $\overline{L_{\varphi}}$
then the induced function
$F: \mathbb{R}^{2n} \times \mathbb{R}^N \rightarrow \mathbb{R}$
is a generating function of $L_{\varphi}$.
If $\overline{F}$ is quadratic at infinity
then we say that $F$
is a generating function quadratic at infinity of $\varphi$,
and denote by $F_{\infty}$
the quadratic form on $\mathbb{R}^{2n} \times \mathbb{R}^N$
induced by $\overline{F}_{\infty}$.
Notice that $F$ is normalized if and only if
$\overline{F}$ is normalized with respect to $p_{\infty}$.

Let now $J^1B = T^{\ast}B \times \mathbb{R}$
be the 1-jet bundle of a manifold $B$,
endowed with its canonical contact structure
$\xi_{\can} = \ker (d\theta - \lambda_{\can})$,
where $\theta$ denotes the coordinate in $\mathbb{R}$. 
If $F: E \rightarrow \mathbb{R}$,
defined on the total space $E = B \times \mathbb{R}^N$
of a trivial vector bundle $\pi: B \times \mathbb{R}^N \rightarrow B$,
is a generating function
then the map
\[
\widetilde{i_F}: \Sigma_F \rightarrow J^1B \,,\;
\widetilde{i_F} (e) = \big( i_F(e) \,,\, F(e) \big)
\]
is a Legendrian immersion.
If it is an embedding
we say that $F$ is a generating function
of the Legendrian submanifold $\im (\widetilde{i_F})$
of $(J^1B, \xi_{\can})$.
The map $\widetilde{i_F}$ then induces a bijection
between the critical points of $F$
and the Reeb chords
from the zero section to $\im (\widetilde{i_F})$:
a fibre critical point $e$ of $F$
is a critical point
if and only if $\widetilde{i_F} (e) = \big( i_F(e) , F(e) \big)$
belongs to the zero wall of $J^1B$
(the product of the zero section of $T^{\ast}B$ with $\mathbb{R}$),
and so $\widetilde{i_F} (e)$ belongs to the same Reeb orbit
as the point $\big( i_F(e), 0 \big)$
of the zero section;
in this case moreover the length of the Reeb chord
from $\big( i_F(e), 0 \big)$ to $\widetilde{i_F} (e)$
is equal to the critical value $F(e)$.

If $B$ is compact
then every Legendrian submanifold of $(J^1B, \xi_{\can})$
contact isotopic to the zero section
has a generating function quadratic at infinity
\cite{Chaperon, Chekanov},
unique up to fibre preserving diffeomorphism and stabilization \cite{Theret_thesis}.
Similarly to the symplectic case,
the existence theorem is a special case
of the following result
\cite{Chaperon, Chekanov}:
if $\Lambda$ is a Legendrian submanifold
of $(J^1B, \xi_{\can})$
that has a generating function quadratic at infinity $F$
then for any contact isotopy $\{\phi_t\}_{t \in [0, 1]}$
of $(J^1B, \xi_{\can})$
there is a 1-parameter family $F_t$
of generating functions quadratic at infinity
for $\{\phi_t (\Lambda)\}$
such that $F_0$ is a stabilization of $F$.
Moreover,
the analogue of \autoref{proposition: Serre fibration}
also holds,
with a similar proof \cite{Theret_thesis}.

\begin{prop}\label{proposition: Serre fibration contact}
Let $\Leg$ be the space of Legendrian submanifolds
of the 1-jet bundle $(J^1B, \xi_{\can})$
of a compact manifold $B$.
Then the map $\mathcal{F} \rightarrow \Leg$
that sends a generating function
to the generated Legendrian submanifold
is a Serre fibration up to stabilization:
given a map $f: \Delta_d \rightarrow \Leg$,
a lift $F: \Delta_d \rightarrow \mathcal{F}$
and a homotopy $f_t: \Delta_d \rightarrow \Leg$,
$t \in [0, 1]$, of $f_0 = f$,
there is a homotopy $F_t: \Delta_d \rightarrow \mathcal{F}$
lifting $f_t$ such that $F_0$ is a stabilization of $F$.
\end{prop}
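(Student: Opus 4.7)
The plan is to adapt verbatim the argument used by Théret for the symplectic analogue \autoref{proposition: Serre fibration}, substituting the 1-parameter Chaperon--Chekanov lifting theorem stated just before the proposition for the Sikorav result used there. The $n = 0$ case of the proposition is precisely that theorem, so the task is to upgrade its 1-parameter output to a lift depending continuously on the parameter simplex.

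First I would realise the homotopy as the effect of a parametrised ambient contact isotopy. Given $f_t \colon \Delta_n \to \Leg$ with $f_0 = f$, a parametric contact isotopy extension theorem produces a smooth family $\{\phi^x_t\}_{(t,x) \in [0,1]\times \Delta_n}$ of contactomorphisms of $(J^1 B, \xi_{\can})$ with $\phi^x_0 = \id$ and $\phi^x_t(f(x)) = f_t(x)$ for all $x$ and $t$. This step is essentially routine because $\Delta_n$ is compact and contractible: the contact Hamiltonian generating the Legendrian isotopy along $f_t(x)$ can be extended to an ambient contact Hamiltonian on $J^1 B$ smoothly in $x$ via a parametric cut-off argument.

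Next I would run the Chaperon--Chekanov construction fibrewise over $\Delta_n$. That construction rests on three ingredients: (a)~subdividing $[0, 1]$ into intervals on which $\phi^x_t \circ (\phi^x_s)^{-1}$ is $C^1$-small enough that its image Legendrian in $J^1(J^1 B)$ projects diffeomorphically onto the zero section and hence admits a tautological generating function; (b)~an explicit composition formula producing a GFQI of a composition from GFQIs of the factors by introducing additional fibre variables; (c)~a stabilisation at $t = 0$ matching the given $F(x)$ up to a quadratic form. Each of these operations depends smoothly on its input. By the compactness of $\Delta_n \times [0,1]$, a single partition of $[0, 1]$ can be chosen to work uniformly in $x$, and by choosing the extra fibre variables and the stabilising quadratic form independently of $x$ one obtains a smooth family $F_t \colon \Delta_n \to \mathcal{F}$ with $\pi \circ F_t = f_t$ and $F_0$ a stabilisation of $F$ in the sense of the statement.

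The main technical obstacle is ensuring that the partition in (a) is uniform over the parameter simplex and that the locally constructed generating functions fit smoothly in $(t, x)$ across partition points. Uniformity follows from continuity of $(t, x) \mapsto \phi^x_t$ on the compact product $[0, 1] \times \Delta_n$, and the smooth matching across partition points is handled by the standard reparametrisation trick already employed in the 1-parameter Chaperon--Chekanov argument. Apart from these parametric refinements, the proof is formally identical to the symplectic one carried out in \cite{Theret_thesis}.
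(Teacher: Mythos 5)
Your proposal follows exactly the route the paper takes: the paper gives no independent argument for this proposition but defers to Th\'eret's thesis, remarking that the proof is the same as in the symplectic case (\autoref{proposition: Serre fibration}) with the Chaperon--Chekanov lifting theorem replacing Sikorav's, which is precisely the adaptation you carry out. Your parametric refinements (uniform subdivision over the compact simplex, fibrewise composition formula, stabilization at $t=0$) are the standard and correct way to fill in that citation.
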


Similarly,
the analogue of \autoref{lemma: lemmas gf}
still holds
(point (i) is one of the ingredients
of the uniqueness theorem of \cite{Theret_thesis},
point (ii) is a 1-parameter version of (i),
and point (iii) can be deduced from (ii)
and \autoref{proposition: Serre fibration contact}
by an argument as in the proof
of \autoref{lemma: lemmas gf} (iii)).

\begin{lemma}\label{lemma: lemmas gf contact}
Let $B$ be a compact manifold.
Then we have the following results:
\begin{enumerate}
\item[(i)]
If $F^{(s)}$, $s \in [0, 1]$, is a 1-parameter family
of generating functions quadratic at infinity
of the same Legendrian submanifold
of $(J^1B, \xi_{\can})$
then there is a 1-parameter family $\Phi_s$
of fibre preserving diffeomorphisms
with $\Phi_0 = \id$
and $F^{(s)} \circ \Phi_s = F^{(0)}$.
\item[(ii)] 
Let $\{\Lambda_t\}_{t \in [0, 1]}$
be a Legendrian isotopy
in $(J^1B, \xi_{\can})$.
Suppose that $F^{(s)}_t$, $s \in [0, 1]$,
is a 2-parameter family
of generating functions quadratic at infinity
such that every $F^{(s)}_t$
is a generating function of $\Lambda_t$.
Then there is a 2-parameter family $\Phi_{s, t}$
of fibre preserving diffeomorphisms
with $\Phi_{0, t} = \id$
and $F^{(s)}_t \circ \Phi_{s, t} = F^{(0)}_t$.
\item[(iii)]
If $F_t^{(0)}$ and $F_t^{(1)}$ are 1-parameter families
of generating functions quadratic at infinity
for the same Legendrian isotopy $\{\Lambda_t\}$
in $(J^1B, \xi_{\can})$
with $F_0^{(0)} = F_0^{(1)}$
then there are a non-degenerate quadratic form $Q$
and a 2-parameter family $\Phi_{s,t}$
of fibre preserving diffeomorphisms
such that $\Phi_{0, t} = \id$,
$(F_t^{(1)} \oplus Q) \circ \Phi_{1, t} = F_t^{(0)} \oplus Q$,
and $(F_0^{(0)} \oplus Q) \circ \Phi_{s, 0}^{\,-1}$
is a contractible loop.
Similarly for 2-parameter families
$F_{u, t}^{(0)}$ and $F_{u, t}^{(1)}$
of generating functions quadratic at infinity
for the same 2-parameter family $\{\Lambda_{u, t}\}$
of Legendrian submanifolds.
\end{enumerate}
\end{lemma}

Consider now the standard contact Euclidean space
$\big(\mathbb{R}^{2n+1}, \xi_0 = \ker (\alpha_0)\big)$,
with coordinates
$(x_1, \dots, x_n, y_1, \dots, y_n, \theta)$
and contact form
\[
\alpha_0 = d\theta - \sum_{j=1}^n \frac{x_j dy_j - y_j dx_j}{2} \,,
\]
and the contact product
\[
\big(\mathbb{R}^{2n+1} \times \mathbb{R}^{2n+1} \times \mathbb{R}
\,,\, \ker ( \pi_2^{\ast} \alpha_0
- e^{\rho} \pi_1^{\ast} \alpha_0 ) \big)\,,
\]
where $\rho$ denotes the coordinate in $\mathbb{R}$
and $\pi_1$ and $\pi_2$ denote the projections
on the first and second factors respectively.
The map
\[
\underline{\tau}: \mathbb{R}^{2n+1} \times \mathbb{R}^{2n+1} \times \mathbb{R}
\longrightarrow J^1\mathbb{R}^{2n+1}
\]
defined by 
\begin{gather*}
    \underline{\tau} \, (x, y, \theta, X, Y, \Theta, \rho) =
    \\
\Big(\frac{e^{\frac{\rho}{2}} x + X}{2} \,,\,
\frac{e^{\frac{\rho}{2}} y + Y}{2}
\,,\, \theta \,,
e^{\frac{\rho}{2}} y - Y \,,\,
X - e^{\frac{\rho}{2}} x \,,\,
e^{\rho} - 1 \,,\,
\Theta - \theta
+ \frac{e^{\frac{\rho}{2}} (yX - xY)}{2}\Big)
\end{gather*}
is a contactomorphism,
and sends the Legendrian diagonal
\[
\Delta = \{\, (x, y, \theta, x, y, \theta, 0) \,\} \subset \mathbb{R}^{2n+1} \times \mathbb{R}^{2n+1} \times \mathbb{R}
\]
to the zero section.
This contactomorphism is moreover strict
with respect to the contact forms
that we are considering:
the pullback by $\underline{\tau}$
of the canonical contact form
on $J^1\mathbb{R}^{2n+1}$
is equal to the contact form
$\pi_2^{\ast}\alpha_0 - e^{\rho} \pi_1^{\ast} \alpha_0$
on $\mathbb{R}^{2n+1} \times \mathbb{R}^{2n+1} \times \mathbb{R}$.
For a contactomorphism $\phi$
of $(\mathbb{R}^{2n+1}, \xi_0)$
with $\phi^{\ast} \alpha_0 = e^g \alpha_0$
we denote by
\[
\Gamma_{\phi}: \mathbb{R}^{2n+1} \rightarrow
J^1 \mathbb{R}^{2n+1}
\]
the composition of the Legendrian graph
\[
\gr(\phi): \mathbb{R}^{2n+1} \rightarrow
\mathbb{R}^{2n+1} \times \mathbb{R}^{2n+1} \times \mathbb{R} \,,\;
p \mapsto \big( p, \phi(p), g(p) \big)
\]
with $\underline{\tau}$.
We say that $F$
is a generating function of $\phi$
if it is a generating function
of the Legendrian submanifold $\Lambda_{\phi} = \im (\Gamma_{\phi})$
of $( J^1\mathbb{R}^{2n+1}, \xi_{\can} )$.
Then $\widetilde{i_F}: \Sigma_F \rightarrow J^1\mathbb{R}^{2n+1}$
gives a diffeomorphism between $\Sigma_F$
and $\Lambda_{\phi}$,
and the composition
\begin{equation}\label{equation: diffeomorphism gf contact}
\begin{tikzcd}
{\Psi_F:\,\mathbb{R}^{2n+1}} \arrow[r, "\Gamma_{\phi}"] & \Lambda_{\phi} \arrow[r] &[3ex] \Sigma_F 
\end{tikzcd}
\end{equation}
of the inverse of this diffeomorphism with $\Gamma_{\phi}$
is a diffeomorphism
that induces a bijection
between the translated points of $\phi$
and the critical points of $F$.
Moreover,
the critical value $F \big(\Psi_F (p)\big)$
of the critical point $\Psi_F (p)$
corresponding to a translated point $p$
is equal to the contact action of $p$.

\begin{rmk}\label{remark: gf Reeb}
Let $F: \mathbb{R}^{2n+1} \times \mathbb{R}^N \rightarrow \mathbb{R}$
be a generating function of a contactomorphism $\phi$
of $(\mathbb{R}^{2n+1}, \xi_0)$.
For any $t \in \mathbb{R}$
the function
\[
F_t: \mathbb{R}^{2n+1} \times \mathbb{R}^N \rightarrow \mathbb{R} \,,\;
F_t (x, y, \theta, \zeta)
= F (x, y, \theta, \zeta) + t
\]
is then a generating function
of $\varphi_t^{\alpha_0} \circ \phi$,
where $\{\varphi_t^{\alpha_0}\}$
denotes the Reeb flow
\[
\varphi_t^{\alpha_0} (x, y, \theta)
= (x, y, \theta + t) \,.
\]
\end{rmk}

If $\phi$ is a contactomorphism of $(\mathbb{R}^{2n+1}, \xi_0)$
contact isotopic to the identity
then the Legendrian submanifold $\Lambda_{\phi}$
of $(J^1\mathbb{R}^{2n+1}, \xi_{\can})$
is contact isotopic to the zero section.
If moreover $\phi$ is the lift to $(\mathbb{R}^{2n+1}, \xi_0)$
of a compactly supported contactomorphism
of $(\mathbb{R}^{2n} \times S^1, \xi_0)$
contact isotopic to the identity
then $\Lambda_{\phi}$ extends
to a Legendrian submanifold $\underline{\Lambda}_{\phi}$
of $\big( J^1 (S^{2n} \times \mathbb{R}) , \xi_{\can} \big)$,
which descends to a Legendrian submanifold $\overline{\Lambda_{\phi}}$
of $\big( J^1 (S^{2n} \times S^1) , \xi_{\can} \big)$,
still contact isotopic to the zero section.
If $\overline{F}: (S^{2n} \times S^1) \times \mathbb{R}^N \rightarrow \mathbb{R}$
is a generating function of $\overline{\Lambda_{\phi}}$
then the induced functions $F$ on $\mathbb{R}^{2n + 1} \times \mathbb{R}^N$
and $\underline{F}$ on $(S^{2n} \times \mathbb{R}) \times \mathbb{R}^N$
are generating functions of $\Lambda_{\phi}$
and $\underline{\Lambda}_{\phi}$ respectively,
and are invariant by the action of $\mathbb{Z}$
on $\mathbb{R}^{2n + 1} \times \mathbb{R}^N$
generated by the map
\begin{equation}\label{equation: action Z for F}
(x, y, \theta, \zeta) \mapsto (x, y, \theta + 1, \zeta)
\end{equation}
and the induced action on $S^{2n} \times \mathbb{R} \times \mathbb{R}^N$.
If $\overline{F}$ is quadratic at infinity then
we say that $F$ is a generating function quadratic at infinity
of $\phi$ and of the associated contactomorphism
of $(\mathbb{R}^{2n} \times S^1, \xi_0)$,
and denote by $F_{\infty}$ the quadratic form
on $\mathbb{R}^{2n + 1} \times \mathbb{R}^N$
induced by $\overline{F}_{\infty}$.

Recall that the lift to $(\mathbb{R}^{2n+1}, \xi_0)$
or $(\mathbb{R}^{2n} \times S^1, \xi_0)$
of a compactly supported Hamiltonian symplectomorphism
$\varphi$ of $(\mathbb{R}^{2n}, \omega_0)$
is the contactomorphism $\widetilde{\varphi}$ defined by
\[
\widetilde{\varphi} (x, y, \theta)
= \big( \varphi(x, y), \theta + S (x, y) \big) \,,
\]
where $S$ is the compactly supported function
that satisfies $\varphi^{\ast} \lambda_0 - \lambda_0 = dS$.
The following result can be proved as \cite[Lemma 3.2]{San11}.

\begin{lemma}\label{proposition: gf lift to contact}
If $f: \mathbb{R}^{2n} \times \mathbb{R}^N \rightarrow \mathbb{R}$
is a normalized generating function
of a compactly supported Hamiltonian symplectomorphism $\varphi$
of $(\mathbb{R}^{2n}, \omega_0)$
then
\[
F: \mathbb{R}^{2n+1} \times \mathbb{R}^N
\rightarrow \mathbb{R} \,,\;
F (x, y, \theta, \zeta)
= f (x, y, \zeta)
\]
is a generating function of the lift of $\varphi$
to $(\mathbb{R}^{2n+1}, \xi_0)$ or $(\mathbb{R}^{2n} \times S^1, \xi_0)$.
\end{lemma}

We say that a generating function quadratic at infinity $F$
of a compactly supported Hamiltonian symplectomorphism
of $(\mathbb{R}^{2n}, \omega_0)$
or of a compactly supported contactomorphism
of $(\mathbb{R}^{2n+1}, \xi_0)$ or $(\mathbb{R}^{2n} \times S^1, \xi_0)$
contact isotopic to the identity
is \emph{special}
if $F_{\infty}$ does not depend on the base variable
and $F = F_{\infty}$ outside a compact set.
In particular (in the case of Hamiltonian symplectomorphisms),
such $F$ is normalized.

\begin{lemma}\label{lemma: special}
Every family $\{\phi_t\}$,
for $t$ in a parameter space $[0, 1]^m$,
of compactly supported Hamiltonian symplectomorphisms of $(\mathbb{R}^{2n}, \omega_0)$
or of compactly supported contactomorphisms of $(\mathbb{R}^{2n} \times S^1, \xi_0)$
with $\phi_0 = \id$
has a family $F_t$ of special generating functions quadratic at infinity
such that $F_0$ is a non-degenerate quadratic form
and $(F_t)_{\infty}$ does not depend on $t$.
Moreover, if $\phi_{(t_1, \dots, t_{m-1}, 0)} = \id$
for all $(t_1, \dots, t_{m-1}) \in [0, 1]^{m-1}$
then there exists such family
with $F_{(t_1, \dots, t_{m-1}, 0)}$
a non-degenerate quadratic form
for all $(t_1, \dots, t_{m-1}, 0)$.
\end{lemma}

\begin{proof}
Since the identity of $(\mathbb{R}^{2n}, \omega_0)$
or of $(\mathbb{R}^{2n} \times S^1, \xi_0)$
has a special generating function quadratic at infinity
(any non-degenerate quadratic form
that does not depend on the base variable),
the result follows by induction on $m$
if we prove that if $\{\phi_s\}_{s \in [0, 1]^m}$
is a family of compactly supported
Hamiltonian symplectomorphisms or contactomorphisms
with $\phi_0 = \id$
having a family $F_s$ of special generating functions quadratic at infinity
then for every family $\{\psi_{s, t}\}$
for $s \in [0, 1]^m$ and $t \in [0, 1]$
with $\psi_{s, 0} = \id$,
$\{ \psi_{s, t} \circ \phi_s \}$
has a family $F_{s, t}$ of special generating functions quadratic at infinity
such that $F_{s, 0}$ is a stabilization of $F_s$
and $(F_{s, t})_{\infty}$ does not depend on $t$.
For this, we follow the construction used in \cite[Proposition 3.4]{San11},
which in turn is taken from \cite[Theorem 3]{Chaperon}
and \cite[Section III.2]{Theret_thesis} 
(see also \cite[Proposition 3.4]{Giroux}).
We only present the main lines of the argument,
and invite the reader to consult the above references
for more details.

Recall first that the \textit{transition function}
of a $\mathcal{C}^1$-small contactomorphism $\phi$
of $(J^1 \mathbb{R}^{2n+1}, \xi_{\can})$
is the function $G: J^1 \mathbb{R}^{2n+1} \rightarrow \mathbb{R}$
defined  by 
\[
G (q, p, z) = g_{p, z} (q) - f_{p, z} (q) \,,
\]
where, for every $p$ and $z$,
$f_{p, z} (q) = z + pq$
and $g_{p, z}$ is the function such that
$\im (j^1 g_{p, z}) = \varphi \big( \im (j^1 f_{p, z}) \big)$.
In particular,
$G = 0$ if and only if $\phi$ is the identity.
Moreover,
if $\Lambda$ is a Legendrian submanifold
of $(J^1 \mathbb{R}^{2n+1}, \xi_{\can})$
with generating function
$F: \mathbb{R}^{2n+1} \times \mathbb{R}^N \rightarrow \mathbb{R}$
then the function
\[
G \,\sharp\, F:
\mathbb{R}^{2n+1} \times
\big((\mathbb{R}^{2n+1})^{\ast} \times \mathbb{R}^{2n+1} \times \mathbb{R}^N\big)
\rightarrow \mathbb{R} \,,\;
\]
\[
(\underline{q}; p, q, z) \mapsto
G \big( \underline{q}, p, F (\underline{q} + q, \zeta) - p (\underline{q} + q) \big)
+ F (\underline{q} + q, \zeta) - pq
\]
is a generating function of $\phi (\Lambda)$.
Recall also that every contactomorphism $\phi$
of $(\mathbb{R}^{2n+1}, \xi_0)$
induces a contactomorphism $\Psi_{\phi}$
of $(J^1 \mathbb{R}^{2n+1}, \xi_{\can})$,
which is defined to be the conjugation by $\underline{\tau}$
of the contactomorphism
\[
(p, P, \rho) \mapsto \big( p, \phi (P), \rho + g (P) \big)
\]
of the contact product of $(\mathbb{R}^{2n+1}, \xi_0)$,
where $g$ is the conformal factor of $\phi$.
Then $\Lambda_{\phi} = \Psi_{\phi} (\Lambda_{\id})$
and $\Psi_{\phi \psi} = \Psi_{\phi} \circ \Psi_{\psi}$,
in particular $\Lambda_{\phi \psi} = \Psi_{\phi} (\Lambda_{\psi})$.
Notice moreover that if $\phi$ is the lift
of a compactly supported contactomorphism of $(\mathbb{R}^{2n} \times S^1, \xi_0)$
contact isotopic to the identity
then $\Psi_{\phi}$ is equivariant by translation by $1$ in the $z$-direction,
and if $\phi$ is the lift
of a compactly supported Hamiltonian symplectomorphism
of $(\mathbb{R}^{2n}, \omega_0)$
then $\Psi_{\phi}$ is equivariant by translation by $a$ in the $z$-direction
for every $a \in \mathbb{R}$.

Suppose now that $\{\phi_s\}_{s \in [0, 1]^m}$
is a family of compactly supported contactomorphisms
of $(\mathbb{R}^{2n} \times S^1, \xi_0)$
having a family of special generating functions quadratic at infinity
$F_s: \mathbb{R}^{2n+1} \times \mathbb{R}^N \rightarrow \mathbb{R}$,
and $\{\psi_{s, t}\}$ for $s \in [0, 1]^m$ and $t \in [0, 1]$
is a family of $\mathcal{C}^1$-small compactly supported contactomorphisms
with $\psi_{s, 0} = \id$.
Let $\{\Phi_s\}$ and $\{\Psi_{s, t}\}$
be the lifts to $(\mathbb{R}^{2n+1}, \xi_0)$,
and let $G_{s, t}: J^1 \mathbb{R}^{2n+1} \rightarrow \mathbb{R}$
be the family of transition functions of $\{\Psi_{\Psi_{s, t}}\}$.
Then $G_{s, t} \,\sharp F_s$ is a family of generating functions
for $\{ \psi_{s, t} \circ \phi_s \}$.
Since the $\Psi_{\Psi_{s,t}}$ are equivariant by translation by $1$
in the $z$-direction,
the transition functions satisfy
\[
G_{s, t} (x, y, \theta + 1, p_x, p_y, p_{\theta}, z - p_{\theta})
= G_{s,t} (x, y, \theta, p_x, p_y, p_{\theta}, z) \,.
\]
Since moreover the $F_s$ are invariant
by the $\mathbb{Z}$-action \eqref{equation: action Z for F},
we have
\[
G_{s, t} \,\sharp \, F_s \; \big( \underline{q} + (0, 0, 1) , p , q , \zeta \big)
\]
\[
= G_{s, t} \Big( \underline{q} + (0, 0, 1) \,,\, p \,,\,
F_s \big( \underline{q} + (0, 0, 1) + q \,,\, \zeta \big)
- p (\underline{q} + q) - p_{\theta} \Big)
+ F_s \big( \underline{q} + (0, 0, 1) + q \,,\, \zeta \big) - pq
\]
\[
= G_{s, t} \Big( \underline{q}, p,
F_s \big( \underline{q} + q , \zeta \big)
- p (\underline{q} + q) \Big)
+ F_s \big( \underline{q} + q, \zeta \big) - pq
\]
\[
= G_{s, t} \,\sharp \, F_s ( \underline{q}, p, q, \zeta) \,,
\]
and so the functions $G_{s, t} \,\sharp \, F_s$
descend to functions on
$\mathbb{R}^{2n} \times S^1
\times (\mathbb{R}^{2n+1})^{\ast} \times \mathbb{R}^{2n+1} \times \mathbb{R}^N$.
Similarly,
if $\{\phi_s\}$ and $\{\psi_{s, t}\}$ are lifts
of compactly supported Hamiltonian symplectomorphisms of $(\mathbb{R}^{2n}, \omega_0)$
then the transition functions satisfy
\[
G_{s, t} (x, y, \theta + a, p_x, p_y, p_{\theta}, z - a p_{\theta})
= G_{s,t} (x, y, \theta, p_x, p_y, p_{\theta}, z)
\]
for all $a \in \mathbb{R}$ and
by \autoref{proposition: gf lift to contact}
we can take the functions $F_s$ not depending on $\theta$,
thus the functions $G_{s, t} \,\sharp \, F_s$ descend
to $\mathbb{R}^{2n} \times (\mathbb{R}^{2n+1})^{\ast} \times \mathbb{R}^{2n+1} \times \mathbb{R}^N$.
Moreover,
there is a family of fibre preserving diffeomorphisms $\Phi_{s, t}$
with $\Phi_{s, 0} = \id$
such that the functions $F_{s, t} := (G_{s, t} \,\sharp \, F_s) \circ \Phi_{s, t}$
satisfy
\[
F_{s, t} ( \underline{q}, p, q, \zeta)
= (F_s)_{\infty} (\zeta) - pq
\]
outside a compact set
and $F_{s, 0}$ is a stabilization of $F_s$.
In particular,
$F_{s,t}$ is a family of special generating functions quadratic at infinity
for $\{\psi_{s, t} \circ \phi_s\}$
such that $F_{(s, 0)}$ is a stabilization of $F_s$
and $(F_{s,t})_{\infty}$ does not depend on $t$.
This finishes the proof, in the symplectic and contact case,
under the assumption that all $\{\psi_{s, t}\}$ are $\mathcal{C}^1$-small.
The general case can be obtained by writing $\{\psi_{s, t}\}_{t \in [0, 1]}$
as the concatenation 
$\{\psi_{s, t}\}_{t \in [0, t_1]} \sqcup \dots \sqcup \{\psi_{s, t}\}_{t \in [t_M, 1]}$
for a sufficiently fine subdivision
$0 < t_1 < \dots < t_M < 1$.
\end{proof}

We say that a compactly supported Hamiltonian or contact isotopy is non-negative
if it is generated by a non-negative compactly supported
Hamiltonian function.
Consider the partial order
on the group of compactly supported Hamiltonian symplectomorphisms
of $(\mathbb{R}^{2n}, \omega_0)$
defined by posing $\varphi_0 \leq \varphi_1$
if $\varphi_1 \circ \varphi_0^{-1}$ is the time-1 map
of a non-negative compactly supported Hamiltonian isotopy \cite{Viterbo},
and the partial order
on the group of compactly supported contactomorphisms
of $(\mathbb{R}^{2n} \times S^1, \xi_0)$ contact isotopic to the identity
defined by posing $\phi_0 \leq \phi_1$
if $\phi_1 \circ \phi_0^{-1}$ is the time-1 map
of a non-negative compactly supported contact isotopy \cite{Bhupal, San11}.
Notice that a compactly supported Hamiltonian isotopy $\{\varphi_t\}$
is non-negative if and only if its lift $\{\widetilde{\varphi_t}\}$
is non-negative,
thus for any two compactly supported Hamiltonian symplectomorphisms
$\varphi_0$ and $\varphi_1$ we have
$\varphi_0 \leq \varphi_1$ if and only if
$\widetilde{\varphi_0} \leq \widetilde{\varphi_1}$.
The next result follows from the proof
of \autoref{lemma: special}
and the Hamilton--Jacobi equation in \cite[Lemma 3.6]{San11}.

\begin{lemma}\label{lemma: monotonicity special}
Let $\{\phi_{s, t}^{(0)}\}$ and $\{\phi_{s, t}^{(1)}\}$
be two 2-parameter families of compactly supported
Hamiltonian symplectomorphisms of $(\mathbb{R}^{2n}, \omega_0)$
or compactly supported contactomorphisms
of $(\mathbb{R}^{2n} \times S^1, \xi_0)$
contact isotopic to the identity
with
$\phi_{s, 0}^{(0)} = \phi_{s, 0}^{(1)} = \id$
and $\phi_{s, t}^{(0)} \leq \phi_{s, t}^{(1)}$.
Let $\{\mu_{s, t}^{(u)}\}_{u \in [0, 1]}$
be a family of non-negative compactly supported
Hamiltonian or contact isotopies
such that $\mu_{s, t}^{(0)} = \id$
and $\mu_{s, t}^{(1)} = \phi_{s, t}^{(1)} \circ (\phi_{s, t}^{(0)})^{-1}$.
Denote $\phi_{s, t}^{(u)} = \mu_{s, t}^{(u)} \circ \phi_{s, t}^{(0)}$.
Then there is a family
of special generating functions quadratic at infinity
$F_{s, t}^{(u)}$ for $\{\phi_{s, t}^{(u)}\}$
such that $F_{s, 0}^{(u)}$ is a non-degenerate quadratic form for all $s$ and $u$,
$(F_{s, t}^{(u)})_{\infty}$ does not depend on $s,t$ and $u$,
and $\frac{d}{du} F_{s, t}^{(u)} \geq 0$.
\end{lemma}

\section{\texorpdfstring{$\mathbb{Z}_k$}{}-invariant functions detecting \texorpdfstring{$k$}{}-periodic points of Hamiltonian symplectomorphisms}
\label{section: invariant generating functions symplectic}

An important ingredient
to define the generating function $\mathbb{Z}_k$-equivariant homology
of domains of $(\mathbb{R}^{2n}, \omega_0)$
is the following composition formula from \cite{Allais}
(which is given there
only in the case of generating functions
without fibre variables,
but can easily be adapted to the general case).

\begin{prop}\label{proposition: composition formula}
Let $k$ be an odd natural number.
Suppose that
$F: \mathbb{R}^{2n} \times \mathbb{R}^N \rightarrow \mathbb{R}$
is a generating function
of a symplectomorphism $\varphi$
of $(\mathbb{R}^{2n}, \omega_0)$.
Then the function
\[
F^{\sharp k}:
\mathbb{R}^{2n}
\times (\mathbb{R}^{2n(k-1)} \times \mathbb{R}^{Nk})
\rightarrow \mathbb{R}
\]
defined by
\[
F^{\sharp k}
(x_1, y_1; x_2, y_2, \dots, x_k, y_k, \zeta_1, \dots, \zeta_k)
= \sum_{j=1}^k
F \Big( \frac{x_j + x_{j+1}}{2},
\frac{y_j + y_{j+1}}{2}, \zeta_j \Big)
+ \frac{1}{2} \, (x_j y_{j+1} - x_{j+1}y_j) \,,
\]
with the convention $(x_{k+1}, y_{k+1}) = (x_1, y_1)$,
is a generating function of $\varphi^k$.
Moreover, if $\varphi$ is compactly supported
and $F$ is normalized
then $F^{\sharp k}$ is also normalized.
\end{prop}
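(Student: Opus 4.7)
The plan is to verify directly that $F^{\sharp k}$ generates the Lagrangian submanifold $\im(\Gamma_{\varphi^k}) \subset T^{\ast}\mathbb{R}^{2n}$. I proceed in three stages: identify the fibre critical set of $F^{\sharp k}$, compute the Lagrangian immersion $i_{F^{\sharp k}}$ and match it with $\Gamma_{\varphi^k}$, and finally check normalization.

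Writing $X_j := \frac{x_j + x_{j+1}}{2}$ and $Y_j := \frac{y_j + y_{j+1}}{2}$ with the cyclic convention $(x_{k+1}, y_{k+1}) = (x_1, y_1)$, and denoting the $j$-th summand $F(X_j, Y_j, \zeta_j)$ by $F_j$, the equations $\partial_{\zeta_j} F^{\sharp k} = 0$ read $(X_j, Y_j, \zeta_j) \in \Sigma_F$. By the generating function hypothesis on $F$, each such $\zeta_j$ determines a point $p^{(j)} \in \mathbb{R}^{2n}$ with $\frac{1}{2}(p^{(j)} + \varphi(p^{(j)})) = (X_j, Y_j)$, and one has $\partial_1 F_j = \varphi_y(p^{(j)}) - p^{(j)}_y$ and $\partial_2 F_j = p^{(j)}_x - \varphi_x(p^{(j)})$. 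Differentiating $F^{\sharp k}$ in $(x_j, y_j)$ for $j = 2, \dots, k$ by the chain rule, the contributions from the two adjacent summands $F_{j-1}$ and $F_j$ combine with those from the area terms $\frac{1}{2}(x_{j+1} y_j - x_j y_{j+1})$ and $\frac{1}{2}(x_j y_{j-1} - x_{j-1} y_j)$ and reduce, after cancellation, to the chain condition $\varphi(p^{(j-1)}) = p^{(j)}$ for $j = 2, \dots, k$. Fibre critical points therefore encode forward orbit segments $(p^{(1)}, \varphi(p^{(1)}), \dots, \varphi^{k-1}(p^{(1)}))$ of $\varphi$.

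Next I would solve for the $(x_j, y_j)$'s in terms of the orbit points. The midpoint relations $p^{(j)} + p^{(j+1)} = (x_j + x_{j+1}, y_j + y_{j+1})$ for $j = 1, \dots, k-1$ yield the alternating recursion $x_j - p^{(j)}_x = (-1)^{j-1}(x_1 - p^{(1)}_x)$, and similarly in $y$. The closing-up relation $p^{(k)} + \varphi^k(p^{(1)}) = (x_k + x_1, y_k + y_1)$ (coming from $X_k = \frac{x_k + x_1}{2}$) then determines $(x_1, y_1)$ uniquely \emph{precisely because $k$ is odd}: the coefficient $1 + (-1)^{k-1}$ of $(x_1 - p^{(1)}_x)$ equals $2$ rather than $0$, giving $(x_1, y_1) = \frac{1}{2}(p^{(1)} + \varphi^k(p^{(1)}))$. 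This identifies the base point with the $\tau$-base of $(p^{(1)}, \varphi^k(p^{(1)}))$. A direct computation of $\partial_{x_1} F^{\sharp k}$ and $\partial_{y_1} F^{\sharp k}$ at the fibre critical point, using the derivative expressions above together with the solved recursion, then yields the covector $(\varphi^k(p^{(1)})_y - p^{(1)}_y,\, p^{(1)}_x - \varphi^k(p^{(1)})_x)$, so that $i_{F^{\sharp k}}$ parametrizes $\Gamma_{\varphi^k}(\mathbb{R}^{2n})$. Transversality of $dF^{\sharp k}$ to the conormal bundle follows from the transversality of $dF$ applied to each of the $k$ summands, so $F^{\sharp k}$ is indeed a generating function of $\varphi^k$.

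For normalization, suppose $\varphi$ is compactly supported; its extension to $S^{2n}$ fixes the point at infinity, and the corresponding fibre critical point of $F^{\sharp k}$ is the constant chain $p^{(j)} \equiv \infty$, at which all $(x_j, y_j)$ coincide. The area terms $\frac{1}{2}(x_{j+1} y_j - x_j y_{j+1})$ then vanish pairwise, and each $F_j$ vanishes by the normalization of $F$, so $F^{\sharp k}$ is also normalized. I expect the main obstacle to be the bookkeeping in the reduction of the fibre critical equations to the chain condition, and in the closing-up recursion expressing $(x_j, y_j)$ in terms of the orbit; the parity of $k$ enters decisively in the latter, and is what makes the formula work as stated.
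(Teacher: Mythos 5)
Your proposal is correct and follows essentially the same route as the paper: identify the fibre critical equations with orbit segments of $\varphi$, invert the midpoint relations via the alternating recursion (which is exactly where the oddness of $k$ enters, as in the paper's formula $x_j = \sum_{l=0}^{k-1}(-1)^l X_{j+l}$), and match $i_{F^{\sharp k}}$ with $\Gamma_{\varphi^k}$. The only cosmetic difference is in the normalization step, where you evaluate directly at the constant chain at infinity while the paper passes through the identity $F^{\sharp k} = \sum_j S(\overline{X}_j,\overline{Y}_j) = \mathcal{A}_{\varphi^k}$; both are fine.
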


\begin{proof}
For every $j$ we have
\[
\frac{\partial F^{\sharp k}}{\partial \zeta_j}
(x_1, y_1, \dots, x_k, y_k, \zeta_1, \dots, \zeta_k)
= \frac{\partial F}{\partial \zeta_j}
\Big( \frac{x_j + x_{j+1}}{2},
\frac{y_j + y_{j+1}}{2}, \zeta_j \Big) \,,
\]
thus
$\frac{\partial F^{\sharp k}}{\partial \zeta_j}
(x_1, y_1, \dots, x_k, y_k, \zeta_1, \dots, \zeta_k) = 0$
if and only if
$\big( \frac{x_j + x_{j+1}}{2},
\frac{y_j + y_{j+1}}{2}, \zeta_j \big)$
is a fibre critical point of $F$.
In this case we define
\begin{equation}\label{equation: change of coordinates 1 sympl}
\begin{cases}
X_j = \frac{x_j + x_{j+1}}{2} \\
Y_j = \frac{y_j + y_{j+1}}{2}
\end{cases}
\end{equation}
and we let $(\overline{X}_j, \overline{Y}_j) \in \mathbb{R}^{2n}$
be the image of $(X_j, Y_j, \zeta_j)$
by the inverse of the diffeomorphism
\eqref{equation: diffeomorphism gf symplectic}.
Then
\begin{equation}\label{equation: change of coordinates 2 sympl}
\begin{cases}
X_j = \frac{\overline{X}_j + \varphi_x (\overline{X}_j, \overline{Y}_j) }{2} \\
Y_j = \frac{\overline{Y}_j + \varphi_y (\overline{X}_j, \overline{Y}_j) }{2}
\end{cases}
\end{equation}
and 
\[
\begin{cases}
d_1F (X_j, Y_j, \zeta_j)
= \overline{Y}_j - \varphi_y (\overline{X}_j, \overline{Y}_j) \\
d_2F (X_j, Y_j, \zeta_j)
= \varphi_x (\overline{X}_j, \overline{Y}_j) - \overline{X}_j \,,
\end{cases}
\]
where we denote
$\varphi (\overline{X}_j, \overline{Y}_j)
= \big( \varphi_x (\overline{X}_j, \overline{Y}_j),
\varphi_y (\overline{X}_j, \overline{Y}_j) \big)$.
Moreover, since $k$ is odd,
\eqref{equation: change of coordinates 1 sympl}
implies
\begin{equation}\label{equation: change of coordinates 3 sympl}
\begin{cases}
x_j = \sum_{l=0}^{k-1} (-1)^l X_{j+l} \\
y_j = \sum_{l=0}^{k-1} (-1)^l Y_{j+l} \,,
\end{cases}
\end{equation}
where we use a cyclic convention on the indices:
$(X_{k+m}, Y_{k+m}) := (X_m, Y_m)$ for every $m$.

For every $j$ we have
\begin{gather*}
\frac{\partial F^{\sharp k}}{\partial x_j}
(x_1, y_1, \dots, x_k, y_k, \zeta_1, \dots, \zeta_k)\\ 
= \frac{1}{2} \, d_1F (X_j, Y_j, \zeta_j)
+ \frac{1}{2} \, d_1F (X_{j-1}, Y_{j-1}, \zeta_{j-1})
+ \frac{1}{2} \, (y_{j+1} - y_{j-1})\\
= \frac{1}{2} \, d_1F (X_j, Y_j, \zeta_j)
+ \frac{1}{2} \, d_1F (X_{j-1}, Y_{j-1}, \zeta_{j-1})
+ Y_j - Y_{j-1} \\
= \overline{Y}_j - \varphi_y (\overline{X}_{j-1}, \overline{Y}_{j-1}) \,,
\end{gather*}
and similarly
\[
\frac{\partial F^{\sharp k}}{\partial y_j}
(x_1, y_1, \dots, x_k, y_k, \zeta_1, \dots, \zeta_k)
= \varphi_x (\overline{X}_{j-1}, \overline{Y}_{j-1}) - \overline{X}_j \,.
\]
Thus $(x_1, y_1, \dots, x_k, y_k, \zeta_1, \dots, \zeta_k)$
is a fibre critical point of $F^{\sharp k}$
if and only if $(X_j, Y_j, \zeta_j)$
is a fibre critical point of $F$ for all $j$
and
\begin{equation}\label{equation: fibre critical points}
(\overline{X}_j, \overline{Y}_j) =
\varphi (\overline{X}_{j-1}, \overline{Y}_{j-1})
\end{equation}
for $j= 2, \dots, k$.

We leave to the reader the verification that
the differential of the vertical derivative of $F^{\sharp k}$
at a fibre critical point has maximal rank,
and so $F^{\sharp k}$ satisfies the transversality condition
in the definition of generating functions.
In order to prove that $F^{\sharp k}$
is a generating function of $\varphi^k$
it then remains to show that the Lagrangian immersion
$i_{F^{\sharp k}}:
\Sigma_{F^{\sharp k}} \rightarrow T^{\ast}\mathbb{R}^{2n}$
induces a diffeomorphism
between $\Sigma_{F^{\sharp k}}$ and $L_{\varphi^k}$.
But for any fibre critical point 
$(x_1, y_1, \dots, x_k, y_k, \zeta_1, \dots, \zeta_k)$
of $F^{\sharp k}$
the relations 
\eqref{equation: change of coordinates 2 sympl},
\eqref{equation: change of coordinates 3 sympl}
and \eqref{equation: fibre critical points}
give
\[
(x_1, y_1)
= \Big(
\frac{\overline{X}_1
+ \varphi_x (\overline{X}_k, \overline{Y}_k)}{2} \,,\,
\frac{\overline{Y}_1
+ \varphi_y (\overline{X}_k, \overline{Y}_k)}{2}
\Big)
\]
and
\[
\varphi (\overline{X}_k, \overline{Y}_k)
= \varphi^k (\overline{X}_1, \overline{Y}_1) \,,
\]
and so we have
\begin{gather*}
i_{F^{\sharp k}} 
(x_1, y_1, \dots, x_k, y_k, \zeta_1, \dots, \zeta_k) \\
= \Big( x_1, y_1,
\frac{\partial F^{\sharp k}}{\partial x_1}
(x_1, y_1, \dots, x_k, y_k, \zeta_1, \dots, \zeta_k) ,
\frac{\partial F^{\sharp k}}{\partial y_1}
(x_1, y_1, \dots, x_k, y_k, \zeta_1, \dots, \zeta_k) \Big)\\
= \Big(
\frac{\overline{X}_1
+ (\varphi^k)_x (\overline{X}_1, \overline{Y}_1)}{2} \,,\,
\frac{\overline{Y}_1
+ (\varphi^k)_y (\overline{X}_1, \overline{Y}_1)}{2} \,,\,
\overline{Y}_1 - (\varphi^k)_y (\overline{X}_1, \overline{Y}_1) \,,\,
(\varphi^k)_x (\overline{X}_1, \overline{Y}_1) - \overline{X}_1
\Big) \\
= \Gamma_{\varphi^k} (\overline{X}_1, \overline{Y}_1) \,.
\end{gather*}
Thus 
$i_{F^{\sharp k}}:
\Sigma_{F^{\sharp k}} \rightarrow T^{\ast}\mathbb{R}^{2n}$
is the composition of the diffeomorphism
\begin{equation}\label{equation: diffeomorphism in the proof}
\Sigma_{F^{\sharp k}} \rightarrow \mathbb{R}^{2n} \,,\;
(x_1, y_1, \dots, x_k, y_k, \zeta_1, \dots, \zeta_k)
\mapsto (\overline{X}_1, \overline{Y}_1)
\end{equation}
with $\Gamma_{\varphi^k}$,
and so it induces a diffeomorphism
between $\Sigma_{F^{\sharp k}}$ and $L_{\varphi^k}$.

Suppose now that $\varphi$ is compactly supported
and $F$ is normalized.
If $(x_1, y_1, \dots, x_k, y_k, \zeta_1, \dots, \zeta_k)$
is a critical point of $F^{\sharp k}$
then \eqref{equation: change of coordinates 2 sympl}
and \eqref{equation: change of coordinates 3 sympl}
give $x_j = \overline{X}_j$ and $y_j = \overline{Y}_j$.
By \autoref{proposition: critical values gf action}
we thus have
\begin{gather*}
F^{\sharp k}
(x_1, y_1, \dots, x_k, y_k, \zeta_1, \dots, \zeta_k)
= \sum_{j=1}^k F (X_j, Y_j, \zeta_j)
+ \frac{1}{2} \, (x_j y_{j+1} - x_{j+1}y_j)
\\
= \sum_{j=1}^k S (\overline{X}_j, \overline{Y}_j)
= \mathcal{A}_{\varphi^k} (\overline{X}_1, \overline{Y}_1) \,,
\end{gather*}
where for the last equality
we have used that
$(\varphi^k)^{\ast} \lambda_0 - \lambda_0 = d \big( \sum_{j=1}^k S \circ \varphi^{j-1} \big)$.
We thus conclude that $F^{\sharp k}$ is normalized.
\end{proof}

The function
\[
F^{\sharp k}: \mathbb{R}^{2nk} \times \mathbb{R}^{Nk}
\rightarrow \mathbb{R}
\]
is invariant by the action of $\mathbb{Z}_k$
generated by the map
\begin{equation}\label{equation: action symplectic}
\sigma_k: (x_1, y_1, \dots, x_k, y_k, \zeta_1, \dots, \zeta_k)
\mapsto (x_2, y_2, \dots, x_k, y_k, x_1, y_1, \zeta_2, \dots, \zeta_k, \zeta_1) \,.
\end{equation}
Using the notations of the proof
of \autoref{proposition: composition formula},
we have seen that
$(x_1, y_1, \dots, x_k, y_k, \zeta_1, \dots, \zeta_k)$
is a critical point of $F^{\sharp k}$
if and only if
$(\overline{X}_1, \overline{Y}_1)$
is a $k$-periodic point of $\varphi$.
It follows from the proof
of \autoref{proposition: composition formula}
that under this bijection
between the critical points of $F^{\sharp k}$
and the $k$-periodic points of $\varphi$
the $\mathbb{Z}_k$-action \eqref{equation: action symplectic}
corresponds to the $\mathbb{Z}_k$-action
on the set of $k$-periodic points of $\varphi$
generated by the map that sends a $k$-periodic point $p$
to the $k$-periodic point $\varphi(p)$.

We now describe the relation
between the index of the Hessian of $F^{\sharp k}$
at a critical point
and the Maslov index of the corresponding fixed point
of $\varphi^k$.
This relation is needed in \autoref{section: balls},
in the calculation of the equivariant homology of balls.
Recall first from \cite[Appendix B]{Theret - camel}
and \cite[7.4.2]{Traynor}
that the Maslov index of a fixed point
of a Hamiltonian symplectomorphism
of $(\mathbb{R}^{2n}, \omega_0)$
can be defined using generating functions as follows.
The Maslov index of a path $\{ L_t \}_{t \in [0, 1]}$
of linear Lagrangian submanifolds
of $(T^{\ast} \mathbb{R}^m, \omega_{\can})$
is defined by
\[
\nu \big( \{ L_t \}_{t \in [0, 1]} \big)
= \ind (Q_1) - \ind(Q_0) \,,
\]
where $Q_t$ is any 1-parameter family
of generating quadratic forms for $\{ L_t \}_{t \in [0, 1]}$
and where we denote by $\ind (Q)$
the index of a quadratic form $Q$.
The Maslov index $\nu (p)$
of a fixed point $p$
of a Hamiltonian symplectomorphism $\varphi$
of $(\mathbb{R}^{2n}, \omega_0)$
is then defined to be the Maslov index of the path
$p \mapsto L_{d\varphi_t(p)}$
of linear Lagrangian submanifolds
of $(T^{\ast}\mathbb{R}^{2n}, \omega_{\can})$,
where $\{\varphi_t\}_{t \in [0, 1]}$
is any Hamiltonian isotopy with $\varphi_1 = \varphi$.
If $F_t: E \rightarrow \mathbb{R}$
is a 1-parameter family of generating functions
for $\{\varphi_t\}$ then $d^2F_t \big(\Phi_{F_t}(p)\big)$,
where $\Phi_{F_t}$ denotes the diffeomorphism
\eqref{equation: diffeomorphism gf symplectic}
from $\mathbb{R}^{2n}$ to $\Sigma_{F_t}$,
is a 1-parameter family
of generating quadratic forms
for $\{ L_{d\varphi_t(p)} \}$,
and so
\[
\nu (p) =
\ind \Big( d^2F_1 \big(\Phi_{F_1}(p)\big) \Big)
- \ind \Big( d^2F_0 \big(\Phi_{F_0}(p)\big) \Big) \,.
\]
We can now prove the following result.

\begin{prop}\label{proposition: index}
Let $\varphi$ be a compactly supported
Hamiltonian symplectomorphism of $(\mathbb{R}^{2n}, \omega_0)$
with generating function quadratic at infinity $F$,
and consider the associated generating function
$F^{\sharp k}$ of $\varphi^k$.
Then for every fixed point $p$ of $\varphi^k$
we have
\[
\nu (p) = \ind
\Big( d^2F^{\sharp k} \big( \Phi_{F^{\sharp k}} (p) \big) \Big)
- k \, \ind (F_{\infty}) - n (k - 1) \,.
\]
\end{prop}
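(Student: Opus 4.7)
The plan is to apply the generating-function description of the Maslov index, recalled just before the statement, to the Hamiltonian isotopy $\{\varphi_t^k\}$ from $\id$ to $\varphi^k$, using the family of generating functions obtained by applying the $\sharp k$-construction of \autoref{proposition: composition formula} pointwise to a family $\{F_t\}$ connecting a pure quadratic to $F$.

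More precisely, I would first choose a Hamiltonian isotopy $\{\varphi_t\}_{t\in[0,1]}$ with $\varphi_0 = \id$ and $\varphi_1 = \varphi$. After stabilizing $F$ if necessary (which, as a routine check shows, does not affect either side of the claimed formula), we may assume that the quadratic at infinity part $Q$ depends only on the fibre variables, so that $Q\colon \mathbb{R}^N \to \mathbb{R}$ itself generates $\id$. By Sikorav's theorem (recalled before \autoref{proposition: Serre fibration}) together with \autoref{lemma: lemmas gf}(i) and (ii), one can then arrange a 1-parameter family $\{F_t\}$ of generating functions quadratic at infinity for $\{\varphi_t\}$ with $F_1 = F$ and $F_0 = Q$. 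The explicit formula of \autoref{proposition: composition formula} applied pointwise in $t$ yields a smooth family $\{F_t^{\sharp k}\}$ of generating functions quadratic at infinity for $\{\varphi_t^k\}$. Feeding this family into the generating-function formula for $\nu$ gives
\[
\nu(p) = \ind\bigl(d^2 F^{\sharp k}(\Phi_{F^{\sharp k}}(p))\bigr) - \ind\bigl(d^2 F_0^{\sharp k}(\Phi_{F_0^{\sharp k}}(p))\bigr),
\]
so the task is reduced to showing that the second term equals $k\,\ind(Q) + n(k-1)$.

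The key observation for the second term is that because $F_0 = Q$ depends only on $\zeta$, the formula of \autoref{proposition: composition formula} shows that $F_0^{\sharp k}$ is itself a quadratic form on $\mathbb{R}^{2nk}\times\mathbb{R}^{Nk}$:
\[
F_0^{\sharp k}(x_1, y_1, \dots, x_k, y_k, \zeta_1, \dots, \zeta_k) = \sum_{j=1}^k Q(\zeta_j) + A_k(x_1, y_1, \dots, x_k, y_k),
\]
where $A_k := \tfrac{1}{2}\sum_{j=1}^k (x_{j+1}y_j - x_j y_{j+1})$, with cyclic convention $(x_{k+1}, y_{k+1}) = (x_1, y_1)$. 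Its Hessian at every point coincides with $F_0^{\sharp k}$ itself, so $\ind\bigl(d^2 F_0^{\sharp k}(\Phi_{F_0^{\sharp k}}(p))\bigr) = k\,\ind(Q) + \ind(A_k)$, and everything reduces to the purely linear-algebraic identity $\ind(A_k) = n(k-1)$.

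The computation of $\ind(A_k)$ is the main (though elementary) obstacle. Since $A_k$ has no pure $xx$ or $yy$ terms, its symmetric matrix in the block basis $(x_1, \dots, x_k, y_1, \dots, y_k)$ (with each $x_j, y_j \in \mathbb{R}^n$) takes the anti-block form $\bigl(\begin{smallmatrix} 0 & C \\ C^{\top} & 0 \end{smallmatrix}\bigr)$, whose index equals the rank of $C$. The matrix $C$ splits as a direct sum of $n$ identical copies of a single $k\times k$ circulant matrix with first row $(0, -\tfrac{1}{2}, 0, \dots, 0, \tfrac{1}{2})$, whose eigenvalues $\tfrac{1}{2}(\omega^{-1}-\omega)$ for $\omega$ a $k$-th root of unity vanish only at $\omega = 1$. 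Each circulant block therefore has rank $k-1$, yielding $\ind(A_k) = n(k-1)$ as required.
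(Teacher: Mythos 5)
Your proof is correct and follows essentially the same route as the paper's: interpolate from $F$ down to its quadratic part $Q$ via a family $F_t$ with $F_1=F$, $F_0=Q$, apply the $\sharp k$-construction pointwise in $t$, and reduce to computing the index of $F_0^{\sharp k}=\sum_j Q(\zeta_j)+A_k$. The paper simply asserts that $\ind(A_k)=n(k-1)$, which you actually verify via the circulant/rank argument; the only point worth flagging is that your claim that the eigenvalues $\tfrac{1}{2}(\omega^{-1}-\omega)$ vanish solely at $\omega=1$ relies on the standing assumption from \autoref{proposition: composition formula} that $k$ is odd (for even $k$ the value $\omega=-1$ would also contribute a kernel direction).
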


\begin{proof}
Let $\{\varphi_t\}_{t \in [0, 1]}$
be a compactly supported Hamiltonian isotopy
with $\varphi_1 = \varphi$.
By Proposition \ref{proposition: Serre fibration},
there is a 1-parameter family $F_t$
of generating functions quadratic at infinity for $\{\varphi_t\}$
with $F_1 = F \oplus Q$ for a non-degenerate quadratic form $Q$.
By \autoref{proposition: composition formula},
$(F_t)^{\sharp k}$ is a 1-parameter family
of generating functions for $\{\varphi_t^k\}$,
thus by the above discussion we have
\[
\nu (p) = \ind
\Big( d^2 (F_1)^{\sharp k} \big( \Phi_{(F_1)^{\sharp k}} (p) \big) \Big)
- \ind
\Big( d^2 (F_0)^{\sharp k} \big( \Phi_{(F_0)^{\sharp k}} (p) \big) \Big) \,.
\]
But
\[
\ind
\Big( d^2 (F_1)^{\sharp k} \big( \Phi_{(F_1)^{\sharp k}} (p) \big) \Big)
= \ind
\Big( d^2 F^{\sharp k} \big( \Phi_{F^{\sharp k}} (p) \big) \Big)
+ k \, \ind (Q) \,,
\]
since $(F_1)^{\sharp k} = F^{\sharp k} \oplus Q^{\oplus k}$,
and
\[
\ind \Big( d^2 (F_0)^{\sharp k} \big( \Phi_{(F_0)^{\sharp k}} (p) \big) \Big)
= k \, \ind \big((F_0)_{\infty}\big) + n(k - 1) \,,
\]
since
\[
(F_0)^{\sharp k}
(x_1, y_1, \dots, x_k, y_k, \zeta_1, \dots, \zeta_k)
= \sum_{j=1}^k F_0 (x_j, y_j, \zeta_j)
+ \sum_{j=1}^k \frac{1}{2} (x_j y_{j+1} - x_{j+1}y_j) \,,
\]
the index of the quadratic form
$\sum_{j=1}^k \frac{1}{2} (x_j y_{j+1} - x_{j+1}y_j)$
is $n(k - 1)$,
and $F_0 = Q_0 \circ \Phi$ for a fibre preserving diffeomorphism $\Phi$
and a quadratic form $Q_0$ with $\ind (Q_0) = \ind \big( (F_0)_{\infty} \big)$.
Thus
\begin{align*}
    \nu (p) &= \ind
\Big( d^2 F^{\sharp k} \big( \Phi_{F^{\sharp k}} (p) \big) \Big)
- k \Big( \ind \big( (F_0)_{\infty} \big) - \ind (Q) \Big)
- n (k - 1)\\
&= \ind
\Big( d^2F^{\sharp k} \big( \Phi_{F^{\sharp k}} (p) \big) \Big)
- k \, \ind (F_{\infty}) - n (k - 1) \,,
\end{align*}
where for the last equality we have used
that $\ind \big( (F_0)_{\infty} \big) = \ind \big( (F_1)_{\infty} \big)$,
since $(F_t)_{\infty}$ is a 1-parameter family of non-degenerate quadratic forms,
and
\[
\ind \big( (F_1)_{\infty} \big) = \ind (F_{\infty}) + \ind (Q) \,. \qedhere
\]
\end{proof}

Consider the change of variables
$A$ of $\mathbb{R}^{2nk} \times \mathbb{R}^{Nk}$
defined by
\[
A (x_1, y_1, \dots, x_k, y_k, \zeta_1, \dots, \zeta_k)
\]
\[
= \Big(\, \frac{1}{k} \, \sum_{j=1}^k x_j \,,\, \frac{1}{k} \, \sum_{j=1}^k y_j \,,\,
\frac{1}{k} \, (x_2 - x_1) \,,\, \frac{1}{k} \, (y_2 - y_1) \,,\, \dots \,,\,
\frac{1}{k} \, (x_k - x_1) \,,\, \frac{1}{k} \, (y_k - y_1) \, ,
\zeta_1, \dots, \zeta_k \,\Big) \,.
\]
Observe that
\[
A \circ \sigma_k \circ A^{-1}
(x_1, y_1, \dots, x_k, y_k, \zeta_1, \dots, \zeta_k)
\]
\[
= (x_1, y_1, x_3 - x_2, y_3 - y_2, \dots, x_k - x_2, y_k - y_2, -x_2, -y_2,
\zeta_2, \dots, \zeta_k, \zeta_1) \,,
\]
thus $A \circ \sigma_k \circ A^{-1}$
extends to a map $\overline{\sigma_k}$
generating a $\mathbb{Z}_k$-action
on $S^{2n} \times \mathbb{R}^{2n(k-1)} \times \mathbb{R}^{Nk}$.

\begin{prop}\label{proposition: extend to sphere}
Let $F: \mathbb{R}^{2n} \times \mathbb{R}^N \rightarrow \mathbb{R}$
be a generating function quadratic at infinity
of a compactly supported Hamiltonian symplectomorphism
of $(\mathbb{R}^{2n}, \omega_0)$,
and consider the associated function
$F^{\sharp k}: \mathbb{R}^{2nk} \times \mathbb{R}^{Nk} \rightarrow \mathbb{R}$.
Then $F^{\sharp k} \circ A^{-1}$ can be extended
to a function
\[
\overline{F^{\sharp k}}: S^{2n} \times \mathbb{R}^{2n(k-1)} \times \mathbb{R}^{Nk}
\rightarrow \mathbb{R} \,,
\]
invariant by the $\mathbb{Z}_k$-action
generated by $\overline{\sigma_k}$.
Moreover, if $F$ is special
then $\overline{F^{\sharp k}}$ is quadratic at infinity
with respect to the vector bundle
$S^{2n} \times \mathbb{R}^{2n(k-1)} \times \mathbb{R}^{Nk}
\rightarrow S^{2n}$.
\end{prop}

\begin{proof}
We have
\[
F^{\sharp k} \circ A^{-1}
(x_1, y_1, \dots, x_k, y_k, \zeta_1, \dots, \zeta_k)
\]
\[
= F \Big( x_1 + \frac{k-2}{2} \, x_2 - \sum_{l \neq 2} x_l \,,\,
y_1 + \frac{k-2}{2} \, y_2 - \sum_{l \neq 2} y_l \,,\, \zeta_1 \Big)
\]
\[
+ \sum_{j=2}^{k-1}
F \Big( x_1 + \frac{k-2}{2} \, x_j + \frac{k-2}{2} \, x_{j+1} - \sum_{l \neq j, j+1} x_l \,,\,
y_1 + \frac{k-2}{2} \, y_j + \frac{k-2}{2} \, y_{j+1} - \sum_{l \neq j, j+1} y_l \,,\, \zeta_j \Big)
\]
\[
+ F \Big( x_1 + \frac{k-2}{2} \, x_k - \sum_{l \neq k} x_l \,,\,
y_1 + \frac{k-2}{2} \, y_k - \sum_{l \neq k} y_k \,,\, \zeta_k \Big)
+ \sum_{j=2}^{k-1} \frac{k^2}{2} (x_j y_{j+1} - x_{j+1}y_j) \,,
\]
where the index $l$ is always in the set $\{2, \dots, k\}$.
The function $F^{\sharp k} \circ A^{-1}$ extends to a function
$\overline{F^{\sharp k}}$
on $S^{2n} \times \mathbb{R}^{2n(k-1)} \times \mathbb{R}^{Nk}$
by setting
\[
\overline{F^{\sharp k}} \, (p_{\infty}, x_2, y_2, \dots, x_k, y_k, \zeta_1, \dots, \zeta_k)
= \sum_{j=1}^k \overline{F} (p_{\infty}, \zeta_j)
+ \sum_{j=2}^{k-1} \frac{k^2}{2} (x_j y_{j+1} - x_{j+1}y_j) \,.
\]
By definition,
$\overline{F^{\sharp k}}$ is invariant
by the $\mathbb{Z}_k$-action generated by $\overline{\sigma_k}$.
Suppose now that $F$ is special,
and consider the quadratic form $Q$
on $S^{2n} \times \mathbb{R}^{2n(k-1)} \times \mathbb{R}^{Nk}$
defined by
\[
Q (p, x_2, y_2, \dots, x_k, y_k, \zeta_1, \dots, \zeta_k)
= \sum_{j=1}^k F_{\infty} (\zeta_j)
+ \sum_{j=2}^{k-1} \frac{k^2}{2} (x_j y_{j+1} - x_{j+1}y_j)  \,.
\]
Since $\lVert d_1F \rVert$, $\lVert d_2F \rVert$
and $\lVert \frac{\partial}{\partial \zeta} (F - F_{\infty}) \rVert$
are bounded,
$\partial_v ( \overline{F^{\sharp k}} - Q )$ is bounded
and thus $\overline{F^{\sharp k}}$ is quadratic at infinity.
\end{proof}

Finally we state the following proposition,
whose proof is immediate.

\begin{prop}\label{proposition: stabilization etc Fk}
\begin{itemize}
\item[(i)]
Let $F: \mathbb{R}^{2n} \times \mathbb{R}^N \rightarrow \mathbb{R}$
be a generating function quadratic at infinity
of a compactly supported Hamiltonian symplectomorphism
of $(\mathbb{R}^{2n}, \omega_0)$.
Then for every non-degenerate quadratic form
$Q: \mathbb{R}^{N'} \rightarrow \mathbb{R}$
we have
\[
\overline{(F \oplus Q)^{\sharp k}}
= \overline{F^{\sharp k}} \oplus Q^{\oplus k} \,.
\]
\item[(ii)]
Let $F_0: \mathbb{R}^{2n} \times \mathbb{R}^N \rightarrow \mathbb{R}$
and $F_1: \mathbb{R}^{2n} \times \mathbb{R}^N \rightarrow \mathbb{R}$
be generating functions quadratic at infinity
of a compactly supported Hamiltonian symplectomorphism
of $(\mathbb{R}^{2n}, \omega_0)$
such that $\overline{F_0} = \overline{F_1} \circ \overline{\Phi}$
for a fibre preserving diffeomorphism $\overline{\Phi}$
of $S^{2n} \times \mathbb{R}^N$.
Denote by $\Phi$ the induced fibre preserving diffeomorphism
of $\mathbb{R}^{2n} \times \mathbb{R}^N$,
so that $F_0 = F_1 \circ \Phi$.
Then
\[
(F_0)^{\sharp k} = (F_1)^{\sharp k} \circ \Phi_k \,,
\]
where $\Phi_k$ is the diffeomorphism
\[
(x_1, y_1, \dots, x_k, y_k, \zeta_1, \dots, \zeta_k)
\mapsto
(x_1, y_1, \dots, x_k, y_k, \zeta_1', \dots, \zeta_k')
\]
of $\mathbb{R}^{2nk} \times \mathbb{R}^{Nk}$,
equivariant with respect to the $\mathbb{Z}_k$-action
generated by $\sigma_k$,
defined by
\[
\Big( \frac{x_j + x_{j+1}}{2}, \frac{y_j + y_{j+1}}{2}, \zeta_j' \Big)
= \Phi \Big( \frac{x_j + x_{j+1}}{2}, \frac{y_j + y_{j+1}}{2}, \zeta_j \Big) \,.
\]
Moreover, $A \circ \Phi_k \circ A^{-1}$
extends to a fibre preserving diffeomorphism $\overline{\Phi_k}$
of $S^{2n} \times \mathbb{R}^{2n(k-1)} \times \mathbb{R}^{Nk}$,
equivariant with respect to the $\mathbb{Z}_k$-action
generated by $\overline{\sigma_k}$,
and we have
\[
\overline{(F_0)^{\sharp k}} = \overline{(F_1)^{\sharp k}} \circ \overline{\Phi_k} \,.
\]
\end{itemize}
\end{prop}


\section{\texorpdfstring{$\mathbb{Z}_k$}{}-equivariant generating function homology for domains of \texorpdfstring{$(\mathbb{R}^{2n}, \omega_0)$}{}}\label{section: equivariant symplectic homology}

As discussed in the introduction
(\autoref{remark: category theoretic construction}),
in order to define
the $\mathbb{Z}_k$-equivariant generating function homology
of domains of $(\mathbb{R}^{2n}, \omega_0)$
we first define
the $\mathbb{Z}_k$-equivariant homology
of compactly supported Hamiltonian isotopies
and then consider the limit,
in a certain sense,
over all Hamiltonian isotopies supported in a given domain.

Let $\{\varphi_t\}_{t \in [0, 1]}$
be a compactly supported Hamiltonian isotopy
of $(\mathbb{R}^{2n}, \omega_0)$.
Its $\mathbb{Z}_k$-equivariant homology
is defined by the following category theoretical construction.
By \autoref{lemma: special},
there exists a 1-parameter family
of special generating functions quadratic at infinity
$F_t: \mathbb{R}^{2n} \times \mathbb{R}^N \rightarrow \mathbb{R}$
for $\{\varphi_t\}$
such that $F_0$ is a non-degenerate quadratic form.
Moreover, by \autoref{lemma: lemmas gf} (iii),
for any two such 1-parameter families
$F_t^{(0)}$ and $F_t^{(1)}$
there are non-degenerate quadratic forms $Q_0$ and $Q_1$
and a 2-parameter family $\overline{\Phi_{s, t}}$
of fibre preserving diffeomorphisms of $S^{2n} \times \mathbb{R}^N$
such that $\overline{\Phi_{0, t}} = \id$,
$\overline{F_t^{(0)}} \oplus Q_0
= ( \overline{F_t^{(1)}} \oplus Q_1) \circ \overline{\Phi_{1, t}}$,
and $(\overline{F_t^{(0)}} \oplus Q_0) \circ \Phi_{s, 0}^{-1}$
is a contractible loop.
We consider the category
$\mathcal{F} \big( \{\varphi_t\}\big)$
whose objects are the 1-parameter families
$F_t: \mathbb{R}^{2n} \times \mathbb{R}^N \rightarrow \mathbb{R}$
of special generating functions quadratic at infinity for $\{\varphi_t\}$
such that $F_0$ is a non-degenerate quadratic form,
and whose morphisms $F_t^{(0)} \rightarrow F_t^{(1)}$
are the triples $(Q_0, Q_1, \overline{\Phi})$
with $Q_0$ and $Q_1$ non-degenerate quadratic forms
and $\overline{\Phi} = \overline{\Phi_{s, t}}$ a 2-parameter family
of fibre preserving diffeomorphisms of $S^{2n} \times \mathbb{R}^N$
such that $\overline{\Phi_{0, t}} = \id$,
$\overline{F_t^{(0)}} \oplus Q_0
= ( \overline{F_t^{(1)}} \oplus Q_1) \circ \overline{\Phi_{1, t}}$,
and $(\overline{F_t^{(0)}} \oplus Q_0) \circ \Phi_{s, 0}^{-1}$
is a contractible loop.
We define moreover the composition of two morphisms
$(Q_0, Q_1, \overline{\Phi}): F_t^{(0)} \rightarrow F_t^{(1)}$
and $(Q_1', Q_2', \overline{\Psi}): F_t^{(1)} \rightarrow F_t^{(2)}$
to be the morphism
\begin{equation}\label{equation: composition}
(Q_1', Q_2', \overline{\Psi}) \circ (Q_0, Q_1, \overline{\Phi})
= \big( \, Q_0 \oplus Q_1' \,,\, Q_2' \oplus Q_1 \,,\,
\overline{\Psi'} \circ l_{Q_1, Q_1'} \circ \overline{\Phi'} \, \big):
F_t^{(0)} \rightarrow F_t^{(2)} \,,
\end{equation}
where,
denoting the domains of $F_t^{(1)}$, $Q_1$ and $Q_1'$
by $E$, $E_1$ and $E_1'$ respectively,
\[
\overline{\Phi'} = (\overline{\Phi}, \id):
(E \oplus E_1) \oplus E_1' \rightarrow (E \oplus E_1) \oplus E_1' \,,
\]
\[
\overline{\Psi'} = (\overline{\Psi}, \id):
(E \oplus E_1') \oplus E_1 \rightarrow (E \oplus E_1') \oplus E_1 \,,
\]
and $l_{Q_1, Q_1'}: E \oplus E_1 \oplus E_1' \rightarrow E \oplus E_1' \oplus E_1$
is the fibre preserving linear diffeomorphism
that switches the second and third blocks of coordinates.

For $a \leq b$ in $\mathbb{R} \cup \{ \pm \infty \}$
that are not in the action spectrum of $(\varphi_1)^k$
we define a functor $G_{\mathbb{Z}_k, \ast}^{(a,b]}$
from $\mathcal{F} \big(\{\varphi_t\}\big)$
to the category of graded modules over $\mathbb{Z}_k$
by posing
\[
G_{\mathbb{Z}_k, \ast}^{(a,b]} (F_t)
= H_{\mathbb{Z}_k, \ast + k\ind((F_1)_{\infty}) + n(k-1)}
\big( \{ \overline{(F_1)^{\sharp k}} \leq b \} \,,\,
\{ \overline{(F_1)^{\sharp k}} \leq a \}  ; \mathbb{Z}_k \big)
\]
(the equivariant singular relative homology of the pair
$( \{ \overline{(F_1)^{\sharp k}} \leq b \} \,,\, \{ \overline{(F_1)^{\sharp k}} \leq a \} )$
with coefficients in $\mathbb{Z}_k$),
with the convention that
$\overline{( F_1 )^{\sharp k}} \leq \infty$ means
$\overline{( F_1 )^{\sharp k}} \leq c$
for $c$ bigger than all the actions of fixed points of $(\varphi_1)^k$
and $\overline{( F_1 )^{\sharp k}} \leq -\infty$
means $\overline{( F_1 )^{\sharp k}} \leq c$
for $c$ smaller than all the actions of fixed points of $(\varphi_1)^k$,
and by associating to any morphism
$(Q_0, Q_1, \overline{\Phi}): F_t^{(0)} \rightarrow F_t^{(1)}$
a homomorphism
\[
G_{\mathbb{Z}_k, \ast}^{(a,b]} (Q_0, Q_1, \overline{\Phi}):
G_{\mathbb{Z}_k, \ast}^{(a,b]} (F_t^{(1)})
\rightarrow  G_{\mathbb{Z}_k, \ast}^{(a,b]} (F_t^{(0)})
\]
as follows.
Note first that $(Q_0, Q_1, \overline{\Phi})$
can be written as the composition
\[
F_t^{(0)} \xrightarrow{ (Q_0, 0, \id) } F_t^{(0)} \oplus Q_0
\xrightarrow{ (0, 0, \overline{\Phi}) } F_t^{(1)} \oplus Q_1
\xrightarrow{ (0, Q_1, \id) } F_t^{(1)} \,.
\]
By \autoref{proposition: stabilization etc Fk} (i),
$\overline{(F_t^{(1)} \oplus Q_1)^{\sharp k}}
= \overline{(F_t^{(1)})^{\sharp k}} \oplus (Q_1)^{\oplus k}$.
We define
\[
G_{\mathbb{Z}_k, \ast}^{(a,b]} (0, Q_1, \id):
G_{\mathbb{Z}_k, \ast}^{(a,b]} (F_t^{(1)})
\rightarrow  G_{\mathbb{Z}_k, \ast}^{(a,b]} (F_t^{(1)} \oplus Q_1)
\]
to be (similarly to \cite[3.6]{Traynor}
and \cite[Lemma 3.8]{S - equivariant})
the isomorphism
\[
H_{\mathbb{Z}_k, \ast + k\ind ((F^{(1)}_1)_{\infty})+ n(k-1)}
\big( \{ \overline{(F^{(1)}_1)^{\sharp k}} \leq b \} \,,\,
\{ \overline{(F^{(1)}_1)^{\sharp k}} \leq a \}  ; \mathbb{Z}_k \big)
\rightarrow
\]
\[
H_{\mathbb{Z}_k, \ast + k \ind ((F^{(1)}_1)_{\infty}) + n(k-1) + k \ind (Q_1)}
\big( \{ (\overline{F^{(1)}_1)^{\sharp k}} \oplus (Q_1)^{\oplus k}\leq b \} \,,\,
\{ \overline{(F^{(1)}_1)^{\sharp k}} \oplus (Q_1)^{\oplus k} \leq a \}  ; \mathbb{Z}_k \big)
\]
induced by the $\mathbb{Z}_k$-equivariant Thom isomorphism
on the $\mathbb{Z}_k$-invariant vector bundle on which $(Q_1)^{\oplus k}$
is negative definite.
Similarly we define
\[
G_{\mathbb{Z}_k, \ast}^{(a,b]} (Q_0, 0, \id):
G_{\mathbb{Z}_k, \ast}^{(a,b]} (F_t^{(0)} \oplus Q_0)
\rightarrow  G_{\mathbb{Z}_k, \ast}^{(a,b]} (F_t^{(0)})
\]
to be the inverse of the isomorphism
induced by the $\mathbb{Z}_k$-equivariant Thom isomorphism
on the $\mathbb{Z}_k$-invariant vector bundle on which $(Q_0)^{\oplus k}$
is negative definite.
Since $\overline{F_1^{(0)}} \oplus Q_0
= (\overline{F_1^{(1)}} \oplus Q_1) \circ \overline{\Phi_{1, 1}}$,
by \autoref{proposition: stabilization etc Fk} (ii) we have
\[
\overline{ (F_1^{(0)} \oplus Q_0)^{\sharp k} }
= \overline{ (F_1^{(1)} \oplus Q_1)^{\sharp k} } \circ \overline{(\Phi_{1,1})_k} \,.
\]
We define
\[
G_{\mathbb{Z}_k, \ast}^{(a,b]} (0, 0, \overline{\Phi}):
G_{\mathbb{Z}_k, \ast}^{(a,b]} (F_t^{(1)} \oplus Q_1)
\rightarrow  G_{\mathbb{Z}_k, \ast}^{(a,b]} (F_t^{(0)} \oplus Q_0)
\]
to be the inverse of the isomorphism induced by $\overline{(\Phi_{1,1})_k}$.
Finally we define
\[
G_{\mathbb{Z}_k, \ast}^{(a,b]} (Q_0, Q_1, \overline{\Phi})
= G_{\mathbb{Z}_k, \ast}^{(a,b]} (Q_0, 0, \id)
\circ G_{\mathbb{Z}_k, \ast}^{(a,b]} (0, 0, \overline{\Phi})
\circ G_{\mathbb{Z}_k, \ast}^{(a,b]} (0, Q_1, \id) \,,
\]
and remark that this homomorphism
is by construction always an isomorphism.

Given two morphisms
$(Q_0, Q_1, \overline{\Phi}): F_t^{(0)} \rightarrow F_t^{(1)}$
and $(Q_1', Q_2', \overline{\Psi}): F_t^{(1)} \rightarrow F_t^{(2)}$,
naturality of the $\mathbb{Z}_k$-equivariant Thom isomorphism
implies that
\[
G_{\mathbb{Z}_k, \ast}^{(a,b]}
\big( (Q_1', Q_2', \overline{\Psi}) \circ (Q_0, Q_1, \overline{\Phi})\big)
= G_{\mathbb{Z}_k, \ast}^{(a,b]} (Q_0, Q_1, \overline{\Phi})
\circ G_{\mathbb{Z}_k, \ast}^{(a,b]} (Q_1', Q_2', \overline{\Psi}) \,.
\]
The functor $G_{\mathbb{Z}_k,*}^{(a,b]}$
is thus well-defined.

We now define the $\mathbb{Z}_k$-equivariant homology
of the Hamiltonian isotopy $\{ \varphi_t \}_{t \in [0, 1]}$ by
\[
G_{\mathbb{Z}_k,*}^{(a,b]} \big(\{\varphi_t\}\big)
= \underset{}{\varprojlim} \;
\Big\{ G_{\mathbb{Z}_k,*}^{(a,b]} (F_t)
\Big\}_{ F_t \in \mathcal{F} (\{\varphi_t\})} \,.
\]
By definition of the limit,
for every object $F_t$ in $\mathcal{F} \big(\{\varphi_t\}\big)$
there exists a natural homomorphism
$i_{F_t}: G_{\mathbb{Z}_k,*}^{(a,b]} \big(\{\varphi_t\}\big)
\rightarrow G_{\mathbb{Z}_k,*}^{(a,b]} (F_t)$,
which is functorial with respect to $F_t$.
This homomorphism is in fact an isomorphism,
as shown in the next proposition.

\begin{prop}\label{proposition: commuting isomorphism}
For every object $F_t$ in $\mathcal{F} \big(\{\varphi_t\}\big)$ the natural homomorphism
\[
i_{F_t}: G_{\mathbb{Z}_k,*}^{(a,b]} \big(\{\varphi_t\}\big)
\rightarrow G_{\mathbb{Z}_k,*}^{(a,b]} (F_t) \,
\]
is an isomorphism.
\end{prop}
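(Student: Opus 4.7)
The plan is to establish three properties of the diagram $F_t \mapsto G_{\mathbb{Z}_k,*}^{(a,b]}(F_t)$ over $\mathcal{F}(\{\varphi_t\})$: (a) the indexing category is connected; (b) every morphism is sent to an isomorphism; (c) any two parallel morphisms are sent to the same homomorphism. Granting (a)--(c), an inverse to $i_{F_t}$ is built by transporting an element $x \in G_{\mathbb{Z}_k,*}^{(a,b]}(F_t)$ along an arbitrary zigzag from $F_t$ to each other object $F'_t$: existence of such a zigzag follows from (a), well-definedness of the transport from (b), and independence from the chosen zigzag (hence compatibility with the remaining morphisms of the category) from (c).

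Property (b) is immediate from the definitions: $[Q]$ and $\overline{[Q]}$ go to a $\mathbb{Z}_k$-equivariant Thom isomorphism and its inverse, while $[\Phi]$ goes to the isomorphism on relative homology induced by the $\mathbb{Z}_k$-equivariant diffeomorphism $(\Phi_{1,1})_k$. For (a), given objects $F_t^{(0)}$ and $F_t^{(1)}$, one first stabilizes by suitable non-degenerate quadratic forms $Q_0$ and $Q_1$ so that $F_t^{(0)} \oplus Q_0$ and $F_t^{(1)} \oplus Q_1$ have fibres of the same dimension and coincide as quadratic forms at $t = 0$; \autoref{lemma: lemmas gf}(iii) then produces a 2-parameter family $\Phi_{s,t}$ of fibre-preserving diffeomorphisms with $\Phi_{0,t} = \id$ realizing a morphism $[\Phi]$ between the two stabilizations, which combined with the stabilization morphisms $[Q_0]$ and $[Q_1]$ yields a zigzag from $F_t^{(0)}$ to $F_t^{(1)}$.

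The main obstacle is (c). By adjoining common further stabilizations and exploiting the naturality of the Thom isomorphism with respect to fibre-preserving diffeomorphisms, one should reduce to comparing two morphisms $[\Phi^{(0)}], [\Phi^{(1)}]: F_t^{(0)} \to F_t^{(1)}$ realized by 2-parameter families $\Phi^{(i)}_{s,t}$ with $\Phi^{(i)}_{0,t} = \id$ and $F_t^{(1)} = F_t^{(0)} \circ \Phi^{(i)}_{1,t}$ for $i=0,1$. Combining the homotopy lifting property of \autoref{proposition: Serre fibration}, applied to the constant path of Lagrangian isotopies $\{\im \Gamma_{\varphi_t}\}$, with the parametric uniqueness of \autoref{lemma: lemmas gf}(ii)-(iii) (in its 2-parameter extension), one produces a 3-parameter family $\Phi^{(u)}_{s,t}$ interpolating between $\Phi^{(0)}_{s,t}$ and $\Phi^{(1)}_{s,t}$ and still satisfying $F_t^{(1)} = F_t^{(0)} \circ \Phi^{(u)}_{1,t}$ for every $u$. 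The map $u \mapsto \Phi^{(u)}_{1,1}$ is then a homotopy of fibre-preserving diffeomorphisms conjugating $F_1^{(1)}$ to $F_1^{(0)}$, and since the $\sharp k$ construction is functorial under such fibrewise substitutions, $u \mapsto (\Phi^{(u)}_{1,1})_k$ is a $\mathbb{Z}_k$-equivariant homotopy between $(\Phi^{(0)}_{1,1})_k$ and $(\Phi^{(1)}_{1,1})_k$ that carries the sublevel-set pair of $(F_1^{(1)})^{\sharp k}$ onto that of $(F_1^{(0)})^{\sharp k}$. Consequently the two induced maps on $\mathbb{Z}_k$-equivariant homology coincide, establishing (c); the rest of the argument is formal diagram-chasing.
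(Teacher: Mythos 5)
Your proposal is correct and follows essentially the same route as the paper: reduce to showing that any two objects are connected by a morphism (via \autoref{lemma: lemmas gf}(iii) after stabilization), that all induced maps are isomorphisms, and that parallel morphisms agree, with the last point handled by reducing (via naturality of the Thom isomorphism) to two type-(2) morphisms and then using \autoref{proposition: Serre fibration} together with \autoref{lemma: lemmas gf}(ii) to produce a $\mathbb{Z}_k$-equivariant homotopy between $(\Phi^{(0)}_{1,1})_k$ and $(\Phi^{(1)}_{1,1})_k$ on the relevant sublevel-set pairs. The paper packages this last homotopy slightly differently (through a family $\chi_{s,u}$ of fibre-preserving diffeomorphisms built from lifted paths of generating functions rather than a direct interpolation $\Phi^{(u)}_{s,t}$), but the geometric input and the conclusion are the same.
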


\begin{proof}
The result follows from the fact that
any two objects $F_t^{(0)}$ and $F_t^{(1)}$
in $\mathcal{F} (\{\varphi_t\})$
are related by a morphism,
and all the morphisms between them
induce the same isomorphism
from $G_{\mathbb{Z}_k, \ast}^{(a,b]} (F_t^{(1)})$
to $G_{\mathbb{Z}_k, \ast}^{(a,b]} (F_t^{(0)})$.
The first statement follows
by \autoref{lemma: lemmas gf} (iii).
For the second,
suppose that $(Q_0, Q_1, \overline{\Phi})$ and $(Q_0', Q_1', \overline{\Psi})$
are two morphisms from $F_t^{(0)}$ to $F_t^{(1)}$.
Showing that they induce the same isomorphism
is equivalent to showing that
$G_{\mathbb{Z}_k,*}^{(a,b]} (Q_0, Q_1, \overline{\Phi})
\circ G_{\mathbb{Z}_k,*}^{(a,b]} (Q_0', Q_1', \overline{\Psi})^{-1}$
is the identity.
But $G_{\mathbb{Z}_k,*}^{(a,b]} (Q_0, Q_1, \overline{\Phi})
\circ G_{\mathbb{Z}_k,*}^{(a,b]} (Q_0', Q_1', \overline{\Psi})^{-1}$
is the isomorphism induced by the morphism
$(Q_1', Q_0', \overline{\Psi}^{-1}) \circ (Q_0, Q_1, \overline{\Phi})$
from $F_t^{(0)}$ to itself.
It is thus enough to show that for any morphism
$(Q_0, Q_1, \overline{\Phi})$ from an object $F_t$ to itself
$G_{\mathbb{Z}_k,*}^{(a,b]} (Q_0, Q_1, \overline{\Phi})$
is the identity.
To see this,
observe first that,
since $\overline{F_t} \oplus Q_0
= (\overline{F_t} \oplus Q_1) \circ \overline{\Phi_{1, t}}$,
$Q_0$ and $Q_1$ have the same index.
Thus $Q_1 = Q_0 \circ l$
for an orientation preserving linear diffeomorphism $l$,
and so
\[
\overline{F_t} \oplus Q_0
= (\overline{F_t} \oplus Q_0) \circ (\id, l) \circ \overline{\Phi_{1, t}} \,.
\]
Thus it is enough to show that
for any morphism of the form $(0, 0, \overline{\Phi})$
from an object $F_t$ to itself
$G_{\mathbb{Z}_k,*}^{(a,b]} (0, 0, \overline{\Phi})$
is the identity.
This can be seen as follows.
Let $F_t^{(s)} = F_t \circ \Phi_{s,t}^{-1}$.
Since the loop $\overline{F_1^{(s)}}$ is contractible
through the family $\overline{F_t^{(s)}}$
followed by the contraction of the loop $\overline{F_0^{(s)}}$,
there is a 1-parameter family of loops $\overline{G_u^{(s)}}$
such that $\overline{G_0^{(s)}} = \overline{F_1^{(s)}}$,
$\overline{G_1^{(s)}} = \overline{F_1}$ and, for every $s$,
$\overline{G_u^{(s)}}$ is a lift to the space of generating functions
of the (contractible) loop based at $\varphi_1$
obtained by going from $\varphi_1$
to some $\varphi_{t(s)}$ along $\{\varphi_t\}$
and then back to $\varphi_1$.
By \autoref{proposition: Serre fibration},
the contraction of this 1-parameter family of loops
to the 1-parameter family of constant loops
can be lifted to the space of generating functions.
We thus obtain a 1-parameter family of loops $\overline{I_u^{(s)}}$
of generating functions quadratic at infinity of $\varphi_1$
from $\overline{I_0^{(s)}} = \overline{F_1}$
to $\overline{I_1^{(s)}} = \overline{F_1^{(s)}}$.
By \autoref{lemma: lemmas gf} (ii),
there is a 2-parameter family $\overline{\chi_{s, u}}$
of fibre preserving diffeomorphisms
with $\overline{\chi_{s, 0}} = \id$
and $\overline{I_u^{(s)}} \circ \overline{\chi_{s,u}} = \overline{F_1}$.
By \autoref{proposition: stabilization etc Fk} (ii)
we have $\overline{(I_u^{(s)})^{\sharp k}} \circ \overline{(\chi_{s,u})_{k}}
= \overline{(F_1)^{\sharp k}}$.
The restriction to
$\big( \{ \overline{(F_1)^{\sharp k}} \leq b \} \,,\,
\{ \overline{(F_1)^{\sharp k}} \leq a \} \big)$
of $(\overline{\chi_{s, 1})_k}^{\,-1} \circ \overline{(\Phi_{s, 1})_k}$
is a $\mathbb{Z}_k$-equivariant homotopy
from $(\overline{\chi_{0, 1})_k}^{\,-1}$ to
$(\overline{\chi_{1, 1})_k}^{\,-1} \circ \overline{(\Phi_{1, 1})_k}$.
Since the restriction to
$\big( \{ \overline{(F_1)^{\sharp k}} \leq b \} \,,\,
\{ \overline{(F_1)^{\sharp k}} \leq a \} \big)$
of $(\overline{\chi_{0, 1})_k}^{\,-1}$ and $(\overline{\chi_{1, 1})_k}^{\,-1}$
are homotopic to the identity by a $\mathbb{Z}_k$-equivariant homotopy,
we conclude that the restriction to
$\big( \{ \overline{(F_1)^{\sharp k}} \leq b \} \,,\,
\{ \overline{(F_1)^{\sharp k}} \leq a \} \big)$
of $\overline{(\Phi_{1, 1})_k}$
is homotopic to the identity by a $\mathbb{Z}_k$-equivariant homotopy,
and thus $\overline{(\Phi_{1, 1})_k}$ induces the identity.
\end{proof}

\begin{rmk}
A category theoretical definition similar to the one
for $G_{\mathbb{Z}_k, \ast}^{(a, b]} \big(\{\varphi_t\}\big)$
in principle can be given
also for single Hamiltonian symplectomorphisms,
but in the case of Hamiltonian symplectomorphisms
it is not be clear to us
that the analogue
of \autoref{proposition: commuting isomorphism}
would hold.
\end{rmk}

We now show that the homology groups 
$G_{\mathbb{Z}_k, \ast}^{(a, b]} \big(\{\varphi_t\}\big)$
are invariant by conjugation,
in the sense that every compactly supported Hamiltonian isotopy
$\{\psi_t\}_{t \in [0, 1]}$
induces an isomorphism
from $G_{\mathbb{Z}_k, \ast}^{(a, b]} \big(\{\varphi_t\}\big)$
to $G_{\mathbb{Z}_k, \ast}^{(a, b]}
\big( \{ \psi_1 \circ \varphi_t \circ \psi_1^{-1} \} \big)$
as follow.
By \autoref{lemma: special},
there is a 2-parameter family
$F_t^{(s)}: \mathbb{R}^{2n} \times \mathbb{R}^N \rightarrow \mathbb{R}$
of special generating functions quadratic at infinity
for the 2-parameter family of Hamiltonian symplectomorphisms
$\{\psi_s \circ \varphi_t \circ \psi_s^{-1} \}$
such that $F_0^{(s)}$ is a non-degenerate quadratic form
for every $s$
and $(F_t^{(s)})_{\infty}$ does not depend on $s$ and $t$.
The critical points of $(F_1^{(s)})^{\sharp k}$
are in 1--1 correspondence with the fixed points of
$(\psi_s \circ \varphi_t \circ \psi_s^{-1})^k
= \psi_s \circ (\varphi_1)^k \circ \psi_s^{-1}$,
and the critical values are given by the symplectic actions.
Since the action spectrum is invariant by conjugation,
the set of critical values of $(F_1^{(s)})^{\sharp k}$
is thus independent of $s$.
If we assume that $a$ and $b$ are regular values of $(F_1^{(0)})^{\sharp k}$,
so that $G_{\mathbb{Z}_k, \ast}^{(a, b]} \big(\{\varphi_t\}\big)$
is defined,
we then conclude that $a$ and $b$
are regular values of $(F_1^{(s)})^{\sharp k}$ for all $s$.
In particular,
$G_{\mathbb{Z}_k, \ast}^{(a, b]} \big( \{ \psi_1 \circ \varphi_t \circ \psi_1^{-1} \} \big)$
is also defined.
Moreover,
we can apply to the family of functions $\overline{(F_1^{(s)})^{\sharp k}}$
the following $\mathbb{Z}_k$-equivariant version
of \cite[Lemma 2.17]{San11} and \cite[Lemma 4.14]{lens}
(which is written more generally for time dependent regular values
for later purposes).

\begin{lemma}\label{lemma: continuation}
Let $B$ be a compact manifold
and $f_t: E = B \times \mathbb{R}^N \rightarrow \mathbb{R}$,
for $t \in [0, 1]$,
a 1-parameter family of functions
that are invariant with respect to a $\mathbb{Z}_k$-action on $E$.
Fix a complete $\mathbb{Z}_k$-invariant Riemannian metric on $E$.
Suppose that $\dot{f}_t$ is bounded,
that the norm of the gradient of each $f_t$
is bounded away from zero outside a compact set,
and that $a_t$ and $b_t$ are 1-parameter families of real numbers
such that, for every $t$, $a_t$ and $b_t$ are regular values of $f_t$.
Then there is a $\mathbb{Z}_k$-equivariant isotopy
$\{\theta_t\}$ of $E$
such that $\theta_t \big( \{ f_0  \leq a_0 \} \big) = \{ f_t  \leq a_t \}$
and $\theta_t \big( \{ f_0  \leq b_0 \} \big) = \{ f_t  \leq b_t \}$.
\end{lemma}

\begin{proof}
We follow the proof of \cite[Lemma 4.14]{lens},
adapting it to our situation.
Consider the open subset
$\mathcal{O} = \big\{\, (e,t) \,\big\lvert\, df_t |_e \neq 0 \,\big\}$
of $E \times [0,1]$,
which by assumption contains
$\{ (e,t) \ | \ f_t (e) = a_t \}$ and $\{ (e,t) \ | \ f_t (e) = b_t \}$.
For every $t \in [0,1]$ we consider the vector field
$u_t := \nabla f_t / \| \nabla f_t \|^2$
on $\mathcal{O}_t := \{ e \in E \ | \ df_t|_e \neq 0\}$.
Let $\rho : E \times [0,1] \to \mathbb{R}$
be a smooth $\mathbb{Z}_k$-invariant cut-off function
that is supported in $\mathcal{O}$
and is equal to~$1$ on a neighborhood $\mathcal{O}'$
of $\{ (e,t) \ | \ f_t (e) = a_t \} \cup \{ (e,t) \ | \ f_t (e) = b_t \}$,
and consider the time-dependent vector field $\{X_t\}_{t \in [0,1]}$ on $E$ 
that is given by 
$$
X_t (e) = \rho(e,t) \, \big( \dot{a}_t - \dot{f}_t (e) \big) \, u_t (e)
$$
for $(e,t)$ in $\mathcal{O}$
and that vanishes for $(e,t)$ outside $\mathcal{O}$.
For $(e,t)$ in $\mathcal{O}$
we have
$$
\lVert X_t (e) \rVert
= \frac{\rho(e,t) \, \big| \dot{a}_t - \dot{f}_t (e) \big|}{\lVert \nabla f_t \rVert} \,.
$$
Since by assumption $\dot{f}_t$ and $\frac{1}{\lVert \nabla f_t \rVert}$
are bounded,
we deduce that $\lVert X_t \rVert$ is bounded
and so the flow $\{\theta_t\}$ of $X_t$
is defined for $t \in [0, 1]$.
Moreover we have
$$
\frac{d}{dt} \, f_t \big( \theta_t (e) \big) 
= \dot{f}_t \big( \theta_t (e) \big) + df_t (X_t) \big( \theta_t(e) \big)
= \dot{f}_t \big( \theta_t (e) \big)
+ \rho( \theta_t (e) , t ) \Big( \dot{a}_t - \dot{f}_t \big(\theta_t (e)\big) \Big) \,,
$$
thus in particular
$\frac{d}{dt} f_t \big( \theta_t (e) \big) = \dot{a}_t$
if  $\big( \theta_t (e) , t \big) \in \mathcal{O}'$
and so each $\theta_t$ sends $\{ f_0 = a_0 \}$ to $\{ f_t = a_t \}$
and $\{ f_0 = b_0 \}$ to $\{ f_t = b_t \}$.
Since $\theta_0$ is the identity,
by continuity the isotopy $\theta_t$
sends $\{ f_0 \leq a_0 \}$ to $\{ f_t \leq a_t \}$
and $\{ f_0 \leq b_0 \}$ to $\{ f_t \leq b_t \}$ for all $t$.
Moreover, by construction, it is $\mathbb{Z}_k$-equivariant.
\end{proof}

Since $(F_1^{(s)})_{\infty}$ does not depend on $s$,
$\frac{d}{ds} \, F_1^{(s)}$ is bounded
and so $\frac{d}{ds} \, \overline{(F_1^{(s)})^{\sharp k}}$ is bounded.
Moreover,
the norm of the gradient of each $\overline{(F_1^{(s)})^{\sharp k}}$
is bounded away from zero outside a compact set
since, by \autoref{proposition: extend to sphere},
the functions $\overline{(F_1^{(s)})^{\sharp k}}$
are quadratic at infinity.
We can thus apply \autoref{lemma: continuation}
to the 1-parameter family of functions $\overline{(F_1^{(s)})^{\sharp k}}$
and obtain a $\mathbb{Z}_k$-equivariant isotopy $\{\theta_s\}$
of $S^{2n} \times \mathbb{R}^{2n(k-1)} \times \mathbb{R}^{Nk}$
mapping the sublevel sets of $\overline{(F_1^{(0)})^{\sharp k}}$
at $a$ and $b$
to those of $\overline{(F_1^{(s)})^{\sharp k}}$.
In particular, this gives an isomorphism
\[
(\theta_1)_{\ast}:
G_{\mathbb{Z}_k, \ast}^{(a, b]} (F_t^{(0)})
\rightarrow G_{\mathbb{Z}_k, \ast}^{(a, b]} (F_t^{(1)}) \,,
\]
and so an isomorphism
\[\lambda_{F_t^{(s)}} :=
(i_{F_t^{(1)}})^{-1} \circ (\theta_1)_{\ast} \circ i_{F_t^{(0)}}:
G_{\mathbb{Z}_k, \ast}^{(a, b]} \big(\{\varphi_t\}\big)
\rightarrow
G_{\mathbb{Z}_k, \ast}^{(a, b]}
\big(\{\psi_1 \circ \varphi_t \circ \psi_1^{-1} \} \big) \,.
\]
This isomorphism does not depend on the choice of the cut-off function $\rho$
in the proof of \autoref{lemma: continuation},
and so it is uniquely determined
by the 1-parameter family of functions $\overline{(F_1^{(s)})^{\sharp k}}$,
hence (as suggested by the notation)
by the 2-parameter family of functions $F_t^{(s)}$.
By a slight abuse of language
we also say that $\{\theta_s\}$ is \emph{the} $\mathbb{Z}_k$-equivariant isotopy
on $S^{2n} \times \mathbb{R}^{2n(k-1)} \times \mathbb{R}^{Nk}$
defined by the 1-parameter family of functions $\overline{(F_1^{(s)})^{\sharp k}}$.

\begin{prop}
\label{proposition: invariance by conjugation symplectic bis}
Let $\{\varphi_t\}_{t \in [0, 1]}$ and $\{\psi_t\}_{t \in [0, 1]}$
be compactly supported Hamiltonian isotopies
of $(\mathbb{R}^{2n}, \omega_0)$,
and suppose that $a \leq b$ in $\mathbb{R} \cup \{\pm \infty\}$
are not in the action spectrum of $(\varphi_1)^k$.
Then the isomorphism
\[
\lambda_{\{\psi_t\}}:
G_{\mathbb{Z}_k, \ast}^{(a, b]} \big(\{\varphi_t\}\big)
\rightarrow 
G_{\mathbb{Z}_k, \ast}^{(a, b]}
\big( \{ \psi_1 \circ \varphi_t \circ \psi_1^{-1} \} \big)
\]
defined by $\lambda_{\{\psi_t\}} = \lambda_{F_t^{(s)}}$
for any 2-parameter family $F_t^{(s)}$
of special generating functions quadratic at infinity
for the 2-parameter family of Hamiltonian symplectomorphims
$\{\psi_s \circ \varphi_t \circ \psi_s^{-1} \}$
is well-defined,
i.e.\ it does not depend on the choice of $F_t^{(s)}$.
\end{prop}

\begin{proof}
For any non-degenerate quadratic form
$Q: \mathbb{R}^{N'} \rightarrow \mathbb{R}$
we have $\lambda_{F_t^{(s)} \oplus Q} = \lambda_{F_t^{(s)}}$.
Indeed,
if $\{\theta_s\}$ is the $\mathbb{Z}_k$-equivariant isotopy
on $S^{2n} \times \mathbb{R}^{2n(k-1)} \times \mathbb{R}^{Nk}$
defined by the 1-parameter family of functions $\overline{(F_1^{(s)})^{\sharp k}}$
then $\{ (\theta_s, \id) \}$ is the $\mathbb{Z}_k$-equivariant isotopy
on $S^{2n} \times \mathbb{R}^{2n(k-1)} \times \mathbb{R}^{Nk} \times \mathbb{R}^{N'k}$
defined by the 1-parameter family of functions 
$\overline{(F_1^{(s)} \oplus Q)^{\sharp k}} = \overline{(F_1^{(s)})^{\sharp k}} \oplus Q^{\oplus k}$,
and by naturality of the $\mathbb{Z}_k$-equivariant Thom isomorphism
the diagram
\[
\begin{tikzcd}
{G_{\mathbb{Z}_k, \ast}^{(a, b]} \big(\{\varphi_t\}\big)}
\arrow[r, "i_{F_t^{(0)}}"] \arrow[rd, "i_{F_t^{(0)} \oplus Q}"' near end] 
& {G_{\mathbb{Z}_k, \ast}^{(a, b]} (F_t^{(0)})}
\arrow[r, "(\theta_1)_{\ast}"]
& {G_{\mathbb{Z}_k, \ast}^{(a, b]} (F_t^{(1)})}
& {G_{\mathbb{Z}_k, \ast}^{(a, b]} \big( \{\psi_1 \circ \varphi_t \circ \psi_1^{-1}\} \big)}
\arrow[l, "i_{F_t^{(1)}}"']
\arrow[ld, "i_{F_t^{(1)} \oplus Q}" near end] \\
& {G_{\mathbb{Z}_k, \ast}^{(a, b]} (F_t^{(0)} \oplus Q)}
\arrow[u] 
\arrow[r, "(\theta_1 \times \id)_{\ast}"] 
& {G_{\mathbb{Z}_k, \ast}^{(a, b]} (F_t^{(1)} \oplus Q)}
\arrow[u]
&  
\end{tikzcd}
\]
commutes.
By \autoref{lemma: lemmas gf} (iii)
it thus remains to show that
if $F_t^{(s)}$ and $\underline{F}_t^{(s)}$
are 2-parameter families
of special generating functions quadratic at infinity
for $\{\psi_s \circ \varphi_t \circ \psi_s^{-1} \}$
with $\overline{\underline{F}_t^{(s)}} = \overline{F_t^{(s)}} \circ \overline{\Phi_{s, t}}$
for a 2-parameter family of fibre preserving diffeomorphisms $\overline{\Phi_{s, t}}$
then $\lambda_{F_t^{(s)}} = \lambda_{\underline{F}_t^{(s)}}$.
But this follows from the fact that
if $\{\theta_s\}$ and $\{\underline{\theta}_s\}$
are the $\mathbb{Z}_k$-equivariant isotopies
on $S^{2n} \times \mathbb{R}^{2n(k-1)} \times \mathbb{R}^{Nk}$
defined by the families $\overline{(F_1^{(s)})^{\sharp k}}$
and $\overline{(\underline{F}_1^{(s)})^{\sharp k}}$ respectively
then the maps $\underline{\theta}_1 \circ (\overline{\Phi_{0, 1})_k}^{\,-1}$
and $(\overline{\Phi_{1, 1})_k}^{\,-1} \circ \theta_1$
are homotopic through the $\mathbb{Z}_k$-equivariant homotopy
$\underline{\theta}_1 \circ (\underline{\theta}_s)^{-1}
\circ (\overline{\Phi_{s, 1})_k}^{\,-1} \circ \theta_s$,
and so the diagram
\[
\begin{tikzcd}
{G_{\mathbb{Z}_k, \ast}^{(a, b]} \big(\{\varphi_t\}\big)}
\arrow[r, "i_{F_t^{(0)}}"]
\arrow[rd, "i_{\underline{F}_t^{(0)}}"' near end] 
& {G_{\mathbb{Z}_k, \ast}^{(a, b]} (F_t^{(0)})}
\arrow[r, "(\theta_1)_{\ast}"]                     
& {G_{\mathbb{Z}_k, \ast}^{(a, b]} (F_t^{(1)})}      
& {G_{\mathbb{Z}_k, \ast}^{(a, b]} \big( \{\psi_1 \circ \varphi_t \circ \psi_1^{-1}\} \big)}
\arrow[l, "i_{F_t^{(1)}}"'] \arrow[ld, "i_{\underline{F}_t^{(1)}}" near end] \\
& {G_{\mathbb{Z}_k, \ast}^{(a, b]} (\underline{F}_t^{(0)})} \arrow[u] 
\arrow[r, "(\underline{\theta}_1)_{\ast}"] 
& {G_{\mathbb{Z}_k, \ast}^{(a, b]} (\underline{F}_t^{(1)})} \arrow[u] &         
\end{tikzcd}\] 
commutes.
\end{proof}

We now define the $\mathbb{Z}_k$-equivariant
generating function homology
$G_{\mathbb{Z}_k, \ast}^{(a, b]} (\mathcal{U})$
of a domain $\mathcal{U}$
of $(\mathbb{R}^{2n}, \omega_0)$.
Let $\mathcal{H}^k_{a, b} (\mathcal{U})$
be the set of compactly supported
Hamiltonian isotopies $\{\varphi_t\}_{t \in [0, 1]}$
supported in $\mathcal{U}$
such that $a$ and $b$ do not belong
to the action spectrum of $(\varphi_1)^k$.
Define a partial order $\leq$
on $\mathcal{H}^k_{a, b} (\mathcal{U})$
by posing $\{\varphi_t^{(1)}\} \leq \{\varphi_t^{(2)}\}$
if $\{\varphi_t^{(2)} \circ (\varphi_t^{(1)})^{-1}\}$
is a non-negative compactly supported Hamiltonian isotopy.
For $\{\varphi_t^{(i)}\}$ and $\{\varphi_t^{(j)}\}$
in $\mathcal{H}^k_{a, b} (\mathcal{U})$
with $\{\varphi_t^{(i)}\} \leq \{\varphi_t^{(j)}\}$
we define an induced homomorphism
\[
\mu_i^j: 
G_{\mathbb{Z}_k, \ast}^{(a, b]} \big(\{\varphi_t^{(j)}\}\big)
\rightarrow 
G_{\mathbb{Z}_k, \ast}^{(a, b]} \big(\{\varphi_t^{(i)}\}\big)
\]
as follows.
Consider the non-negative Hamiltonian isotopy
$\{\varphi_t\}$ defined by
$\varphi_t = \varphi_t^{(j)} \circ (\varphi_t^{(i)})^{-1}$.
By \autoref{lemma: monotonicity special},
there is a 2-parameter family $F_t^{(s)}$
of special generating functions quadratic at infinity
for the 2-parameter family
$\{\varphi_{st} \circ \varphi_t^{(i)}\}_{s \in [0,1]}$
of Hamiltonian symplectomorphisms
such that $F_0^{(s)}$ is a non-degenerate quadratic form for every $s$
and $\frac{d}{ds} F_t^{(s)} \geq 0$;
in particular, $F_t^{(0)}$ and $F_t^{(1)}$
are 1-parameter families
of special generating functions quadratic at infinity
for $\{\varphi_t^{(i)}\}$ and $\{\varphi_t^{(j)}\}$ respectively
with $F_t^{(0)} \leq F_t^{(1)}$,
and so $\overline{(F_t^{(0)})^{\sharp k}} \leq \overline{(F_t^{(1)})^{\sharp k}}$.
The inclusion of the sublevel sets at $a$ and $b$
of $\overline{(F_1^{(1)})^{\sharp k}}$ into those of $\overline{(F_1^{(0)})^{\sharp k}}$
induces a homomorphism
\[
(i_{F_t^{(1)}, F_t^{(0)}})_{\ast}:
G_{\mathbb{Z}_k, \ast}^{(a, b]} (F_t^{(1)})
\rightarrow 
G_{\mathbb{Z}_k, \ast}^{(a, b]} (F_t^{(0)}) \,,
\]
and so a homomorphism
\begin{equation}\label{equation: monotonicity homomorphism symplectic}
\mu_i^j :=
(i_{F_t^{(0)}})^{-1} \circ (i_{F_t^{(1)}, F_t^{(0)}})_{\ast}
\circ i_{F_t^{(1)}}:
G_{\mathbb{Z}_k, \ast}^{(a, b]} \big( \{\varphi_t^{(j)} \}\big)
\rightarrow 
G_{\mathbb{Z}_k, \ast}^{(a, b]} \big(\{\varphi_t^{(i)} \}\big) \,,
\end{equation}
which can be seen to be well-defined
(i.e.\ independent of the choice of $F_t^{(s)}$)
by an argument similar to the one
in the proof of
\autoref{proposition: invariance by conjugation symplectic bis}.
The monotonicity homomorphisms $\mu_i^j$ satisfy the cocycle conditions
\[
\begin{cases}
\mu_i^{i} = \id \\
\mu_i^{j} \circ \mu_j^l = \mu_i^l
\quad \text{ for }
\{\varphi_t^{(i)}\} \leq \{ \varphi_t^{(j)} \}
\leq \{ \varphi_t^{(l)} \} \,.
\end{cases}
\]
Thus $\big\{ G_{\mathbb{Z}_k,*}^{(a,b]} \big( \{\varphi_t\} \big)
\big\}_{ \{\varphi_t\} \in \mathcal{H}^k_{a, b} (\mathcal{U}) }$
is an inversely directed family
of graded modules over $\mathbb{Z}_k$,
and we define
\[
G_{\mathbb{Z}_k,*}^{(a,b]} (\mathcal{U}) =
\underset{}{\varprojlim} \;
\Big\{ G_{\mathbb{Z}_k,*}^{(a,b]} \big( \{\varphi_t\} \big)
\Big\}_{ \{\varphi_t\} \in \mathcal{H}^k_{a, b} (\mathcal{U}) } \,.
\]

We now show that these groups are symplectic invariants.
Notice first that for any domain $\mathcal{U}$
and any compactly supported Hamiltonian isotopy $\{\psi_t\}_{t \in [0, 1]}$,
if $\{\varphi_t\} \in \mathcal{H}_{a, b}^k (\mathcal{U})$
then $\{\psi_1 \circ \varphi_t \circ \psi_1^{-1}\}
\in \mathcal{H}_{a, b}^k \big(\psi_1(\mathcal{U})\big)$,
and recall from \autoref{proposition: invariance by conjugation symplectic bis}
that we have an isomorphism
\[
\lambda_{\{\psi_t\}}:
G_{\mathbb{Z}_k, \ast}^{(a, b]} \big(\{\varphi_t\}\big)
\rightarrow 
G_{\mathbb{Z}_k, \ast}^{(a, b]}
\big( \{ \psi_1 \circ \varphi_t \circ \psi_1^{-1} \} \big) \,.
\]

\begin{prop}
For every domain $\mathcal{U}$ of $(\mathbb{R}^{2n}, \omega_0)$
and every compactly supported Hamiltonian isotopy
$\{\psi_t\}_{t \in [0, 1]}$,
the isomorphisms
$\lambda_{\{\psi_t\}}:
G_{\mathbb{Z}_k, \ast}^{(a, b]} \big(\{\varphi_t\}\big)
\rightarrow 
G_{\mathbb{Z}_k, \ast}^{(a, b]}
\big( \{ \psi_1 \circ \varphi_t \circ \psi_1^{-1} \} \big)$
for $\{\varphi_t\} \in \mathcal{H}_{a, b}^k (\mathcal{U})$
induce a well-defined isomorphism
\[
\lambda_{\{\psi_t\}}:
G_{\mathbb{Z}_k,*}^{(a,b]} (\mathcal{U}) \rightarrow
G_{\mathbb{Z}_k,*}^{(a,b]} \big(\psi_1(\mathcal{U})\big) \,.
\]
\end{prop}

\begin{proof}
If $\{\varphi_t^{(0)}\}$ and $\{\varphi_t^{(1)}\}$
are elements of $\mathcal{H}_{a, b}^k (\mathcal{U})$
with $\{\varphi_t^{(0)}\} \leq \{ \varphi_t^{(1)} \}$
then $\{\psi_1 \circ \varphi_t^{(0)} \circ \psi_1^{-1}\}$
and $\{\psi_1 \circ \varphi_t^{(1)} \circ \psi_1^{-1}\}$
are elements of $\mathcal{H}_{a, b}^k \big(\psi_1(\mathcal{U})\big)$
with$\{ \psi_1 \circ \varphi_t^{(0)} \circ \psi_1^{-1} \}
\leq \{ \psi_1 \circ \varphi_t^{(1)} \circ \psi_1^{-1} \}$.
We show that the isomorphisms $\lambda_{\{\psi_t\}}$
commute with the monotonicity homomorphisms,
and so induce an isomorphism between the limits.
Consider the 2-parameter family
$\{ \psi_s \circ \varphi_t^{(0)} \circ \psi_s^{-1} \}$,
and the non-negative Hamiltonian isotopy
$\varphi_t = \varphi_t^{(1)} \circ (\varphi_t^{(0)})^{-1}$.
By \autoref{lemma: monotonicity special},
there is a 3-parameter family $F_{s, t}^{(u)}$
of special generating functions quadratic at infinity
for the 3-parameter family
$\{\psi_s \circ \varphi_{ut} \circ \varphi_t^{(0)} \circ \psi_s^{-1}\}$
such that $F_{s, 0}^{(u)}$ is a non-degenerate quadratic form for all $s$ and $u$,
$(F_{s, 0}^{(u)})_{\infty}$ does not depend on $s, t$ and $u$,
and $\frac{d}{du} \, F_{s, t}^{(u)} \geq 0$.
In particular,
$F_{0, t}^{(0)}$, $F_{1, t}^{(0)}$, $F_{0, t}^{(1)}$
and $F_{1, t}^{(1)}$
are 1-parameter families of special generating functions quadratic at infinity
respectively for $\{\varphi_t^{(0)}\}$,
$\{ \psi_1 \circ \varphi_t^{(0)} \circ \psi_1^{-1} \}$,
$\{\varphi_t^{(1)}\}$
and $\{ \psi_1 \circ \varphi_t^{(1)} \circ \psi_1^{-1} \}$
satisfying $F_{0, t}^{(0)} \leq F_{0, t}^{(1)}$
and $F_{1, t}^{(0)} \leq F_{1, t}^{(1)}$.
We then have a commutative diagram
\[
\begin{tikzcd}
{G_{\mathbb{Z}_k, \ast}^{(a, b]} (F_{0, t}^{(1)})}
\arrow[d, "(\theta_1^{(1)})_{\ast}"]
\arrow[r, "{(i_{F_{0, t}^{(1)}, F_{0, t}^{(0)}})_{\ast}}"] 
&[4em] {G_{\mathbb{Z}_k, \ast}^{(a, b]} (F_{0, t}^{(0)})}
\arrow[d, "(\theta_1^{(0)})_{\ast}"] \\
[1em] {G_{\mathbb{Z}_k, \ast}^{(a, b]} (F_{1, t}^{(1)})}
\arrow[r, "{(i_{F_{1, t}^{(1)}, F_{1, t}^{(0)}})_{\ast}}"]                             
&[4em] {G_{\mathbb{Z}_k, \ast}^{(a, b]} (F_{0, t}^{(0)}) \,,}                                 
\end{tikzcd}
\]
where the vertical arrows
are the isomorphisms,
defined as in the discussion before
\autoref{proposition: invariance by conjugation symplectic bis},
that are used to define the isomorphisms
$\lambda_{\{\psi_t\}}:
G_{\mathbb{Z}_k, \ast}^{(a, b]} \big(\{\varphi_t^{(1)}\}\big)
\rightarrow 
G_{\mathbb{Z}_k, \ast}^{(a, b]}
\big( \{ \psi_1 \circ \varphi_t^{(1)} \circ \psi_1^{-1} \} \big)$
and
$\lambda_{\{\psi_t\}}:
G_{\mathbb{Z}_k, \ast}^{(a, b]} \big(\{\varphi_t^{(0)}\}\big)
\rightarrow 
G_{\mathbb{Z}_k, \ast}^{(a, b]}
\big( \{ \psi_1 \circ \varphi_t^{(0)} \circ \psi_1^{-1} \} \big)$
respectively,
and the horizontal arrows are the homomorphisms
that are used to define
the monotonicity homomorphisms
\eqref{equation: monotonicity homomorphism symplectic}.
This shows that the isomorphisms $\lambda_{\{\psi_t\}}$
commute with the monotonicity homomorphisms,
as we wanted.
\end{proof}

If $\mathcal{U}_1 \subset \mathcal{U}_2$,
the inclusion of posets
$\mathcal{H}^k_{a, b} (\mathcal{U}_1)
\subset \mathcal{H}^k_{a, b} (\mathcal{U}_2)$
induces a homomorphism
\[
G_{\mathbb{Z}_k,*}^{(a,b]} (\mathcal{U}_2) \rightarrow G_{\mathbb{Z}_k,*}^{(a,b]} (\mathcal{U}_1) \,.
\]
Moreover, for
$\mathcal{U}_1 \subset \mathcal{U}_2 \subset \mathcal{U}_3$
and for any compactly supported Hamiltonian isotopy
$\{\psi_t\}_{t \in [0,1]}$
these homomorphisms fit into commutative diagrams
\[
\begin{tikzcd}
G_{\mathbb{Z}_k, \ast}^{(a, b]} (\mathcal{U}_3) \arrow{r} \arrow {rd} & G_{\mathbb{Z}_k, \ast}^{(a, b]} (\mathcal{U}_2) \arrow{d} \\
& G_{\mathbb{Z}_k, \ast}^{(a, b]} (\mathcal{U}_1)
\end{tikzcd}
\]
and
\[
\begin{tikzcd}
G_{\mathbb{Z}_k, \ast}^{(a, b]} (\mathcal{U}_2)
\arrow{r} \arrow{d}{\lambda_{\{\psi_t\}}}
& G_{\mathbb{Z}_k, \ast}^{(a, b]} (\mathcal{U}_1)
\arrow{d}{\lambda_{\{\psi_t\}}} \\
G_{\mathbb{Z}_k, \ast}^{(a, b]} \big(\psi_1(\mathcal{U}_2)\big)
\arrow{r}
& G_{\mathbb{Z}_k, \ast}^{(a, b]} \big(\psi_1(\mathcal{U}_1)\big) \,. 
\end{tikzcd}
\]


\section{\texorpdfstring{$\mathbb{Z}_k$}{}-invariant functions detecting translated \texorpdfstring{$k$}{}-chains of contactomorphisms}
\label{section: invariant generating functions contact}

We have seen that in the symplectic case
there is a composition formula that,
given a generating function
of a Hamiltonian symplectomorphism $\varphi$,
allows to obtain a generating function of $\varphi^k$
that is invariant by a natural $\mathbb{Z}_k$-action.
Such a composition formula does not seem to exist in the contact case.
Indeed,
if we had a generating function
of the $k$-th iteration $\phi^k$
of a contactomorphism $\phi$ of $(\mathbb{R}^{2n+1}, \xi_0)$
that is invariant by a $\mathbb{Z}_k$-action
then its set of critical points
would also be invariant by the $\mathbb{Z}_k$-action,
and would be in 1--1 correspondence
with the set of translated points of $\phi^k$.
But, there is no natural $\mathbb{Z}_k$-action
on the set of translated points of $\phi^k$.
Indeed, for consistency with the symplectic case
(for any compactly supported Hamiltonian symplectomorphism
$\varphi$ of $(\mathbb{R}^{2n}, \omega_0)$
we would like the $\mathbb{Z}_k$-action
on the set of translated points of $\widetilde{\varphi}$
to project to the $\mathbb{Z}_k$-action
on the set of fixed points of $\varphi$)
such an action would send a translated point $p$ of $\phi^k$
to a point in the same Reeb orbit as $\phi(p)$;
the natural choice would be to send $p$ to $\phi(p)$,
but in general $\phi(p)$ is not a translated point of $\phi^k$
(since $\phi$ does not necessarily preserve the contact form,
the fact that $p$ and $\phi^k(p)$ are in the same Reeb orbit
does not imply that their images by $\phi$
should also be in the same Reeb orbit).
On the other hand,
we have seen in the introduction
that there is a natural $\mathbb{Z}_k$-action
on the set of translated $k$-chains of $\phi$.
We thus construct in this section
$\mathbb{Z}_k$-invariant functions
that detect translated $k$-chains
of contactomorphisms of $(\mathbb{R}^{2n+1}, \xi_0)$.
These functions will be used
in \autoref{section: equivariant contact homology}
to define the $\mathbb{Z}_k$-equivariant homology
of domains of $(\mathbb{R}^{2n} \times S^1, \xi_0)$.

Recall that a translated $k$-chain of action $tk$
of a contactomorphism $\phi$ of $(\mathbb{R}^{2n + 1}, \xi_0)$
with respect to the contact form $\alpha_0$
is a $k$-tuple of points $(p_1, \dots, p_k)$
such that
\[
g(p_1) + \dots + g (p_k) = 0 \,,
\]
where $g$ denotes the conformal factor of $\phi$,
and
\[
p_{j+1} = \varphi_{-t}^{\alpha_0} \circ \phi \, (p_j)
\]
for all $j$,
with the convention $p_{k+1} = p_1$,
where $\{\varphi_t^{\alpha_0}\}$
denotes the Reeb flow.
Recall also that a discriminant point is a translated point
of action zero.

\begin{lemma}\label{lemma: translated chains vs discriminant points}
The translated $k$-chains of action $tk$
of a contactomorphism $\phi$ of $(\mathbb{R}^{2n + 1}, \xi_0)$
are in 1--1 correspondence with the discriminant points
of $(\varphi_{-t}^{\alpha_0} \circ \phi)^k$.
\end{lemma}

\begin{proof}
If $(p_1, \dots, p_k)$ is a translated $k$-chain
of action $tk$ of $\phi$
then $p_{j} = (\varphi_{-t}^{\alpha_0} \circ \phi)^{j-1} (p_1)$
for $j = 2, \dots, k$
and $p_1 = (\varphi_{-t}^{\alpha_0} \circ \phi)^k (p_1)$.
Conversely,
if $p_1 = (\varphi_{-t}^{\alpha_0} \circ \phi)^k (p_1)$
and we pose
$p_{j} = (\varphi_{-t}^{\alpha_0} \circ \phi)^{j-1} (p_1)$
for $j = 2, \dots, k$
then $(p_1, \dots, p_k)$ satisfies 
$p_{j+1} = \varphi_{-t}^{\alpha_0} \circ \phi (p_j)$
for all $j$.
Moreover,
the conformal factor
of $(\varphi_{-t}^{\alpha_0} \circ \phi)^k$
is $\sum_{j=0}^{k-1} g \circ (\varphi_{-t}^{\alpha_0} \circ \phi)^j$,
where $g$ denotes the conformal factor of $\phi$,
and thus it vanishes at $p_1$ if and only if $g(p_1) + \dots + g (p_k) = 0$.
We conclude that the map $(p_1, \dots, p_k) \mapsto p_1$
is a 1--1 correspondence between the translated $k$-chains of action $tk$ of $\phi$
and the discriminant points of $(\varphi_{-t}^{\alpha_0} \circ \phi)^k$.
\end{proof}

In order to construct functions that detect translated $k$-chains
our first step is thus to construct functions
that detect discriminant points
of $k$-th iterations of contactomorphisms.

Let $\phi$ be a contactomorphism of $(\mathbb{R}^{2n+1}, \xi_0)$
with generating function
$F: \mathbb{R}^{2n+1} \times \mathbb{R}^N
\rightarrow \mathbb{R}$.
Consider the function
\[
F^{\sharp k}: \mathbb{R}^{(2n+2)k} \times \mathbb{R}^{Nk}
\rightarrow \mathbb{R}
\]
defined by
\begin{gather*}
  F^{\sharp k}
(x_1, y_1, \theta_1, r_1, \dots, x_k, y_k, \theta_k, r_k,
\zeta_1, \dots, \zeta_k)\\
= \sum_{j = 1}^k e^{r_j}
F \Big( e^{- \frac{r_j}{2}} \, \frac{x_j + x_{j+1}}{2} \,,\,
e^{- \frac{r_j}{2}} \, \frac{y_j + y_{j+1}}{2} \,,\,
\theta_{j+1} \,,\, \zeta_j \Big)\\
+ \frac{1}{2} \, (x_j y_{j+1} - x_{j+1} y_j)
+ e^{r_{j-1}} (\theta_{j} - \theta_{j+1}) \,,
\end{gather*}
with the usual cyclic convention on the indices.
For any $a \in \mathbb{R}$
we have
\begin{gather*}
F^{\sharp k}
(e^{\frac{a}{2}} x_1, e^{\frac{a}{2}} y_1, \theta_1, r_1 + a,
\dots,
e^{\frac{a}{2}} x_k, e^{\frac{a}{2}} y_k, \theta_k, r_k + a,
\zeta_1, \dots, \zeta_k)\\
= e^{a} \, F^{\sharp k}
(x_1, y_1, \theta_1, r_1, \dots, x_k, y_k, \theta_k, r_k, \zeta_1, \dots, \zeta_k) \,.  
\end{gather*}
The set of critical points of $F^{\sharp k}$
is thus invariant by the $\mathbb{R}$-action
\begin{gather}\label{equation: action R}
\begin{gathered}
(x_1, y_1, \theta_1, r_1, \dots, x_k, y_k, \theta_k, r_k, \zeta_1, \dots, \zeta_k) \\
\mapsto
(e^{\frac{a}{2}} x_1, e^{\frac{a}{2}} y_1, \theta_1, r_1 + a,
\dots,
e^{\frac{a}{2}} x_k, e^{\frac{a}{2}} y_k, \theta_k, r_k + a,
\zeta_1, \dots, \zeta_k) \,,
\end{gathered}
\end{gather}
and all the critical points of $F^{\sharp k}$
have critical value equal to zero.

\begin{prop}\label{proposition: composition formula contact}
The $\mathbb{R}$-orbits of critical points of $F^{\sharp k}$
are in 1--1 correspondence
with the discriminant points of $\phi^k$.
\end{prop}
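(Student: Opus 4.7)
The proof follows the pattern of \autoref{proposition: composition formula}, adapted to the contact setting where the conformal factor $g$ enters through the symplectization-like variables $r_j$. The strategy is to compute the partial derivatives of $F^{\sharp k}$ with respect to each class of variables, then interpret the critical-point equations via the diffeomorphism $\Psi_F$ of \eqref{equation: diffeomorphism gf contact} together with the explicit formula for $\Gamma_\phi$ from \autoref{section: generating functions}.

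Writing $X_j = e^{-r_j/2}(x_j+x_{j+1})/2$ and $Y_j = e^{-r_j/2}(y_j+y_{j+1})/2$, the vanishing of $\partial_{\zeta_j} F^{\sharp k}$ places each $(X_j, Y_j, \theta_{j+1}, \zeta_j)$ in $\Sigma_F$, so via $\Psi_F^{-1}$ it corresponds to a point $p_j = (\widetilde{x}_j,\widetilde{y}_j,\theta_{j+1}) \in \mathbb{R}^{2n+1}$. The formula for $\Gamma_\phi$ then yields, at such a fibre critical point, $\partial_\theta F = e^{g(p_j)}-1$, together with $\partial_X F = \phi_y(p_j) - e^{g(p_j)/2}\widetilde{y}_j$, $\partial_Y F = e^{g(p_j)/2}\widetilde{x}_j - \phi_x(p_j)$, and an analogous expression for $F$ itself in terms of $\phi(p_j)$ and $g(p_j)$. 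The equation $\partial_{\theta_j}F^{\sharp k}=0$ simplifies, using $1+\partial_\theta F = e^{g(p_{j-1})}$, to $r_{j-1}+g(p_{j-1}) = r_{j-2}$; summing cyclically yields $\sum_j g(p_j) = 0$, which is half of the discriminant condition for $\phi^k$.

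It remains to extract the orbit relation $\phi(p_j)=p_{j+1}$ from the equations $\partial_{x_j}=\partial_{y_j}=\partial_{r_j}=0$. Substituting the explicit expressions for $\partial_X F$, $\partial_Y F$ and the homogeneity combination $F-(X_j\partial_X F+Y_j\partial_Y F)/2$ appearing in $\partial_{r_j}$, and using the conformal relation from the previous step to eliminate the $r_j$'s, these cyclically coupled equations in $(x_j, y_j)$ and in $(\theta_j)$ should pin down $\phi(p_j)=p_{j+1}$ for all $j$ (indices mod $k$). This makes $p_1$ a fixed point of $\phi^k$, and combined with $\sum_j g(p_j)=0$ this says exactly that $p_1$ is a discriminant point of $\phi^k$. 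Conversely, given a discriminant point with $\phi$-orbit $(p_0,\dots,p_{k-1})$, the relation $r_{j-1}=r_j+g(p_j)$ determines $r_1,\dots,r_k$ up to a single additive constant; this freedom realizes precisely the scaling symmetry $(x_j,y_j,r_j)\mapsto(e^{a/2}x_j,e^{a/2}y_j,r_j+a)$ of $F^{\sharp k}$, accounting for the $\mathbb{R}$-family parametrization.

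The main obstacle will be the bookkeeping in the last step: the equations $\partial_{x_j},\partial_{y_j},\partial_{r_j}=0$ couple all $(x_j, y_j, \theta_j, r_j)$ cyclically, and extracting the full orbit condition $\phi(p_j)=p_{j+1}$ — in particular the $\theta$-matching $\phi_\theta(p_j)=\theta_{j+2}$ — from the homogeneity term in $\partial_{r_j}$ requires careful algebraic manipulation with the explicit form of $F$ at fibre critical points. Verifying that these equations close cyclically, rather than producing a drift or offset in the $\theta$-direction, is the delicate part of the computation.
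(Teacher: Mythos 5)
Your proposal follows essentially the same route as the paper's proof: identify fibre critical points of $F$ via $\partial_{\zeta_j}F^{\sharp k}=0$, use the explicit formula for $\Gamma_\phi$ to read off $d_1F$, $d_2F$, $d_3F$ and $F$ at those points, derive the telescoping relation $r_{j}+g(p_j)=r_{j-1}$ (hence $\sum_j g(p_j)=0$) from the $\theta$-derivatives, extract the orbit conditions $\phi(p_{j-1})=p_j$ from the $x_j$-, $y_j$- and $r_j$-derivatives (with the $\theta$-component indeed coming from the homogeneity combination in $\partial_{r_j}$), and account for the $\mathbb{R}$-families via the scaling symmetry. The computational verification you defer is exactly what the paper's proof carries out, and it closes as you predict.
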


\begin{proof}
Let $(x_1, y_1, \theta_1, r_1, \dots, x_k, y_k, \theta_k, r_k,
\zeta_1, \dots, \zeta_k)$
be a critical point of $F^{\sharp k}$.
For every $j$ we have
\begin{align*}
0 &= \frac{\partial F^{\sharp k}}{\partial \zeta_j}
(x_1, y_1, \theta_1, r_1, \dots, x_k, y_k, \theta_k, r_k,
\zeta_1, \dots, \zeta_k) \\
&= e^{r_j} \frac{\partial F}{\partial \zeta_j}
\Big( e^{- \frac{r_j}{2}} \, \frac{x_j + x_{j+1}}{2} \,\,
e^{- \frac{r_j}{2}} \, \frac{y_j + y_{j+1}}{2} \,,\,
\theta_{j+1} \,,\, \zeta_j \Big) \,,
\end{align*}
thus $\big( e^{- \frac{r_j}{2}} \, \frac{x_j + x_{j+1}}{2} \,\,
e^{- \frac{r_j}{2}} \, \frac{y_j + y_{j+1}}{2} \,,\,
\theta_{j+1} \,,\, \zeta_j \big)$
is a fibre critical point of $F$.
Define
\begin{equation}\label{equation: change of coordinates contact}
\begin{cases}
X_j = e^{- \frac{r_j}{2}} \, \frac{x_j + x_{j+1}}{2} \\
Y_j = e^{- \frac{r_j}{2}} \, \frac{y_j + y_{j+1}}{2} \\
\Theta_j = \theta_{j+1} \,,
\end{cases}
\end{equation}
and let $(\overline{X_j}, \overline{Y}_j, \overline{\Theta}_j)
\in \mathbb{R}^{2n+1}$
be the image of $(X_j, Y_j, \Theta_j, \zeta_j)$
by the inverse of the diffeomorphism
\eqref{equation: diffeomorphism gf contact}.
Then
\begin{gather*}
\begin{cases}
X_j =  \frac{e^{\frac{1}{2} g (\overline{X}_j, \overline{Y}_j, \overline{\Theta}_j)} \overline{X}_j
+ \phi_x (\overline{X}_j, \overline{Y}_j, \overline{\Theta}_j)}{2} \\
Y_j = \frac{e^{\frac{1}{2} g (\overline{X}_j, \overline{Y}_j, \overline{\Theta}_j)} \overline{Y}_j
+ \phi_y (\overline{X}_j, \overline{Y}_j, \overline{\Theta}_j)}{2} \\
\Theta_j = \overline{\Theta}_j \,,
\end{cases}\\
\begin{cases}
d_1F (X_j, Y_j, \Theta_j, \zeta_j)
= e^{\frac{1}{2} g (\overline{X}_j, \overline{Y}_j, \overline{\Theta}_j)} \overline{Y}_j
- \phi_y (\overline{X}_j, \overline{Y}_j, \overline{\Theta}_j)\\
d_2F (X_j, Y_j, \Theta_j, \zeta_j)
= \phi_x (\overline{X}_j, \overline{Y}_j, \overline{\Theta}_j)
- e^{\frac{1}{2} g (\overline{X}_j, \overline{Y}_j, \overline{\Theta}_j)} \overline{X}_j \\
d_3F (X_j, Y_j, \Theta_j, \zeta_j)
= e^{g (\overline{X}_j, \overline{Y}_j, \overline{\Theta}_j)} - 1 
\end{cases}
\end{gather*}
and
\[
F (X_j, Y_j, \Theta_j, \zeta_j)
\]
\[
= \phi_{\theta} (\overline{X}_j, \overline{Y}_j, \overline{\Theta}_j)
- \overline{\Theta}_j
+ \frac{ e^{\frac{1}{2} g (\overline{X}_j, \overline{Y}_j, \overline{\Theta}_j)}
\big( \overline{Y}_j \phi_x (\overline{X_j}, \overline{Y}_j, \overline{\Theta}_j) 
- \overline{X}_j \phi_y (\overline{X}_j, \overline{Y}_j, \overline{\Theta}_j) \big) }{2} \,,
\]
where we denote $\phi (\overline{X}_j, \overline{Y}_j, \overline{\Theta}_j)
= \big( \phi_x (\overline{X}_j, \overline{Y}_j, \overline{\Theta}_j),
\phi_y (\overline{X}_j, \overline{Y}_j, \overline{\Theta}_j),
\phi_{\theta} (\overline{X}_j, \overline{Y}_j, \overline{\Theta}_j)\big)$.
For every $j$ we have
\begin{align*}
0 &= \frac{\partial F^{\sharp k}}{\partial \theta_{j+1}}
(x_1, y_1, \theta_1, r_1, \dots, x_k, y_k, \theta_k, r_k,
\zeta_1, \dots, \zeta_k) \\
&= e^{r_j} d_3F (X_j, Y_j, \Theta_j, \zeta_j)
+ e^{r_j} - e^{r_{j-1}} \\
&= e^{r_j + g (\overline{X}_j, \overline{Y}_j, \overline{\Theta}_j)} - e^{r_{j-1}} \,,
\end{align*}
thus
\begin{equation}\label{equation: condition derivatives Theta}
r_j + g (\overline{X}_j, \overline{Y}_j, \overline{\Theta}_j)
= r_{j-1} \,.
\end{equation}
In particular,
\begin{equation}\label{equation: g}
\sum_{j=1}^k g (\overline{X}_j, \overline{Y}_j, \overline{\Theta}_j) = 0 \,.
\end{equation}
Moreover,
\begin{align*}
0 = & \; \frac{\partial F^{\sharp k}}{\partial x_j}
(x_1, y_1, \theta_1, r_1, \dots, x_k, y_k, \theta_k, r_k,
\zeta_1, \dots, \zeta_k) \\
= & \; \frac{1}{2} \, e^{\frac{r_j}{2}} \,
d_1F (X_j, Y_j, \Theta_j, \zeta_j) + \frac{1}{2} \, e^{\frac{r_{j-1}}{2}} \, 
d_1F (X_{j-1}, Y_{j-1}, \Theta_{j-1}, \zeta_{j-1})
+ \frac{1}{2} (y_{j+1} - y_{j-1}) \\
= & \; \frac{1}{2} \, e^{\frac{r_j}{2}} \,
d_1F (X_j, Y_j, \Theta_j, \zeta_j)
+ \frac{1}{2} \, e^{\frac{r_{j-1}}{2}} \,
d_1F (X_{j-1}, Y_{j-1}, \Theta_{j-1}, \zeta_{j-1})
+ e^{\frac{r_j}{2}} Y_j - e^{\frac{r_{j-1}}{2}} Y_{j-1} \\
= & \; e^{\frac{r_j}{2} + \frac{1}{2} g (\overline{X}_j, \overline{Y}_j, \overline{\Theta}_j)}
\, \overline{Y}_j - e^{\frac{r_{j-1}}{2}} \, \phi_y (\overline{X}_{j-1}, \overline{Y}_{j-1}, \overline{\Theta}_{j-1})\,,
\end{align*}
thus, using \eqref{equation: condition derivatives Theta},
\[
 \overline{Y}_j = \phi_y (\overline{X}_{j-1}, \overline{Y}_{j-1}, \overline{\Theta}_{j-1}) \,.
\]
Similarly,
the vanishing of $\frac{\partial F^{\sharp k}}{\partial y_j}$
at $(x_1, y_1, \theta_1, r_1, \dots, x_k, y_k, \theta_k, r_k,
\zeta_1, \dots, \zeta_k)$
gives
\[
\overline{X}_j = \phi_x (\overline{X}_{j-1}, \overline{Y}_{j-1}, \overline{\Theta}_{j-1}) \,.
\]
Finally we have
\begin{gather*}
    0 = \frac{\partial F^{\sharp k}}{\partial r_{j-1}}
(x_1, y_1, \theta_1, r_1, \dots, x_k, y_k, \theta_k, r_k,
\zeta_1, \dots, \zeta_k)\\
= e^{r_{j-1}} F (X_{j-1}, Y_{j-1}, \Theta_{j-1}, \zeta_{j-1})
- \frac{1}{2} e^{r_{j-1}} X_{j-1}
\, d_1F (X_{j-1}, Y_{j-1}, \Theta_{j-1}, \zeta_{j-1})\\
- \frac{1}{2} \, e^{r_{j-1}} \, Y_{j-1} \,
d_2F (X_{j-1}, Y_{j-1}, \Theta_{j-1}, \zeta_{j-1})
+ e^{r_{j-1}} (\Theta_{j-1} - \Theta_j)\\
= e^{r_{j-1}}
\big( \phi_{\theta} (\overline{X}_{j-1}, \overline{Y}_{j-1}, \overline{\Theta}_{j-1}) - \overline{\Theta}_j \big) \,,
\end{gather*}
thus
\[
\overline{\Theta}_j
= \phi_{\theta} (\overline{X}_{j-1}, \overline{Y}_{j-1}, \overline{\Theta}_{j-1}) \,.
\]
We conclude that
\[
(\overline{X}_j, \overline{Y}_j, \overline{\Theta}_j)
= \phi (\overline{X}_{j-1}, \overline{Y}_{j-1}, \overline{\Theta}_{j-1})
\]
for all $j$,
and so, since \eqref{equation: g} also holds,
$(\overline{X}_1, \overline{Y}_1, \overline{\Theta}_1)$
is a discriminant point of $\phi^k$.
The map
\[
(x_1, y_1, \theta_1, r_1, \dots, x_k, y_k, \theta_k, r_k,
\zeta_1, \dots, \zeta_k)
\mapsto (\overline{X}_1, \overline{Y}_1, \overline{\Theta}_1)
\]
gives thus a 1--1 correspondence
between the $\mathbb{R}$-orbits
of critical points of $F^{\sharp k}$
and the discriminant points of $\phi^k$.
\end{proof}

\begin{rmk}
The discriminant points of $\phi^k$
are in 1--1 correspondence with the $\mathbb{R}$-orbits
of fixed points of the lift of $\phi^k$ to the symplectization.
The function $F^{\sharp k}$ can be obtained
by applying a version of the composition formula
of \autoref{proposition: composition formula}
to a generating function of the lift of $\phi$
to the symplectization,
after appropriate identifications
of the symplectization of $(\mathbb{R}^{2n+1}, \xi_0)$
with $(\mathbb{R}^{2n+2}, \omega_0)$
and of the symplectic product of $(\mathbb{R}^{2n+2}, \omega_0)$
with $(T^{\ast}\mathbb{R}^{2n+2}, \omega_{\can})$.
\end{rmk}

Let $\phi$ be a contactomorphism of $(\mathbb{R}^{2n+1}, \xi_0)$,
and let
$F: \mathbb{R}^{2n+1} \times \mathbb{R}^N \rightarrow \mathbb{R}$
be a generating function of $\phi$.
By \autoref{remark: gf Reeb},
for any $t \in \mathbb{R}$
the function
\[
F_{t}: \mathbb{R}^{2n+1} \times \mathbb{R}^N \rightarrow \mathbb{R} \,,\;
F_{t} (x, y, \theta, \zeta)
= F (x, y, \theta, \zeta) - t
\]
is a generating function
of $\varphi_{-t}^{\alpha_0} \circ \phi$,
where $\{\varphi_t^{\alpha_0}\}$
denotes the Reeb flow.
By \autoref{lemma: translated chains vs discriminant points}
and \autoref{proposition: composition formula contact},
the $\mathbb{R}$-orbits of critical points
of the function
\[
(F_{t})^{\sharp k}:
\mathbb{R}^{(2n+2)k} \times \mathbb{R}^{Nk} \rightarrow \mathbb{R}
\]
are in 1--1 correspondence
with the translated $k$-chains of $\phi$ of action $tk$.
Consider now the function
\[
G_F^{(k)}: (\mathbb{R}^{(2n+2)k} \times \mathbb{R}^{Nk})
\times \mathbb{R} \rightarrow \mathbb{R} \,,\;
G_F^{(k)} (p, t) = (F_{t/k})^{\sharp k} (p) \,,
\]
and the function $P_F^{(k)}$ defined by the diagram
\[
\begin{tikzcd}
\big(G_F^{(k)}\big)^{-1}(0) \arrow{r}
\arrow[rd, "P_F^{(k)}" ']
& (\mathbb{R}^{(2n+2)k} \times \mathbb{R}^{Nk})
\times \mathbb{R}
\arrow{r}{G_F^{(k)}} \arrow{d} & \mathbb{R} \\
& \mathbb{R}  &
\end{tikzcd}
\]
where the first horizontal arrow is the inclusion
and the vertical arrow is the projection on the last factor.
The function $G_F^{(k)}$ has no critical points,
in particular $0$ is a regular value.
This implies that,
for any $(p, t) \in \big(G_F^{(k)}\big)^{-1}(0)$,
$(p, t)$ is a critical point of $P_F^{(k)}$
if and only if $p$
is a critical point of $(F_{t/k})^{\sharp k}$.
The set of critical points of $P_F^{(k)}$
is thus also invariant by the $\mathbb{R}$-action \eqref{equation: action R},
and the $\mathbb{R}$-orbits of critical points of $P_F^{(k)}$
are in 1--1 correspondence with the translated $k$-chains of $\phi$.

For $(p, t) \in \big(G_F^{(k)}\big)^{-1}(0)$ we have
\[
t = \frac{k}{\sum_{j=1}^k e^{r_j}} \, F^{\sharp k} (p) \,.
\]
We define
\[
\mathcal{P}_F^{(k)}: 
\mathbb{R}^{(2n+2)k} \times \mathbb{R}^{Nk}
\rightarrow \mathbb{R}
\]
to be the composition of $P_F^{(k)}$
with the diffeomorphism
\[
\mathbb{R}^{(2n+2)k} \times \mathbb{R}^{Nk}
\rightarrow \big(G_F^{(k)}\big)^{-1}(0) \;,\quad
p \mapsto \Big( p \,,\, \frac{k}{\sum_{j=1}^k e^{r_j}} \, F^{\sharp k} (p) \Big) \,,
\]
thus
\begin{gather*}
\mathcal{P}_F^{(k)}
(x_1, y_1, \theta_1, r_1, \dots, x_k, y_k, \theta_k, r_k, \zeta_1, \dots, \zeta_k) \\   
= \frac{k}{\sum_{j=1}^k e^{r_j}} \;
\sum_{j = 1}^k e^{r_j}
F \Big( e^{- \frac{r_j}{2}} \, \frac{x_j + x_{j+1}}{2} \,,\,
e^{- \frac{r_j}{2}} \, \frac{y_j + y_{j+1}}{2} \,,\,
\theta_{j+1} \,,\, \zeta_j \Big)\\
+ \frac{1}{2} \, (x_j y_{j+1} - x_{j+1} y_j)
+ e^{r_{j-1}} (\theta_{j} - \theta_{j+1}) \,.
\end{gather*}
The function $\mathcal{P}_F^{(k)}$ is invariant
by the $\mathbb{R}$-action \eqref{equation: action R},
and by the above discussion we have
the following result.

\begin{prop}\label{proposition: critical points contact}
The $\mathbb{R}$-orbits
of critical points of $\mathcal{P}_F^{(k)}$
are in 1--1 correspondence
with the translated $k$-chains of $\phi$,
with critical values given by the action.
\end{prop}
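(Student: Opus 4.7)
The plan is to reduce the statement to \autoref{proposition: composition formula contact} via the two intermediate functions $G_F^{(k)}$ and $P_F^{(k)}$ introduced just before the proposition, essentially making precise the discussion preceding its statement.

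First, I would observe that the level set $\big(G_F^{(k)}\big)^{-1}(0)$ is literally the graph of $\mathcal{P}_F^{(k)}$. Indeed, since $F_{t/k}(x,y,\theta,\zeta)=F(x,y,\theta,\zeta)-t/k$, unpacking the definition gives $G_F^{(k)}(p,t)=F^{\sharp k}(p)-(t/k)\sum_{j=1}^k e^{r_j}$, so $G_F^{(k)}(p,t)=0$ is equivalent to $t=\mathcal{P}_F^{(k)}(p)$. Under the diffeomorphism $p\mapsto(p,\mathcal{P}_F^{(k)}(p))$ from the ambient space to $\big(G_F^{(k)}\big)^{-1}(0)$, the function $\mathcal{P}_F^{(k)}$ corresponds to $P_F^{(k)}$. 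In particular they have the same critical points and critical values, so I can analyze $P_F^{(k)}$ instead.

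Second, I would apply Lagrange multipliers to characterize the critical points of $P_F^{(k)}$, viewed as $t|_{\{G_F^{(k)}=0\}}$. Since $\partial_t G_F^{(k)}=-(1/k)\sum_j e^{r_j}<0$, the hypersurface $\{G_F^{(k)}=0\}$ is regular and the $t$-direction is transverse to it. The criticality condition then reduces to $d_pG_F^{(k)}=0$, which (at fixed $t$) is exactly the condition that $p$ be a critical point of $(F_{t/k})^{\sharp k}$ in the sense relevant to \autoref{proposition: composition formula contact}. Moreover the critical value equals $t$.

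Third, by \autoref{remark: gf Reeb}, $F_{t/k}$ is a generating function of $\varphi_{t/k}^{\alpha_0}\circ\phi$, so \autoref{proposition: composition formula contact} applied to this generating function yields a bijection between $\mathbb{R}$-families of critical points of $(F_{t/k})^{\sharp k}$ and discriminant points of $(\varphi_{t/k}^{\alpha_0}\circ\phi)^k$. By the definition of translated $k$-chains recalled in the introduction, such a discriminant point represented by $\overline{p}_1$ unpacks as a $k$-tuple $(\overline{p}_1,\dots,\overline{p}_k)$ with $\overline{p}_{j+1}=\varphi_{t/k}^{\alpha_0}\circ\phi(\overline{p}_j)$ and $\sum_j g(\overline{p}_j)=0$, i.e., a translated $k$-chain of $\phi$ with Reeb parameter $t/k$, hence contact action $t$. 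Composing the two bijections yields a bijection between $\mathbb{R}$-families of critical points of $P_F^{(k)}$ (equivalently of $\mathcal{P}_F^{(k)}$) and translated $k$-chains of $\phi$, with critical value $t$ equal to the contact action, as required.

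The only subtle point is matching the $\mathbb{R}$-action \eqref{equation: action R} on $\mathcal{P}_F^{(k)}$ with the $\mathbb{R}$-families on the other side. This is essentially bookkeeping: the $\mathbb{R}$-action rescales the symplectization coordinate on each factor (shifting $r_j$ by $a$ and rescaling $(x_j,y_j)$ by $e^{a/2}$) while leaving $\theta_j$ fixed, which on the output side of \autoref{proposition: composition formula contact} is precisely the rescaling on lifts $\mathbb{R}^{2n+1}\to\mathbb{R}^{2n+2}$ to the symplectization that underlies the statement that discriminant points come in $\mathbb{R}$-families. Each $\mathbb{R}$-orbit of critical points of $\mathcal{P}_F^{(k)}$ therefore projects to a single translated $k$-chain of $\phi$, which completes the bijection.
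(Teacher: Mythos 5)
Your proposal is correct and follows essentially the same route as the paper, which in fact gives no separate proof but states that "the above discussion implies the result": the reduction $\mathcal{P}_F^{(k)} \leftrightarrow P_F^{(k)} \leftrightarrow (F_{t/k})^{\sharp k}$ followed by an appeal to \autoref{proposition: composition formula contact} and \autoref{remark: gf Reeb} is exactly the paper's argument. Your write-up merely makes explicit (via the graph/Lagrange-multiplier observation and the matching of the $\mathbb{R}$-actions) the steps the paper leaves implicit.
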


The function $\mathcal{P}_F^{(k)}$
is invariant by the action of $\mathbb{Z}_k$
generated by the map
\begin{gather}\label{equation: action contact}
\begin{gathered}
\underline{\sigma}_k:
(x_1, y_1, \theta_1, r_1, \dots, x_k, y_k, \theta_k, r_k, \zeta_1, \dots, \zeta_k) \\
\mapsto (x_2, y_2, \theta_2, r_2, \dots, x_k, y_k, \theta_k, r_k, x_1, y_1, \theta_1, r_1,
\zeta_2, \dots, \zeta_k, \zeta_1) \,. 
\end{gathered}
\end{gather}

Using the notations of the proof
of \autoref{proposition: composition formula contact},
$(x_1, y_1, \theta_1, r_1, \dots, x_k, y_k, \theta_k, r_k, \zeta_1, \dots, \zeta_k)$
is a critical point of $\mathcal{P}_F^{(k)}$
if and only if
$\big( (\overline{X}_1, \overline{Y}_1, \overline{\Theta}_1),
\dots, (\overline{X}_k, \overline{Y}_k, \overline{\Theta}_k) \big)$
is a translated $k$-chain of $\phi$.
It follows from the proof
of \autoref{proposition: composition formula contact}
that, under this bijection between
the $\mathbb{R}$-orbits of critical points
of $\mathcal{P}_F^{(k)}$
and the translated $k$-chains of $\phi$,
the $\mathbb{Z}_k$-action \eqref{equation: action contact}
corresponds to the $\mathbb{Z}_k$-action
on the set of translated $k$-chains of $\phi$
generated by the map that sends
a translated $k$-chain $(p_1, \dots, p_k)$
to the translated $k$-chain $(p_2, \dots, p_k, p_1)$.

Consider now the change of variables $B$
of $\mathbb{R}^{(2n+2)k} \times \mathbb{R}^{Nk}$
defined by
\[
B (x_1, y_1, \theta_1, r_1, \dots, x_k, y_k, \theta_k, r_k, \zeta_1, \dots, \zeta_k)
\]
\[
= \Big(\, \Big( \frac{k}{\sum_{j=1}^k e^{r_j}} \Big)^{\frac{1}{2}} \,\cdot\, \frac{1}{k} \, \sum_{j=1}^k x_j \,,\,
\Big( \frac{k}{\sum_{j=1}^k e^{r_j}} \Big)^{\frac{1}{2}} \,\cdot\, \frac{1}{k} \, \sum_{j=1}^k y_j \,,\,
\frac{1}{k} \, \sum_{j=1}^k \theta_j \,,\, \frac{1}{k} \, \sum_{j=1}^k r_j \,,\,
\]
\[
\Big( \frac{k}{\sum_{j=1}^k e^{r_j}} \Big)^{\frac{1}{2}} \,\cdot\, \frac{1}{k} \, (x_2 - x_1)
\,,\,
\Big( \frac{k}{\sum_{j=1}^k e^{r_j}} \Big)^{\frac{1}{2}} \,\cdot\, \frac{1}{k} \, (y_2 - y_1)
\,,\, \frac{1}{k} \, (\theta_2 - \theta_1) \,,\, \frac{1}{k} \, (r_2 - r_1) \,,\, \dots \,,
\]
\[
\Big( \frac{k}{\sum_{j=1}^k e^{r_j}} \Big)^{\frac{1}{2}} \,\cdot\, \frac{1}{k} \, (x_k - x_1)
\,,\,
\Big( \frac{k}{\sum_{j=1}^k e^{r_j}} \Big)^{\frac{1}{2}} \,\cdot\, \frac{1}{k} \, (y_k - y_1)
\,,\, \frac{1}{k} \, (\theta_k - \theta_1) \,,\, \frac{1}{k} \, (r_k - r_1) \,,
\]
\[
\Big( \frac{k \, e^{r_1}}{\sum_{j=1}^k e^{r_j}} \Big)^{\frac{1}{2}} \,\cdot\, \zeta_1, \dots,
\Big( \frac{k \, e^{r_k}}{\sum_{j=1}^k e^{r_j}} \Big)^{\frac{1}{2}} \,\cdot\, \zeta_k \,\Big) \,.
\]
Observe that
\[
B \circ \underline{\sigma}_k \circ B^{-1}
(x_1, y_1, \theta_1, r_1, \dots, x_k, y_k, \theta_k, r_k, \zeta_1, \dots, \zeta_k)
\]
\[
= (x_1, y_1, \theta_1, r_1,
x_3 - x_2, y_3 - y_2, \theta_3 - \theta_2, r_3 - r_2,
\dots, x_k - x_2, y_k - y_2, \theta_k - \theta_2, r_k - r_2,
\]
\[
-x_2, -y_2, - \theta_2, - r_2, \zeta_2, \dots, \zeta_k, \zeta_1) \,,
\]
thus $B \circ \underline{\sigma}_k \circ B^{-1}$
extends to a map on
$S^{2n} \times \mathbb{R} \times \mathbb{R} \times \mathbb{R}^{(2n+2) (k-1)} \times \mathbb{R}^{Nk}$
that descends to a map $\overline{\underline{\sigma}_k}$
generating a $\mathbb{Z}_k$-action on 
$S^{2n} \times S^1 \times \mathbb{R}^{(2n+2) (k-1)} \times \mathbb{R}^{Nk}$.

\begin{prop}\label{proposition: extend to sphere contact}
Let $F: \mathbb{R}^{2n+1} \times \mathbb{R}^N \rightarrow \mathbb{R}$
be a generating function quadratic at infinity
of a compactly supported contactomorphism $\phi$
of $(\mathbb{R}^{2n} \times S^1, \xi_0)$
contact isotopic to the identity,
and consider the associated function
$\mathcal{P}_F^{(k)}: \mathbb{R}^{(2n+2)k} \times \mathbb{R}^{Nk} \rightarrow \mathbb{R}$.
Then $\mathcal{P}_F^{(k)} \circ B^{-1}$
extends to a continuous function
on $S^{2n} \times \mathbb{R}^2 \times \mathbb{R}^{(2n+2)(k-1)} \times \mathbb{R}^{Nk}$
that descends to a continuous function
\[
\overline{\mathcal{P}_F^{(k)}}:
S^{2n} \times S^1 \times \mathbb{R}^{(2n+2)(k-1)} \times \mathbb{R}^{Nk}
\rightarrow \mathbb{R} \,,
\]
invariant by the $\mathbb{Z}_k$-action
generated by $\overline{\underline{\sigma}_k}$.
Moreover,
if $F$ is special then $\overline{\mathcal{P}_F^{(k)}}$ is smooth
and the norm of its gradient
is bounded away from zero outside a compact set.
\end{prop}

\begin{proof}
For $e = (x_1, y_1, \theta_1, r_1, \dots, x_k, y_k, \theta_k, r_k, \zeta_1, \dots, \zeta_k)$
we have
\[
\mathcal{P}_F^{(k)} \circ B^{-1}(e)
= \sum_{j=1}^k R \cdot R_j \cdot F \big(P_j(e)\big)
+ \sum_{j = 2}^{k-1} \frac{k^2}{2} \, \left(x_j y_{j+1} - x_{j+1}y_j\right)
+ \sum_{j = 2}^k k \, (R \cdot R_{j-1} - R \cdot R_{j-2}) \, \theta_j \,,
\]
with the usual cyclic convention on the indices,
where $R$, $R_j$ and $P_j$ are given by the following expressions,
in which the index $l$ is always in the set $\{2, \dots, k\}$:
\begin{align*}
&R = R (r_2, \dots, r_k)
= \frac{k}{ e^{- \sum_l r_l} + \sum_{j=2}^k e^{(k-1)r_j - \sum_{l \neq j} r_l } } \,, \\
&R_1 = R_1 (r_2, \dots, r_k) = e^{- \sum_l r_l} \,, \\
&R_j = R_j (r_2, \dots, r_k) =  e^{(k-1)r_j - \sum_{l \neq j} r_l }
\quad \text{ for  } j=2, \dots, k \,,
\end{align*}
\[
P_1 (e) = \Big( (R \cdot R_1)^{-\frac{1}{2}} \big(x_1 + \frac{k-2}{2} \, x_2
- \sum_{l \neq 2} x_l \big) \,,\,
(R \cdot R_1)^{-\frac{1}{2}} \big(y_1 + \frac{k-2}{2} \, y_2 - \sum_{l \neq 2} y_l) \,,
\]
\[
\theta_1 + (k-1)\theta_2 - \sum_{l \neq 2} \theta_l \,,\,
(R \cdot R_1)^{-\frac{1}{2}} \, \zeta_1 \Big) \,,
\]
\[
P_j (e) = \Big( (R \cdot R_j)^{-\frac{1}{2}} \big(x_1 + \frac{k-2}{2} \, x_j + \frac{k-2}{2} \, x_{j+1}
- \sum_{l \neq j, j+1} x_l) \,,\,
\]
\[
(R \cdot R_j)^{-\frac{1}{2}} \big(y_1 + \frac{k-2}{2} \, y_j + \frac{k-2}{2} \, y_{j+1}
- \sum_{l \neq j, j+1} y_l) \,,\,
\theta_1 + (k-1) \theta_{j+1} - \sum_{l \neq j+1} \theta_l \,,\,
(R \cdot R_j)^{-\frac{1}{2}} \, \zeta_j \Big)
\]
for $j = 2, \dots, k-1$,
and
\[
P_k (e) = \Big( (R \cdot R_k)^{-\frac{1}{2}} \big(x_1 + \frac{k-2}{2} \, x_k - \sum_{l \neq k} x_l) \,,\,
(R \cdot R_k)^{-\frac{1}{2}} \big(y_1 + \frac{k-2}{2} \, y_k - \sum_{l \neq k} y_k) \,,\,
\]
\[
\theta_1 - \sum_{l} \theta_l \,,\,
(R \cdot R_k)^{-\frac{1}{2}} \, \zeta_k \Big) \,.
\]

The function $\mathcal{P}_F^{(k)} \circ B^{-1}$
extends to a continuous function $\underline{\mathcal{P}}_F^{(k)}$ on
$S^{2n} \times \mathbb{R}^2 \times \mathbb{R}^{(2n+2)(k-1)} \times \mathbb{R}^{Nk}$
by setting
\[
\underline{\mathcal{P}}_F^{(k)}
(p_{\infty}, \theta_1, r_1, x_2, y_2, \theta_2, r_2, \dots, x_k, y_k, \theta_k, r_k,
\zeta_1, \dots, \zeta_k)
\]
\[
= \sum_{j=1}^{k-1} R \cdot R_j \cdot \underline{F}
\Big(p_{\infty} \,,\,
\theta_1 + (k-1) \theta_{j+1} - \sum_{l \neq j+1} \theta_l \,,\,
(R \cdot R_j)^{-\frac{1}{2}} \, \zeta_j \Big)
+ R \cdot R_k \cdot \underline{F}
\Big( p_{\infty} \,,\,
\theta_1 - \sum_{l} \theta_l \,,\,
(R \cdot R_k)^{-\frac{1}{2}} \, \zeta_k \Big)
\]
\[
+ \sum_{j = 2}^{k-1} \frac{k^2}{2} \, \left(x_j y_{j+1} - x_{j+1}y_j\right)
+ \sum_{j = 2}^{k} k \, (R \cdot R_{j-1} - R \cdot R_{j-2}) \, \theta_j
\,.
\]
The function $\underline{\mathcal{P}}_F^{(k)}$
does not depend on the coordinate $r_1$.
Moreover,
since $F$ is invariant by the $\mathbb{Z}$-action \eqref{equation: action Z for F},
$\underline{\mathcal{P}}_F^{(k)}$ is invariant
by translation by $1$ in the coordinate $\theta_1$,
and thus descends to a continuous function
\[
\overline{\mathcal{P}_F^{(k)}}:
S^{2n} \times S^1 \times \mathbb{R}^{(2n+2)(k-1)} \times \mathbb{R}^{Nk}
\rightarrow \mathbb{R} \,,
\]
which by definition is invariant by the $\mathbb{Z}_k$-action
generated by $\overline{\underline{\sigma}_k}$,
and smooth if $F$ is special.

Suppose now that $F$ is special,
hence $\overline{\mathcal{P}_F^{(k)}}$ smooth.
In order to prove that the norm of the gradient of $\overline{\mathcal{P}_F^{(k)}}$
is bounded away from zero outside a compact set,
we show that any sequence of points $\{e_l\}$
with $\big\lVert \, \nabla \overline{\mathcal{P}_F^{(k)}} (e_l) \, \big\rVert \rightarrow 0$
is contained in a compact set.
For this,
it is equivalent to prove that for any sequence
\[
e_l = (x_1^{(l)}, y_1^{(l)}, \theta_1^{(l)}, r_1^{(l)}, \dots,
x_k^{(l)}, y_k^{(l)}, \theta_k^{(l)}, r_k^{(l)}, \zeta_1^{(l)}, \dots, \zeta_k^{(l)})
\]
with $\big\lVert \, \nabla (\mathcal{P}_F^{(k)} \circ B^{-1}) (e_l) \, \big\rVert \rightarrow 0$
the norms of $x_j^{(l)}, y_j^{(l)}, \theta_j^{(l)}, r_j^{(l)}$
for $j = 2, \dots, k$
and of $\zeta_j^{(l)}$ for $j = 1, \dots, k$
are bounded.
This is clear for $x_j^{(l)}$, $y_j^{(l)}$ and $\zeta_j^{(l)}$,
since $\mathcal{P}_F^{(k)} \circ B^{-1}$
is quadratic at infinity
in these coordinates.
Indeed,
$R \cdot R_j \cdot F \big(P_j(e)\big) = F_{\infty} (\zeta_j)$
for $e$ outside a compact set,
and the norm of the derivatives of $R \cdot R_j \cdot F \circ P_j$
with respect to $x_j$ and $y_j$ is bounded
since $\lVert d_1F \rVert$, $\lVert d_2F \rVert$, $\lvert d_3F \rvert$
and the functions $R \cdot R_j$ are bounded.
For the other coordinates we can argue as follows.
We have
\begin{equation}\label{equation: bounded 1}
0 = \lim_{l \to \infty}
\Big( \frac{\partial}{\partial \theta_1} (\mathcal{P}_F^{(k)} \circ B^{-1}) (e_l)
+ \frac{\partial}{\partial \theta_j} (\mathcal{P}_F^{(k)} \circ B^{-1}) (e_l) \Big)
\end{equation}
\[
= \lim_{l \to \infty} \Big( k \, (R \cdot R_{j-1}) (r_2^{(l)}, \dots, r_k^{(l)})
\, \big(d_3F \big(P_{j-1} (e_l)\big) + 1 \big) \Big)
- \lim_{l \to \infty} \Big( k \, (R \cdot R_{j-2}) (r_2^{(l)}, \dots, r_k^{(l)})\Big)
\]
for $j = 2, \dots, k$,
and
\begin{equation}\label{equation: bounded 2}
0 = \lim_{l \to \infty}
\Big( \frac{\partial}{\partial \theta_1} (\mathcal{P}_F^{(k)} \circ B^{-1}) (e_l)
- \sum_{j=2}^k \frac{\partial}{\partial \theta_j} (\mathcal{P}_F^{(k)} \circ B^{-1}) (e_l) \Big)
\end{equation}
\[
= \lim_{l \to \infty} \Big( k \, (R \cdot R_k) (r_2^{(l)}, \dots, r_k^{(l)})
\, \big(d_3F \big(P_k (e_l)\big) + 1 \big) \Big)
- \lim_{l \to \infty} \Big( k \, (R \cdot R_{k-1}) (r_2^{(l)}, \dots, r_k^{(l)})\Big) \,.
\]
Since $\sum_{j=1}^k R \cdot R_j = k$,
we cannot have $\lim_{l \to \infty} k \, (R \cdot R_j) (r_2^{(l)}, \dots, r_k^{(l)}) = 0$
for all $j = 1, \dots, k$.
On the other hand,
since $\lvert d_3F \rvert$ is bounded,
the equations \eqref{equation: bounded 1} and \eqref{equation: bounded 2}
imply that if this limit is zero
for some $j$ then it is zero for all $j = 1, \dots, k$.
We thus conclude that $\lim_{l \to \infty} k \, (R \cdot R_j) (r_2^{(l)}, \dots, r_k^{(l)}) \neq 0$
and $\lim_{l \to \infty} \big(d_3F \big(P_j (e_l)\big) + 1 \big) \neq 0$
for all $j = 1, \dots, k$.
Equations \eqref{equation: bounded 1} and \eqref{equation: bounded 2}
then give
\[
\lim_{l \to \infty} \Big( d_3F \big(P_{j} (e_l)\big) + 1 
- \frac{R \cdot R_{j-1}}{R \cdot R_{j}} \, (r_2^{(l)}, \dots, r_k^{(l)})\Big) = 0 \,,
\]
thus
\begin{equation}\label{equation: bounded 3}
\lim_{l \to \infty} \frac{R_{j-1}}{R_{j}} \, (r_2^{(l)}, \dots, r_k^{(l)})
= \lim_{l \to \infty} d_3F \big(P_{j} (e_l)\big) + 1 \,.
\end{equation}
Since
\[
\frac{R_{j-1}}{R_{j}}
= \begin{cases}
e^{k r_k} \text{ for } j = 1 \\
e^{-k r_2} \text{ for } j = 2 \\
e^{k r_{j-1} - k r_j} \text{ for } j = 3, \dots, k \,,
\end{cases}
\]
and since the right hand side of \eqref{equation: bounded 3}
is bounded and different than zero,
we conclude that the sequences
$\lvert r_2^{(l)} \rvert, \dots, \lvert r_k^{(l)} \rvert$
are bounded.
Pose now
\begin{equation}\label{equation: theta bounded}
\mathcal{R}_j = R \cdot R_{j-1} - R \cdot R_{j-2}
\end{equation}
for $j = 2, \dots, k$.
The map
\[
(r_2, \dots, r_k) \mapsto (R \cdot R_2 \,, \dots ,\, R \cdot R_k)
\mapsto (\mathcal{R}_2, \dots, \mathcal{R}_k)
\]
is a diffeomorphism into its image.
Indeed,
recall that $R \cdot R_1 = k - \sum_{j=2}^k R \cdot R_j$.
The second map is thus a linear diffeomorphism
and the first map is a diffeomorphism into its image,
since for every $j = 2, \dots, k$ we have
\[
e^{-kr_j} = \frac{R \cdot R_1}{R \cdot R_j}
= \frac{k - R \cdot R_2 - \dots - R \cdot R_k}{R \cdot R_j} \,.
\]
We have seen that
$\lvert r_2^{(l)} \rvert$, $\dots$, $\lvert r_k^{(l)} \rvert$
are bounded.
We thus have
\[
\frac{\partial}{\partial \mathcal{R}_j} (e_l)
= a_{1,j}^{(l)} \, \frac{\partial}{\partial r_1} (e_l)
+ \dots + a_{k,j}^{(l)} \, \frac{\partial}{\partial r_k} (e_l)
\]
for bounded sequences $\{a_{i, j}^{(l)}\}$
of real numbers,
and so
\[
0 = \lim_{l \to \infty}
\Big( \frac{\partial}{\partial \mathcal{R}_j} (\mathcal{P}_F^{(k)} \circ B^{-1}) (e_l) \Big)
= \lim_{l \to \infty}
\Big( \frac{\partial}{\partial \mathcal{R}_j}
\Big(\sum_{h=1}^k R \cdot R_h \cdot F \circ P_h \Big) (e_l) + \theta_j^{(l)} \Big)
\]
\[
= \lim_{l \to \infty}
\Big( \sum_{i=1}^k a_{i, j}^{(l)} \, \frac{\partial}{\partial r_i}
\Big(\sum_{h=1}^k R \cdot R_h \cdot F \circ P_h \Big) (e_l) \Big)
+ \lim_{l \to \infty} \theta_j^{(l)} \,.
\]
Since the first limit is finite,
we conclude that $\lvert \theta_j^{(l)} \rvert$ is bounded
for $j = 2, \dots, k$.
\end{proof}

The proof of the following proposition is immediate,
and left to the reader.

\begin{prop}\label{proposition: stabilization etc contact}
\begin{itemize}
\item[(i)]
Let $F: \mathbb{R}^{2n+1} \times \mathbb{R}^N \rightarrow \mathbb{R}$
be a generating function quadratic at infinity
of a compactly supported contactomorphism
of $(\mathbb{R}^{2n} \times S^1, \xi_0)$
contact isotopic to the identity.
Then for every non-degenerate quadratic form
$Q: \mathbb{R}^{N'} \rightarrow \mathbb{R}$
we have
\[
\overline{\mathcal{P}_{F \oplus Q}^{(k)}}
= \overline{\mathcal{P}_F^{(k)}} \oplus Q^{\oplus k} \,.
\]
\item[(ii)]
Let $F_0: \mathbb{R}^{2n+1} \times \mathbb{R}^N \rightarrow \mathbb{R}$
and $F_1: \mathbb{R}^{2n+1} \times \mathbb{R}^N \rightarrow \mathbb{R}$
be generating functions quadratic at infinity
of a compactly supported contactomorphism
of $(\mathbb{R}^{2n} \times S^1, \xi_0)$
contact isotopic to the identity
such that $\overline{F_0} = \overline{F_1} \circ \overline{\Phi}$
for a fibre preserving diffeomorphism $\overline{\Phi}$
of $S^{2n} \times S^1 \times \mathbb{R}^N$.
Denote by $\Phi$ the induced fibre preserving diffeomorphism
of $\mathbb{R}^{2n+1} \times \mathbb{R}^N$,
so that $F_0 = F_1 \circ \Phi$.
Then
\[
\mathcal{P}_{F_0}^{(k)}
= \mathcal{P}_{F_1}^{(k)} \circ \Phi_k \,,
\]
where $\Phi_k$ is the diffeomorphism
\[
(x_1, y_1, \theta_1, r_1, \dots , x_k, y_k, \theta_k, r_k,
\zeta_1, \dots , \zeta_k)
\]
\[
\mapsto (x_1, y_1, \theta_1, r_1, \dots , x_k, y_k, \theta_k, r_k,
\zeta_1', \dots , \zeta_k')
\]
of $\mathbb{R}^{(2n+2)k} \times \mathbb{R}^{Nk}$,
equivariant with respect to the $\mathbb{Z}_k$-action
generated by $\underline{\sigma}_k$,
defined by
\[
\Big( e^{-\frac{r_j}{2}} \frac{x_j + x_{j+1}}{2},
e^{-\frac{r_j}{2}} \frac{y_j + y_{j+1}}{2}, \theta_{j+1},
\zeta_j' \Big)
\]
\[
= \Phi \Big( e^{-\frac{r_j}{2}} \frac{x_j + x_{j+1}}{2},
e^{-\frac{r_j}{2}} \frac{y_j + y_{j+1}}{2}, \theta_{j+1},
\zeta_j \Big) \,.
\]
Moreover, $B \circ \Phi_k \circ B^{-1}$
extends to a fibre preserving homeomorphism 
of $S^{2n} \times \mathbb{R} \times \mathbb{R} \times \mathbb{R}^{(2n+2)(k-1)} \times \mathbb{R}^{Nk}$
that descends to a fibre preserving homeomorphism $\overline{\Phi_k}$
of $S^{2n} \times S^1 \times \mathbb{R}^{(2n+2)(k-1)} \times \mathbb{R}^{Nk}$,
equivariant with respect to the $\mathbb{Z}_k$-action
generated by $\overline{\underline{\sigma}_k}$,
and we have
\[
\overline{\mathcal{P}_{F_0}^{(k)}}
=\overline{\mathcal{P}_{F_1}^{(k)}} \circ \overline{\Phi_k} \,.
\]
\end{itemize}
\end{prop}


\section{\texorpdfstring{$\mathbb{Z}_k$}{}-equivariant generating function homology for domains of \texorpdfstring{$(\mathbb{R}^{2n} \times S^1, \xi_0)$}{}}\label{section: equivariant contact homology}

As in the symplectic case,
in order to define the $\mathbb{Z}_k$-equivariant
generating function homology
of domains of $(\mathbb{R}^{2n} \times S^1, \xi_0)$
the first step is to define
the $\mathbb{Z}_k$-equivariant homology
of a compactly supported contact isotopy $\{\phi_t\}_{t \in [0, 1]}$
of $(\mathbb{R}^{2n} \times S^1, \xi_0)$,
by the following category theoretical construction.

Consider the category $\mathcal{F} \big( \{\phi_t\}\big)$
whose objects are the 1-parameter families
$F_t: \mathbb{R}^{2n+1} \times \mathbb{R}^N \rightarrow \mathbb{R}$
of special generating functions quadratic at infinity for $\{\phi_t\}$
such that $F_0$ is a non-degenerate quadratic form,
and whose morphisms $F_t^{(0)} \rightarrow F_t^{(1)}$
are the triples $(Q_0, Q_1, \overline{\Phi})$
with $Q_0$ and $Q_1$ non-degenerate quadratic forms
and $\overline{\Phi} = \overline{\Phi_{s, t}}$ a 2-parameter family
of fibre preserving diffeomorphisms
such that $\overline{\Phi_{0, t}} = \id$,
$\overline{F_t^{(0)}} \oplus Q_0
= ( \overline{F_t^{(1)}} \oplus Q_1) \circ \overline{\Phi_{1, t}}$,
and $(\overline{F_t^{(0)}} \oplus Q_0) \circ \Phi_{s, 0}^{-1}$
is a contractible loop,
with the same composition law as in the symplectic case
\eqref{equation: composition}.

For $a \leq b$ in $\mathbb{R} \cup \{ \pm \infty \}$
that are not equal to the action
of a translated $k$-chain of $\phi_1$
we define a functor $G_{\mathbb{Z}_k, \ast}^{(a,b]}$
from $\mathcal{F} \big(\{\phi_t\}\big)$
to the category of graded modules over $\mathbb{Z}_k$
by posing
\[
G_{\mathbb{Z}_k, \ast}^{(a,b]} (F_t)
= H_{\mathbb{Z}_k, \ast + k\ind ((F_1)_{\infty}) + n(k-1)}
\big( \{ \overline{\mathcal{P}_{F_1}^{(k)}} \leq b \} \,,\,
\{ \overline{\mathcal{P}_{F_1}^{(k)}} \leq a \} ;
\mathbb{Z}_k \big) \,,
\]
with the convention that
$\overline{\mathcal{P}_{F_1}^{(k)}} \leq \infty$ means
$\overline{\mathcal{P}_{F_1}^{(k)}} \leq c$
for $c$ bigger than all the actions of translated $k$-chains of $\phi_1$
and $\overline{\mathcal{P}_{F_1}^{(k)}} \leq - \infty$
means $\overline{\mathcal{P}_{F_1}^{(k)}} \leq c$
for $c$ smaller than all the actions of translated $k$-chains of $\phi_1$,
and by associating to any morphism
$(Q_0, Q_1, \overline{\Phi}): F_t^{(0)} \rightarrow F_t^{(1)}$
a homomorphism
\[
G_{\mathbb{Z}_k, \ast}^{(a,b]} (Q_0, Q_1, \overline{\Phi}):
G_{\mathbb{Z}_k, \ast}^{(a,b]} (F_t^{(1)})
\rightarrow  G_{\mathbb{Z}_k, \ast}^{(a,b]} (F_t^{(0)})
\]
as follows.
Similarly to the symplectic case,
$(Q_0, Q_1, \overline{\Phi})$ can be written as the composition
\[
F_t^{(0)} \xrightarrow{ (Q_0, 0, \id) } F_t^{(0)} \oplus Q_0
\xrightarrow{ (0, 0, \overline{\Phi}) } F_t^{(1)} \oplus Q_1
\xrightarrow{ (0, Q_1, \id) } F_t^{(1)} \,.
\]
By \autoref{proposition: stabilization etc contact} (i),
\[
\overline{ \mathcal{P}_{F_1^{(1)} \oplus Q_1}^{(k)} }
= \overline{ \mathcal{P}_{F_1^{(1)}}^{(k)} } \oplus (Q_1)^{\oplus k} \,.
\]
We define $G_{\mathbb{Z}_k, \ast}^{(a,b]} (0, Q_1, \id)$
to be the isomorphism induced by
the $\mathbb{Z}_k$-equivariant Thom isomorphism
on the $\mathbb{Z}_k$-invariant vector bundle
on which $(Q_1)^{\oplus k}$ is negative definite.
Similarly, we define $G_{\mathbb{Z}_k, \ast}^{(a,b]} (Q_0, 0, \id)$
to be the inverse of  the isomorphism induced by
the $\mathbb{Z}_k$-equivariant Thom isomorphism
on the $\mathbb{Z}_k$-invariant vector bundle
on which $(Q_0)^{\oplus k}$ is negative definite.
By \autoref{proposition: stabilization etc contact} (ii),
\[
\overline{ \mathcal{P}_{F_1^{(0)} \oplus Q_0}^{(k)} } =
\overline{ \mathcal{P}_{F_1^{(1)} \oplus Q_1}^{(k)} }
\circ \overline{(\Phi_{1,1})_k} \,.
\]
We define
$G_{\mathbb{Z}_k, \ast}^{(a,b]}(0, 0, \overline{\Phi})$
to be the inverse of the isomorphism induced by $\overline{(\Phi_{1,1})_k}$.
Finally we define
\[
G_{\mathbb{Z}_k, \ast}^{(a,b]} (Q_0, Q_1, \overline{\Phi})
= G_{\mathbb{Z}_k, \ast}^{(a,b]} (Q_0, 0, \id)
\circ G_{\mathbb{Z}_k, \ast}^{(a,b]} (0, 0, \overline{\Phi})
\circ G_{\mathbb{Z}_k, \ast}^{(a,b]} (0, Q_1, \id) \,,
\]
and notice that this homomorphism
is by construction always an isomorphism.

As in the symplectic case,
given two morphisms
$(Q_0, Q_1, \overline{\Phi}): F_t^{(0)} \rightarrow F_t^{(1)}$
and $(Q_1', Q_2', \overline{\Psi}): F_t^{(1)} \rightarrow F_t^{(2)}$,
naturality of the $\mathbb{Z}_k$-equivariant Thom isomorphism
implies that
\[
G_{\mathbb{Z}_k, \ast}^{(a,b]} \big( (Q_1', Q_2', \overline{\Psi})
\circ (Q_0, Q_1, \overline{\Phi})\big)
= G_{\mathbb{Z}_k, \ast}^{(a,b]} (Q_0, Q_1, \overline{\Phi})
\circ G_{\mathbb{Z}_k, \ast}^{(a,b]} (Q_1', Q_2', \overline{\Psi}) \,,
\]
thus the functor $G_{\mathbb{Z}_k,*}^{(a,b]}$
is well-defined.

We define the $\mathbb{Z}_k$-equivariant homology
of the contact isotopy $\{\phi_t\}_{t \in [0, 1]}$
by
\[
G_{\mathbb{Z}_k,*}^{(a,b]} \big(\{\phi_t\}\big)
= \underset{}{\varprojlim} \;
\Big\{ G_{\mathbb{Z}_k,*}^{(a,b]} (F_t)
\Big\}_{ F_t \in \mathcal{F} (\{\phi_t\})} \,.
\]
As in the symplectic case
(using now \autoref{proposition: Serre fibration contact}
and \autoref{lemma: lemmas gf contact} (iii)),
any two objects $F_t^{(0)}$ and $F_t^{(1)}$
in $\mathcal{F} (\{\phi_t\})$
are related by a morphism,
and all morphisms between them
induce the same isomorphism
$G_{\mathbb{Z}_k, \ast}^{(a,b]} (F_t^{(1)})
\rightarrow  G_{\mathbb{Z}_k, \ast}^{(a,b]} (F_t^{(0)})$.
Thus,
as in \autoref{proposition: commuting isomorphism},
for every object $F_t$ in $\mathcal{F} \big(\{\phi_t\}\big)$
the natural homomorphism
$i_{F_t}: G_{\mathbb{Z}_k,*}^{(a,b]} \big(\{\phi_t\}\big)
\rightarrow G_{\mathbb{Z}_k,*}^{(a,b]} (F_t)$
is an isomorphism.

We now show that if $a, b \in k \mathbb{Z} \cup \{\pm \infty\}$
then the $\mathbb{Z}_k$-equivariant homology groups
$G_{\mathbb{Z}_k, \ast}^{(a, b]} \big(\{\phi_t\}\big)$
are invariant by conjugation.
The key ingredient is the following basic fact.

\begin{lemma}\label{lemma: invariance translated chains}
Let $\phi$ and $\psi$ be two compactly supported contactomorphisms
of $(\mathbb{R}^{2n} \times S^1, \xi_0)$ contact isotopic to the identity,
and let $t \in \mathbb{Z}$.
Then $(p_1, \dots , p_k)$ is a translated $k$-chain of $\phi$
of action $tk$
if and only if $\big( \psi(p_1), \dots , \psi(p_k) \big)$
is a translated $k$-chain of $\psi \circ \phi \circ \psi^{-1}$
of action $tk$.
\end{lemma}

\begin{proof}
Suppose that $(p_1, \dots , p_k)$ is a translated $k$-chain of $\phi$ of action $tk$.
By definition, there is then a $k$-tuple of points $(P_1, \dots , P_k)$
of $\mathbb{R}^{2n+1}$ that projects to $(p_1, \dots , p_k)$
and is a translated $k$-chain of the lift $\Phi$ of $\phi$
to $(\mathbb{R}^{2n+1}, \xi_0)$.
Thus $g (P_1) + \cdots + g(P_k) = 0$,
where $g$ denotes the conformal factor of $\Phi$,
and $P_{j+1} = \varphi_{-t}^{\alpha_0} \circ \Phi \, (P_j)$
for all $j$.
But then,
denoting by $\Psi$ the lift of $\psi$
to $(\mathbb{R}^{2n+1}, \xi_0)$,
\[
\varphi_{-t}^{\alpha_0} \circ (\Psi \circ \Phi \circ \Psi^{-1}) \, \big(\Psi(P_j)\big)
= \Psi \circ \varphi_{-t}^{\alpha_0} \circ \Phi \, (P_j)
= \Psi (P_{j+1}) \,,
\]
where we have used that $t \in \mathbb{Z}$
and that $\Psi$ is equivariant by translation by $1$
in the Reeb direction.
Moreover,
the conformal factor of $\Psi \circ \Phi \circ \Psi^{-1}$
is $g \circ \Psi^{-1}$
and we have
\[
g \circ \Psi^{-1} \big(\Psi(P_1)\big) + \cdots + g \circ \Psi^{-1} \big(\Psi(P_k)\big)
= g (P_1) + \cdots + g(P_k) = 0 \,.
\]
This shows that $\big(\Psi(P_1), \cdots, \Psi(P_k)\big)$
is a translated $k$-chain of $\Psi \circ \Phi \circ \Psi^{-1}$ of action $tk$,
hence $\big( \psi(p_1), \dots , \psi(p_k) \big)$
is a translated $k$-chain of $\psi \circ \phi \circ \psi^{-1}$ of action $tk$.
The converse implication is similar.
\end{proof}

Let $\{\phi_t\}_{t \in [0, 1]}$ be a compactly supported contact isotopy
of $(\mathbb{R}^{2n} \times S^1, \xi_0)$,
and suppose that $a \leq b$ are in $k\mathbb{Z} \cup \{\pm \infty\}$
and are not equal to the action
of any translated $k$-chain of $\phi_1$.
By \autoref{lemma: special},
there is a 2-parameter family
$F_t^{(s)}: \mathbb{R}^{2n+1} \times \mathbb{R}^N \rightarrow \mathbb{R}$
of special generating functions quadratic at infinity
for the 2-parameter family $\{\psi_s \circ \phi_t \circ \psi_s^{-1} \}$
such that $F_0^{(s)}$ is a non-degenerate quadratic form for every $s$
and $(F_t^{(s)})_{\infty}$ does not depend on $s$ and $t$.
By \autoref{proposition: critical points contact},
the critical points of $\mathcal{P}_{F_1^{(s)}}^{(k)}$
are in 1--1 correspondence
with the translated $k$-chains of
$\psi_s \circ \phi_1 \circ \psi_s^{-1}$,
and the critical values are given
by the actions.
Since $a$ and $b$ are in $k\mathbb{Z} \cup \{\pm \infty\}$
and are regular values of $\mathcal{P}_{F_1^{(0)}}^{(k)}$,
it thus follows from \autoref{lemma: invariance translated chains}
that they are regular values
of $\mathcal{P}_{F_1^{(s)}}^{(k)}$,
hence of $\overline{\mathcal{P}_{F_1^{(s)}}^{(k)}}$,
for all $s$.
Since $(F_1^{(s)})_{\infty}$ does not depend on $s$,
$\frac{d}{ds} F_1^{(s)}$ is bounded
and so $\frac{d}{ds} \, \overline{ \mathcal{P}_{F_1^{(s)}}^{(k)}}$ is bounded.
Moreover,
by \autoref{proposition: extend to sphere contact}
the norm of the gradient of each $\overline{ \mathcal{P}_{F_1^{(s)}}^{(k)}}$
is bounded away from zero outside a compact set.
We can thus apply \autoref{lemma: continuation}
to obtain a $\mathbb{Z}_k$-equivariant isotopy
$\{\theta_s\}$ of $S^{2n} \times S^1 \times \mathbb{R}^{(2n+2)(k-1)} \times \mathbb{R}^{Nk}$
mapping the sublevel sets of $\overline{\mathcal{P}_{F_1^{(0)}}^{(k)}}$
at $a$ and $b$ to those of $\overline{\mathcal{P}_{F_1^{(s)}}^{(k)}}$.
In particular, this gives an isomorphism
\[
(\theta_1)_{\ast}:
G_{\mathbb{Z}_k, \ast}^{(a, b]} (F_t^{(0)})
\rightarrow G_{\mathbb{Z}_k, \ast}^{(a, b]} (F_t^{(1)}) \,,
\]
and so an isomorphism
\[
\lambda_{F_t^{(s)}} :=
(i_{F_t^{(1)}})^{-1} \circ (\theta_1)_{\ast} \circ i_{F_t^{(0)}}:
G_{\mathbb{Z}_k, \ast}^{(a, b]} \big(\{\phi_t\}\big)
\rightarrow
G_{\mathbb{Z}_k, \ast}^{(a, b]}
\big(\{\psi_1 \circ \phi_t \circ \psi_1^{-1} \} \big) \,.
\]
The next proposition can be proved
as the corresponding result in the symplectic case
(Proposition \ref{proposition: invariance by conjugation symplectic bis}).

\begin{prop}\label{proposition: invariance by conjugation contact bis}
Let $\{\phi_t\}$ and $\{\psi_t\}_{t \in [0, 1]}$
be compactly supported contact isotopies
of $(\mathbb{R}^{2n} \times S^1, \xi_0)$,
and suppose that $a \leq b$ in $k\mathbb{Z} \cup \{\pm \infty\}$
are not equal to the action
of any translated $k$-chain of $\phi_1$.
Then the isomorphism
\[
\lambda_{\{\psi_t\}}:
G_{\mathbb{Z}_k, \ast}^{(a, b]} \big(\{\phi_t\}\big)
\rightarrow 
G_{\mathbb{Z}_k, \ast}^{(a, b]}
\big( \{ \psi_1 \circ \phi_t \circ \psi_1^{-1} \} \big)
\]
defined by $\lambda_{\{\psi_t\}} = \lambda_{F_t^{(s)}}$
for any 2-parameter family
of special generating functions quadratic at infinity $F_t^{(s)}$
for the 2-parameter family of contactomorphisms
$\{\psi_s \circ \phi_t \circ \psi_s^{-1} \}$
is well-defined,
i.e.\ it does not depend on the choice of $F_t^{(s)}$.
\end{prop}

Consider now a domain $\mathcal{V}$
of $(\mathbb{R}^{2n} \times S^1, \xi_0)$.
Let $\mathcal{H}^k_{a, b} (\mathcal{V})$
be the set of compactly supported
contact isotopies $\{\phi_t\}_{t \in [0, 1]}$
supported in $\mathcal{V}$
such that $a$ and $b$ are not equal to the action
of any translated $k$-chain of $\phi_1$.
Define a partial order $\leq$
on $\mathcal{H}^k_{a, b} (\mathcal{V})$
by posing $\{\phi_t^{(1)}\} \leq \{\phi_t^{(2)}\}$
if $\{\phi_t^{(2)} \circ (\phi_t^{(1)})^{-1}\}$
is a non-negative contact isotopy.
Similarly to the symplectic case,
for $\{\phi_t^{(i)}\}$ and $\{\phi_t^{(j)}\}$
in $\mathcal{H}^k_{a, b} (\mathcal{V})$
with $\{\phi_t^{(i)}\} \leq \{\phi_t^{(j)}\}$
there is an induced homomorphism
\begin{equation}\label{equation: monotonicity homomorphism contact}
   \mu_i^j: 
G_{\mathbb{Z}_k, \ast}^{(a, b]} \big(\{\phi_t^{(j)}\}\big)
\rightarrow 
G_{\mathbb{Z}_k, \ast}^{(a, b]} \big(\{\phi_t^{(i)}\}\big) \,. 
\end{equation}
Moreover,
the homomorphisms $\mu_i^j$ satisfy the cocycle conditions
\[
\begin{cases}
\mu_i^{i} = \id \\
\mu_i^{j} \circ \mu_j^l = \mu_i^l
\quad \text{ for }
\{\phi_t^{(i)}\} \leq \{ \phi_t^{(j)} \}
\leq \{ \phi_t^{(l)} \} \,,
\end{cases}
\]
making
$\big\{ G_{\mathbb{Z}_k,*}^{(a,b]} \big( \{\phi_t\} \big)
\big\}_{ \{\phi_t\} \in \mathcal{H}^k_{a, b} (\mathcal{V}) }$
an inversely directed family
of graded modules over $\mathbb{Z}_k$.
We define
\[
G_{\mathbb{Z}_k,*}^{(a,b]} (\mathcal{V}) =
\underset{}{\varprojlim} \;
\Big\{ G_{\mathbb{Z}_k,*}^{(a,b]} \big( \{\phi_t\} \big)
\Big\}_{ \{\phi_t\} \in \mathcal{H}^k_{a, b} (\mathcal{V}) } \,.
\]

As in the symplectic case,
using now
\autoref{proposition: invariance by conjugation contact bis},
we obtain the following invariance result.

\begin{prop}
Let $a \leq b$ in $k\mathbb{Z} \cup \{\pm \infty\}$.
For every domain $\mathcal{V}$ of $(\mathbb{R}^{2n} \times S^1, \xi_0)$
and every compactly supported contact isotopy
$\{\psi_t\}_{t \in [0, 1]}$
the isomorphisms
$\lambda_{\{\psi_t\}}:
G_{\mathbb{Z}_k, \ast}^{(a, b]} \big(\{\phi_t\}\big)
\rightarrow 
G_{\mathbb{Z}_k, \ast}^{(a, b]}
\big( \{ \psi_1 \circ \phi_t \circ \psi_1^{-1} \} \big)$
for $\{\phi_t\} \in \mathcal{H}_{a, b}^k (\mathcal{V})$
induce a well-defined isomorphism
\[
\lambda_{\{\psi_t\}}:
G_{\mathbb{Z}_k,*}^{(a,b]} (\mathcal{V})
\rightarrow G_{\mathbb{Z}_k,*}^{(a,b]} \big(\psi_1(\mathcal{V})\big) \,.
\]
\end{prop}

The groups $G_{\mathbb{Z}_k,*}^{(a,b]} (\mathcal{V})$
satisfy the same functorial properties
as in the symplectic case.
If $\mathcal{V}_1 \subset \mathcal{V}_2$,
the inclusion of posets $\mathcal{H}^k_{a, b} (\mathcal{V}_1)\subset \mathcal{H}^k_{a, b} (\mathcal{V}_2)$
induces a homomorphism
\[
G_{\mathbb{Z}_k,*}^{(a,b]} (\mathcal{V}_2) \rightarrow G_{\mathbb{Z}_k,*}^{(a,b]} (\mathcal{V}_1) \,,
\]
and for
$\mathcal{V}_1 \subset \mathcal{V}_2 \subset \mathcal{V}_3$
and for any compactly supported contact isotopy
$\{\psi_t\}_{t \in [0, 1]}$
these homomorphisms fit into commutative diagrams
\[
\begin{tikzcd}
G_{\mathbb{Z}_k, \ast}^{(a, b]} (\mathcal{V}_3) \arrow{r} \arrow {rd} & G_{\mathbb{Z}_k, \ast}^{(a, b]} (\mathcal{V}_2) \arrow{d} \\
& G_{\mathbb{Z}_k, \ast}^{(a, b]} (\mathcal{V}_1)
\end{tikzcd}
\]
and, for $a \leq b$ in $k\mathbb{Z} \cup \{\pm \infty\}$, 
\[
\begin{tikzcd}
G_{\mathbb{Z}_k, \ast}^{(a, b]} (\mathcal{V}_2)
\arrow{r} \arrow{d}{\lambda_{\{\psi_t\}}}
& G_{\mathbb{Z}_k, \ast}^{(a, b]} (\mathcal{V}_1)
\arrow{d}{\lambda_{\{\psi_t\}}} \\
G_{\mathbb{Z}_k, \ast}^{(a, b]} \big(\psi_1(\mathcal{V}_2)\big)
\arrow{r}
& G_{\mathbb{Z}_k, \ast}^{(a, b]} \big(\psi_1(\mathcal{V}_1)\big) \,. 
\end{tikzcd}
\]


\section{Relation between the \texorpdfstring{$\mathbb{Z}_k$}{}-equivariant homology
of a domain of \texorpdfstring{$(\mathbb{R}^{2n}, \omega_0)$}{}
and the \texorpdfstring{$\mathbb{Z}_k$}{}-equivariant homology of its prequantization}
\label{section: relation symplectic contact}

We start by describing the relation
between the $\mathbb{Z}_k$-equivariant homology
of a compactly supported Hamiltonian isotopy
$\{\varphi_t\}_{t \in [0, 1]}$ of $(\mathbb{R}^{2n}, \omega_0)$
and that of its lift $\{\widetilde{\varphi_t}\}_{t \in [0, 1]}$
to $(\mathbb{R}^{2n} \times S^1, \xi_0)$.
Let  $f_t: \mathbb{R}^{2n} \times \mathbb{R}^N
\rightarrow \mathbb{R}$
be a 1-parameter family
of special generating function quadratic at infinity
for $\{\varphi_t\}$.
Then, by \autoref{proposition: gf lift to contact},
\begin{equation}\label{equation: f to F}
F_t: \mathbb{R}^{2n+1} \times \mathbb{R}^N
\rightarrow \mathbb{R} \,,\;
F_t (x, y, \theta, \zeta)
= f _t (x, y, \zeta)
\end{equation}
is a 1-parameter family
of special generating function quadratic at infinity
for $\{\widetilde{\varphi_t}\}$.
By Proposition \ref{proposition: composition formula}
and \autoref{proposition: critical points contact},
the critical points of $(f_1)^{\sharp k}$ and $\mathcal{P}_{F_1}^{(k)}$
are in 1--1 correspondence
with the $k$-periodic points of $\varphi_1$
and the translated $k$-chains of $\widetilde{\varphi_1}$ respectively,
with critical values given by the symplectic and contact actions.
Denoting by $S$ the compactly supported function
that satisfies $(\varphi_1)^{\ast} \lambda_0 - \lambda_0 = dS$,
$\big( (x_1, y_1, \theta_1), \dots, (x_k, y_k, \theta_k) \big)$
is a translated $k$-chain of $\widetilde{\varphi_1}$ of action $tk$
if and only if $(x_{j+1}, y_{j+1}) = \varphi_1 (x_j, y_j)$
and $\theta_{j+1} = \theta_j + S (x_j, y_j) - t$ for all $j$,
with the usual cyclic convention on the indices.
In this case,
$(x_1, y_1)$ is a fixed point of $(\varphi_1)^k$
and its action $\sum_{j=1}^k S (x_j, y_j)$
coincides with the action of the translated $k$-chain
$\big( (x_1, y_1, \theta_1), \dots, (x_k, y_k, \theta_k) \big)$
of $\widetilde{\varphi_1}$.
We conclude that  $(f_1)^{\sharp k}$ and $\mathcal{P}_{F_1}^{(k)}$,
hence $\overline{(f_1)^{\sharp k}}$ and $\overline{\mathcal{P}_{F_1}^{(k)}}$,
have the same critical values.
Moreover,
we have the following result.

\begin{prop}\label{proposition: equivariant reduction 1}
For every $a, b \in \mathbb{R}$
that are regular values of $\overline{(f_1)^{\sharp k}}$
and $\overline{\mathcal{P}_{F_1}^{(k)}}$
we have
\[
H_{\mathbb{Z}_k, \ast} \big( \big\{ \, \overline{\mathcal{P}_{F_1}^{(k)}} \leq b \,\big\} \,,\,
\big\{\, \overline{\mathcal{P}_{F_1}^{(k)}} \leq a \,\big\} \big)
\cong H_{\mathbb{Z}_k, \ast} \big(\, \{ \, \overline{(f_1)^{\sharp k}} \leq b \,\} \times S^1 \,,\,
\{\, \overline{(f_1)^{\sharp k}} \leq a \,\} \times S^1 \,\big) \,,
\]
where the $\mathbb{Z}_k$-action on the first pair
is generated by $\overline{\underline{\sigma}_k}$
and the $\mathbb{Z}_k$-action on the second pair
is generated by $(\overline{\sigma_k}, \id_{S^1})$.
\end{prop}

\begin{proof}
Consider the 1-parameter family of functions $G_s$,
for $s \in [0, 1]$,
defined by
\begin{gather*}
G_s (p, \theta, x_2, y_2, \theta_2, r_2, \dots ,
x_k, y_k, \theta_k, r_k, \zeta_1, \dots , \zeta_k) \\
= \overline{\mathcal{P}_{F_1}^{(k)}}
(p, \theta, x_2, y_2, \theta_2, s r_2, \dots ,
x_k, y_k, \theta_k, s r_k, \zeta_1, \dots , \zeta_k) \,.
\end{gather*}
Then $G_1 = \overline{\mathcal{P}_{F_1}^{(k)}}$ and
\[
G_0 (p, \theta, x_2, y_2, \theta_2, r_2, \dots , x_k, y_k, \theta_k, r_k, \zeta_1, \dots , \zeta_k)
= \overline{f_1^{\sharp k}} \, (p, x_2, y_2, \dots , x_k, y_k, \zeta_1, \dots, \zeta_k) \,.
\]
All the functions $G_s$ have the same critical values.
Since, by hypothesis, $a$ and $b$ are regular values of $G_1$,
they are thus regular values of $G_s$ for all $s$.
By \autoref{proposition: extend to sphere contact},
the critical points of $G_1 = \overline{\mathcal{P}_{F_1}^{(k)}}$
are contained in a compact set.
Moreover,
since $F_1$ does not depend on $\theta$,
all the critical points
of $G_1$ satisfy $r_2 = \dots = r_k = 0$.
Since $(p, \theta, x_2, y_2, \theta_2, r_2, \dots , x_k, y_k, \theta_k, r_k, \zeta_1, \dots , \zeta_k)$
is a critical point of $G_s$
if and only if $(p, \theta, x_2, y_2, \theta_2, s r_2, \dots ,
x_k, y_k, \theta_k, s r_k, \zeta_1, \dots , \zeta_k)$
is a critical point of $G_1$,
we deduce that all the critical points of $G_s$ for $s \in (0, 1]$
satisfy $r_2 = \dots = r_k = 0$
and are thus contained in a compact set $K$ independent of $s$.
On the other hand,
since $G_0$ does not depend on $r_2, \dots, r_k$ and $\theta_2, \dots, \theta_k$,
its critical points are not contained in a compact set.
We therefore deform the family $G_s$ as follows.
Let $\underline{K}$ be a compact set that strictly contains $K$,
and let $f_s$ be a 1-parameter family of functions such that, for every $s$,
$f_s \equiv 0$ on $\underline{K}$
and $f_s \equiv \epsilon_s$ outside a neighborhood $\mathcal{K}$ of $\underline{K}$,
where $\epsilon_s$ is a 1-parameter family
of non-negative small enough real numbers
with $\epsilon_0 > 0$ and $\epsilon_s = 0$ for $s$ in a neighborhood of $1$.
Consider the 1-parameter family of functions $\underline{G}_s$
defined by
\[
\underline{G}_s (e) = G_s (e) + f_s (e) (r_2 + \dots + r_k + \theta_2 + \dots + \theta_k)
\]
for $e = (p, \theta, x_2, y_2, \theta_2, r_2, \dots , x_k, y_k, \theta_k, r_k, \zeta_1, \dots , \zeta_k)$.
For every $s \in (0, 1]$,
$\underline{G}_s$ has the same critical points and critical values as $G_s$.
Indeed, the two functions coincide on $\underline{K}$
and an argument similar to the one that shows
that all the critical points of $G_s$ are contained in $K$
also shows that all the critical points of $\underline{G}_s$
are contained in $\underline{K}$
if the numbers $\epsilon_s$ have been chosen small enough.
On the other hand,
now $\underline{G}_0$ as well
does not have any critical point outside $\underline{K}$.
Since $G_0$ does not depend on $r_2, \dots, r_k$ and $\theta_2, \dots, \theta_k$,
$( \{ \underline{G}_0 \leq b \} , \{ \underline{G}_0 \leq a \} )$
is homotopy equivalent to $( \{ G_0 \leq b \} ,\{ G_0 \leq a \} )$,
hence to $\big(\, \{ \, \overline{(f_1)^{\sharp k}} \leq b \,\} \times S^1 \,,\,
\{\, \overline{(f_1)^{\sharp k}} \leq a \,\} \times S^1 \,\big)$,
by a $\mathbb{Z}_k$-equivariant homotopy equivalence.
Since $\underline{G}_1 = G_1 = \overline{\mathcal{P}_{F_1}^{(k)}}$,
$( \{ \underline{G}_1 \leq b \} ,\{ \underline{G}_1 \leq a \} )
= ( \{ \overline{\mathcal{P}_{F_1}^{(k)}} \leq b \} ,\{ \overline{\mathcal{P}_{F_1}^{(k)}} \leq a \} )$.
It thus remains to show that
\begin{equation}\label{equation: in proof relation symplectic contact}
H_{\mathbb{Z}_k, \ast} ( \{ \underline{G}_0 \leq b \} , \{ \underline{G}_0 \leq a \} )
\cong H_{\mathbb{Z}_k, \ast} ( \{ \underline{G}_1 \leq b \} , \{ \underline{G}_1 \leq a \} ) \,.
\end{equation}
This can be seen as follows.
For every $s$,
we have seen that there are no critical points
of $\underline{G}_s$ outside $\underline{K}$.
Moreover,
by the same arguments as in the proof of \autoref{proposition: extend to sphere contact}
we see that the norm of the gradient of $\underline{G}_s$
is bounded away from zero outside $\underline{K}$.
Thus
\begin{equation}\label{equation: in proof relation symplectic contact 2}
H_{\mathbb{Z}_k, \ast} ( \{ \underline{G}_s \leq b \} ,\{ \underline{G}_s \leq a \} )
\cong H_{\mathbb{Z}_k, \ast} ( \{ \underline{G}_s \leq a + \rho (b - a) \},
\{ \underline{G}_s \leq a \} ) \,,
\end{equation}
where $\rho$ is a cut-off function
supported in the neighborhood $\mathcal{K}$ of $\underline{K}$
and equal to $1$ on $\underline{K}$.
By an argument similar to the one in the proof of \autoref{lemma: continuation},
there is a $\mathbb{Z}_k$-equivariant isotopy $\{\theta_s\}$
supported in $\mathcal{K}' \supset \mathcal{K}$
such that $\theta_1$ sends
$( \{ \underline{G}_0 \leq a + \rho (b - a) \} \cap \mathcal{K}
\,,\,\{ \underline{G}_0 \leq a \} \cap \mathcal{K} )$
to $( \{ \underline{G}_1 \leq a + \rho (b - a) \} \cap \mathcal{K}
\,,\,\{ \underline{G}_1 \leq a \} \cap \mathcal{K} )$.
We thus have
\begin{align*}
H_{\mathbb{Z}_k, \ast} ( \{ \underline{G}_0 \leq a + \rho (b - a) \} \,,\,\{ \underline{G}_0 \leq a \} )
&\cong H_{\mathbb{Z}_k, \ast} \big( \theta_1 (\{ \underline{G}_0 \leq a + \rho (b - a) \})
\,,\, \theta_1 (\{ \underline{G}_0 \leq a \} ) \big)  \\
&\cong H_{\mathbb{Z}_k, \ast} ( \{ \underline{G}_1 \leq a + \rho (b - a) \} \cap \mathcal{K}
\,,\,\{ \underline{G}_1 \leq a \} \cap \mathcal{K} ) \\
&\cong H_{\mathbb{Z}_k, \ast} ( \{ \underline{G}_1 \leq a + \rho (b - a) \}
\,,\,\{ \underline{G}_1 \leq a \} )  \,,
\end{align*}
where the last two isomorphisms follow from excision.
Together with \eqref{equation: in proof relation symplectic contact 2},
this gives \eqref{equation: in proof relation symplectic contact}.
\end{proof}

By \autoref{proposition: equivariant reduction 1}
and the  K\"unneth formula
we have
\begin{align*}
G_{\mathbb{Z}_k, \ast}^{(a, b]} (F_t)
&\cong H_{\mathbb{Z}_k, \ast + k \ind ((f_1)_{\infty}) + n(k-1)}
\big(\{ \overline{(f_1)^{\sharp k}} \leq b\} \times S^1 \,,\,
\{  \overline{(f_1)^{\sharp k}} \leq a\} \times S^1 \big) \\
&= H_{\ast + k \ind ((f_1)_{\infty}) + n(k-1)}
\big( (\{  \overline{(f_1)^{\sharp k}} \leq b \} \times S^1 \times E\mathbb{Z}_k) / \mathbb{Z}_k \,,
(\{  \overline{(f_1)^{\sharp k}} \leq a \} \times S^1 \times E\mathbb{Z}_k) / \mathbb{Z}_k \big) \\
&\cong H_{\ast + k \ind ((f_1)_{\infty}) + n(k-1)}
\big( (\{  \overline{(f_1)^{\sharp k}} \leq b \} \times E\mathbb{Z}_k) / \mathbb{Z}_k \times S^1 \,,\,
(\{  \overline{(f_1)^{\sharp k}} \leq a \} \times E\mathbb{Z}_k) / \mathbb{Z}_k \times S^1 \big) \\
&\cong H_{\mathbb{Z}_k, \ast + k \ind ((f_1)_{\infty}) + n(k-1)}
(\{  \overline{(f_1)^{\sharp k}} \leq b\} ,\{  \overline{(f_1)^{\sharp k}} \leq a\}) \otimes H_*(S^1) \\
&= G_{\mathbb{Z}_k, \ast}^{(a, b]} (f_t) \otimes H_{\ast} (S^1) \,.
\end{align*}

We define a functor
\[
\mathcal{F} \big( \{\varphi_t\}\big) \rightarrow
\mathcal{F} \big(\{\widetilde{\varphi_t}\}\big)
\]
by mapping an object $f_t$ of $\mathcal{F} \big( \{\varphi_t\}\big)$
to the object $F_t$ of $\mathcal{F} \big(\{\widetilde{\varphi_t}\}\big)$
defined by \eqref{equation: f to F}
and a morphism $(Q_0, Q_1, \overline{\Phi})$
to the morphism $\big(Q_0, Q_1, (\overline{\Phi}, \id_{S^1}) \big)$.
The diagram
\[\begin{tikzcd}
\mathcal{F} \big( \{\varphi_t\}\big) \arrow[r] \arrow[d, "G_{\mathbb{Z}_k, \ast}^{(a, b]}"]
& \mathcal{F} \big(\{\widetilde{\varphi_t}\}\big) \arrow[d, "G_{\mathbb{Z}_k, \ast}^{(a, b]}"] \\
\operatorname{GrMod}_{\mathbb{Z}_k} \; \arrow[r, "\otimes H_*(S^1)"]
& \operatorname{GrMod}_{\mathbb{Z}_k}
\end{tikzcd}\]
is commutative,
and so we have an isomorphism
\begin{equation}\label{equation: symplectic/contact isomorphism.}
    G_{\mathbb{Z}_k, \ast}^{(a, b]} (\{\widetilde{\varphi_t}\})
\cong G_{\mathbb{Z}_k, \ast}^{(a, b]} (\{\varphi_t\})
\otimes H_{\ast} (S^1).
\end{equation}

Consider now a domain $\mathcal{U}$ of $(\mathbb{R}^{2n}, \omega_0)$,
and its prequantization $\widehat{\mathcal{U}} = \mathcal{U} \times S^1$
in $(\mathbb{R}^{2n} \times S^1, \xi_0)$.
If $\{\varphi_t\} \in \mathcal{H}^k_{a, b} (\mathcal{U})$
then $\{\widetilde{\varphi_t}\} \in \mathcal{H}^k_{a, b} (\widehat{\mathcal{U}})$,
and
\[
\mathcal{H}^k_{a, b} (\mathcal{U}) \rightarrow \mathcal{H}^k_{a, b} ( \widehat{\mathcal{U}})
\,,\; \{\varphi_t\} \mapsto \{\widetilde{\varphi_t}\}
\]
is a monotone map between posets.
The monotonicity maps
for $\{{\varphi_t^{(0)}}\}\leq \{{\varphi_t^{(1)}}\}$
defined in \eqref{equation: monotonicity homomorphism symplectic}
and those for $\{\widetilde{\varphi_t^{(0)}}\}\leq \{\widetilde{\varphi_t^{(1)}}\}$
defined in \eqref{equation: monotonicity homomorphism contact}
commute with the isomorphism \eqref{equation: symplectic/contact isomorphism.},
and so we have an isomorphism
\[
G_{\mathbb{Z}_k, \ast}^{(a, b]} (\widehat{\mathcal{U}}) \cong 
G_{\mathbb{Z}_k, \ast}^{(a, b]} (\mathcal{U})
\otimes H_{\ast} (S^1) \,.
\]
Moreover,
for every inclusion $\mathcal{U}_1 \hookrightarrow \mathcal{U}_2$
we have a commutative diagram
\[
\begin{tikzcd}
{G_{\mathbb{Z}_k, \ast}^{(a, b]} (\widehat{\mathcal{U}_2})} \arrow[r, "\cong"] \arrow[d]
&
{G_{\mathbb{Z}_k, \ast}^{(a, b]}(\mathcal{U}_2)
\otimes H_{\ast} (S^1)}
\arrow[d] \\
{G_{\mathbb{Z}_k, \ast}^{(a, b]} (\widehat{\mathcal{U}_1})}
\arrow[r, "\cong"]
&
{G_{\mathbb{Z}_k, \ast}^{(a, b]} (\mathcal{U}_1)
\otimes H_{\ast} (S^1)} \,.       
\end{tikzcd}
\]


\section{Calculations for balls}\label{section: balls}

In this section we calculate 
the equivariant homology
$G_{\mathbb{Z}_k, 2nl}^{(a, \infty]} \big(B^{2n}(R)\big)$
for $a > 0$ and $0< l < k$ with $k$ prime.
Our arguments are based
on the calculations in the non-equivariant case
done in \cite{Traynor},
and are similar to those
in \cite{S - equivariant}, \cite{Milin}, \cite{Fraser},
\cite{Chiu}, \cite{Zhang}.

Consider the Hamiltonian function
\[
H (z_1, \dots , z_n)
= \sum_{j=1}^n \frac{\lvert z_j \rvert^2}{R^2} \,,
\]
and its flow
\[
\varphi_t (z_1, \dots , z_n)
= ( e^{2 i t /R^2} z_1, \dots , e^{2 i t /R^2} z_n ) \,,
\]
which is periodic of period $\pi R^2$.
Let $\rho: \mathbb{R}_{\geq 0} \rightarrow \mathbb{R}$
be a function supported in $[0, 1]$
such that $\rho'' \geq 0$,
$\rho''(m) > 0$ for $m$ with
$\rho'(m) \in - \mathbb{N} \cdot \pi R^2$,
and $\left. \rho' \right\lvert_{[0, \delta]} \equiv c$
for some $c < 0$ and $\delta > 0$.
The flow $\{\varphi_t^{\rho}\}$ of $H_{\rho} := \rho \circ H$
is supported in $B^{2n}(R)$,
and is given by
\[
\varphi_t^{\rho} (z) =
\varphi_{ t \rho' (H(z))} (z) \,.
\]
Consider now a sequence
$\rho_j: \mathbb{R}_{\geq 0} \rightarrow \mathbb{R}$
of such functions
with $\lim_{j \to \infty} \rho_j(0) = \infty$,
$\lim_{j \to \infty} \rho_j'(0) = - \infty$
and so that,
denoting by $\{\varphi_t^{\rho_j}\}$
the flows of the Hamiltonian functions $H_{\rho_j}$,
the sequence
\[
\{\varphi_t^{\rho_1}\} \leq \{\varphi_t^{\rho_2}\}
\leq \{\varphi_t^{\rho_3}\} \leq \cdots
\]
is cofinal in $\mathcal{H}_{a, \infty}^k \big(B^{2n} (R)\big)$.
For each $j$,
let $F_t^{(j)}: \mathbb{R}^{2n} \times \mathbb{R}^{N_j}
\rightarrow \mathbb{R}$
be a 1-parameter family
of special generating functions quadratic at infinity
for $\{\varphi_t^{\rho_j}\}$,
and consider the associated 1-parameter family
of generating functions $(F_t^{(j)})^{\sharp k}$
for $\{(\varphi_t^{\rho_j})^k\}$.

The critical points of $(F_1^{(j)})^{\sharp k}$
are in 1--1 correspondence
with the fixed points of $(\varphi_1^{\rho_j})^{k}$.
A point $z$ is a fixed point of $(\varphi_1^{\rho_j})^{k}$
if and only if
$\rho_j' \big(H(z)\big) = - \frac{l}{k} \pi R^2$
for some integer $l \geq 0$.
The space of fixed points of $(\varphi_1^{\rho_j})^{k}$
is thus the union of the spaces
$Z_{j, \infty} = \{0\}$,
\[
Z_{j, 0} = \{\, z \in \mathbb{R}^{2n} \;\lvert\;
\rho_j' \big(H(z)\big) = 0 \,\} \,,
\]
and
\[
Z_{j, l} = \big\{\, z \in \mathbb{R}^{2n} \;\big\lvert\;
\rho_j' \big(H(z)\big) = - \frac{l}{k} \pi R^2 \,\big\}
= \{\, z \in \mathbb{R}^{2n} \;\lvert\;
H(z) = m_{j, l} \,\}
\]
for $l \in \mathbb{Z}_{>0}$
with $\frac{l}{k} < - \rho_j'(0)$,
where $m_{j, l}$ is the point of $[0, 1]$
such that $\rho_j' (m_{j, l}) = - \frac{l}{k} \pi R^2$.
Let $X_{j, \infty}$, $X_{j, 0}$ and $X_{j, l}$
be the spaces of critical points of $\overline{(F_1^{(j)})^{\sharp k}}$
corresponding to $Z_{j, \infty}$,
$Z_{j, 0}$ and $Z_{j, l}$.
Since $\{ ( \varphi_t^{\rho_j} )^k \} = \{ \varphi_t^{k\rho_j} \}$,
we know from \cite[7.4]{Traynor}
that the fixed point $Z_{j, \infty}$ has Maslov index $2n (l_j + 1)$,
where $l_j$ denotes the maximal $l \in \mathbb{Z}_{>0}$
with $\frac{l}{k} < - \rho_j'(0)$,
the fixed points in $Z_{j, l}$ have Maslov index $2nl$,
and moreover that $X_{j, \infty}$
is a non-degenerate critical point of critical value
$k \rho_j(0) \xrightarrow{j \to \infty} \infty$
and each $X_{j, l}$ is a non-degenerate critical submanifold,
diffeomorphic to $S^{2n-1}$,
with critical value
\[
c_{j, l}
= l \, m_{j, l} \, \pi R^2  + k \, \rho_j (m_{j, l})
\xrightarrow{j \to \infty} l \pi R^2 \,.
\]
By \autoref{proposition: index},
$X_{j, \infty}$ has thus Morse index
$2n (l_j + 1) + k\iota_j + n(k - 1)$,
where $\iota_j$ denotes the index of $(F_1^{(j)})_{\infty}$,
and $X_{j, l}$ has Morse--Bott index $2nl + k\iota_j + n(k - 1)$.
Finally, $X_{j, 0}$ is a space of critical points,
diffeomorphic to a disk,
of critical value $c_{j, 0} = 0$,
corresponding to fixed points of Maslov index $0$.

Recall also from \autoref{section: invariant generating functions symplectic}
that the $\mathbb{Z}_k$-action
on the critical submanifolds of $(F_1^{(j)})^{\sharp k}$
corresponds to the $\mathbb{Z}_k$-action
on the space of fixed points of $(\varphi_1^{\rho_j})^k$
generated by the map that sends a fixed point $p$
to $\varphi_1^{\rho_j}(p)$.
If $k$ is prime and $l < k$
the $\mathbb{Z}_k$-action
on $X_{j, l} \cong S^{2n-1}$ is thus free,
and so the quotient is diffeomorphic
to the lens space $L_k^{2n-1}$.

To calculate
\[
G_{\mathbb{Z}_k, 2nl}^{(a, \infty]} (F_t^{(j)})
= H_{\mathbb{Z}_k, 2nl + k\iota_j + n(k-1)}
\big( \{ \overline{(F_1^{(j)})^{\sharp k}} \leq \infty \} \,,\,
\{ \overline{(F_1^{(j)})^{\sharp k}} \leq a \} \,;\, \mathbb{Z}_k \big)
\]
for $0 < l < k$
we then use the following facts:
\begin{itemize}
\item[(i)]
For $b_1 < b_2$ we have the long exact sequence
\[
\dots \longrightarrow G_{\mathbb{Z}_k, \ast}^{(b_1, b_2]}
(F_t^{(j)})
\longrightarrow G_{\mathbb{Z}_k, \ast}^{(b_1, \infty]}
(F_t^{(j)})
\longrightarrow G_{\mathbb{Z}_k, \ast}^{(b_2, \infty]}
(F_t^{(j)})
\longrightarrow G_{\mathbb{Z}_k, \ast - 1}^{(b_1, b_2]}
(F_t^{(j)})
\longrightarrow \cdots
\]
of the triple
$\{ \overline{(F_1^{(j)})^{\sharp k}} \leq b_1 \}
\subset \{ \overline{(F_1^{(j)})^{\sharp k}} \leq b_2 \}
\subset \{ \overline{(F_1^{(j)})^{\sharp k}} \leq \infty \}$.

\item[(ii)]
If $(b_1, b_2]$ does not contain
critical values of $\overline{(F_1^{(j)})^{\sharp k}}$
then
$G_{\mathbb{Z}_k, \ast}^{(b_1, b_2]} (F_t^{(j)}) \cong 0$
for all $\ast$.
If $(b_1, b_2]$ contains only one critical value $c$,
corresponding to a critical submanifold $X_c$ of index $i(c)$,
then $\big( \{ \overline{(F_1^{(j)})^{\sharp k}} \leq b_2 \} \,,\,
\{ \overline{(F_1^{(j)})^{\sharp k}} \leq b_1 \}\big)$
is homotopy equivalent to $\big( D(E), S(E) \big)$,
where $E$ is a $\mathbb{Z}_k$-invariant vector bundle over $X_c$
of rank $i(c)$,
and so by the equivariant Thom isomorphism
we have
\[
G_{\mathbb{Z}_k, \ast}^{(b_1, b_2]} (F_t^{(j)})
\cong
H_{\mathbb{Z}_k, \ast + k\iota_j + n(k-1) - i(c)} (X_c; \mathbb{Z}_k) \,.
\]
In particular,
\[
G_{\mathbb{Z}_k, \ast}^{(b_1, b_2]} (F_t^{(j)})\cong 0
\quad \text{ for all } \ast
< i(c) - k\iota_j - n(k - 1)\,,
\]
and if $c = c_{j, l}$ with $0 < l < k$
then
\[
G_{\mathbb{Z}_k, \ast}^{(b_1, b_2]} (F_t^{(j)})
\cong H_{\mathbb{Z}_k, \ast - 2nl} (S^{2n-1}; \mathbb{Z}_k)
\cong H_{\ast - 2nl} (L_k^{2n-1}; \mathbb{Z}_k)
\]
\[
\cong \begin{cases}
\mathbb{Z}_k &\text{if } \ast = 2nl, \dots, 2n(l+1) - 1 \\
0 &\text{otherwise.}
\end{cases}
\]
\item[(iii)] For every $\epsilon > 0$
such that $0$ is the only critical value
of $\overline{(F_1^{(j)})^{\sharp k}}$ in $[- \epsilon, \epsilon]$
we have
\[
G_{\mathbb{Z}_k, \ast}^{(- \epsilon, \epsilon]} (F_t^{(j)})
\cong H_{\mathbb{Z}_k, \ast} (\pt; \mathbb{Z}_k) \,.
\]
Indeed, since the space $X_{j, 0}$ of critical points of critical value $0$
is diffeomorphic to a disk $D$
and the corresponding fixed points have Maslov index $0$
(hence, by \autoref{proposition: index},
the index of the Hessian of $\overline{(F_1^{(j)})^{\sharp k}}$
at every point of $X_{j, 0}$ is $k \iota_j + n (k-1)$),
$\big( \{ \overline{(F_1^{(j)})^{\sharp k}} \leq \epsilon \} \,,\,
\{ \overline{(F_1^{(j)})^{\sharp k}} \leq - \epsilon \}\big)$
is homotopy equivalent to $\big( D(E), S(E) \big)$,
where $E$ is a $\mathbb{Z}_k$-invariant vector bundle over $D$ 
of rank $k \iota_j + n (k-1)$.
The claim then follows from the equivariant Thom isomorphism.
\item[(iv)] By the equivariant Thom isomorphism,
and since the $\mathbb{Z}_k$-action
on the base $S^{2n}$ of the vector bundle
$S^{2n} \times \mathbb{R}^{2n (k-1)} \times \mathbb{R}^{N_jk} \rightarrow S^{2n}$
(on the total space of which the function
$\overline{(F_1^{(j)})^{\sharp k}}$ is defined) is trivial,
we have
\[
G_{\mathbb{Z}_k, \ast}^{(- \infty, \infty]} (F_t^{(j)})
\cong H_{\mathbb{Z}_k, \ast} (S^{2n}; \mathbb{Z}_k)
\cong H_{\ast} (S^{2n}; \mathbb{Z}_k) \otimes H_{\mathbb{Z}_k, \ast} (\pt; \mathbb{Z}_k) \,.
\]
\end{itemize}

We can now prove the following result.

\begin{lemma}\label{proposition: calculation phi equivariant}
Let $a > 0$ and $0 < l < k$ with $k$ prime.
Then
\[
G_{\mathbb{Z}_k, 2nl}^{(a, \infty]} (F_t^{(j)})
\cong
\begin{cases}
\mathbb{Z}_k \quad &\text{if }
a < c_{j, l} \\
0 &\text{otherwise.}
\end{cases}
\]
Moreover,
for every $a, a'$ with $a < a' < c_{j, l}$
the map $G_{\mathbb{Z}_k, 2nl}^{(a, \infty]} (F_t^{(j)})
\rightarrow G_{\mathbb{Z}_k, 2nl}^{(a', \infty]} (F_t^{(j)})$
induced by the inclusion
is an isomorphism.
\end{lemma}

\begin{proof}
Using (ii) we see that for $a > c_{j, l}$
and $\ast < 2n (l+1)$
the long exact sequence in (i)
for $b_1 = a$ and $b_2 = b$ such that
$(a, b]$ contains only one critical value is
\[
0 \longrightarrow
G_{\mathbb{Z}_k, \ast}^{(a, \infty]}(F_t^{(j)})
\longrightarrow
G_{\mathbb{Z}_k, \ast}^{(b, \infty]}(F_t^{(j)})
\longrightarrow 0 \,,
\]
so $G_{\mathbb{Z}_k, \ast}^{(a, \infty]}(F_t^{(j)})
\cong G_{\mathbb{Z}_k, \ast}^{(b, \infty]}(F_t^{(j)})$.
By crossing one by one all the critical values
bigger than $a$
we thus obtain $G_{\mathbb{Z}_k, \ast} ^{(a, \infty]} (F_t^{(j)}) \cong 0$.

For $c_{j, l - 1} < a < c_{j, l}$
and $\ast = 2nl, \dots, 2n (l+1) - 2$
the long exact sequence in (i)
for $b_1 = a$ and $b_2 = b$ with $c_{j, l} < b < c_{j, l+1}$ is
\[
0 \longrightarrow
G_{\mathbb{Z}_k, \ast}^{(a, b]}(F_t^{(j)})
\longrightarrow
G_{\mathbb{Z}_k, \ast}^{(a, \infty]}(F_t^{(j)})
\longrightarrow 0 \,,
\]
and so by (ii) we have
$G_{\mathbb{Z}_k, \ast}^{(a, \infty]} (F_t^{(j)})
\cong G_{\mathbb{Z}_k, \ast}^{(a, b]} (F_t^{(j)})
\cong \mathbb{Z}_k$.
In particular,
$G_{\mathbb{Z}_k, 2nl}^{(a, b]} (F_t^{(j)}) \cong \mathbb{Z}_k$.
Moreover, 
for every $a, a'$ with $c_{j, l - 1} < a < a' < c_{j, l}$
the long exact sequence in (i) for $b_1 = a$ and $b_2 = a'$
shows that the map $G_{\mathbb{Z}_k, 2nl}^{(a, \infty]} (F_t^{(j)})
\rightarrow G_{\mathbb{Z}_k, 2nl}^{(a', \infty]} (F_t^{(j)})$
is an isomorphism.

Suppose now that $l > 1$,
and consider $a$ with $c_{j, l-2} < a < c_{j, l-1}$.
The long exact sequence in (i) for $b_1 = a$ and $b_2 = b$
with $c_{j, l-1} < b < c_{j, l}$
gives
\begin{equation}\label{equation: les}
0 \longrightarrow
G_{\mathbb{Z}_k, 2nl}^{(a, \infty]}(F_t^{(j)})
\longrightarrow
G_{\mathbb{Z}_k, 2nl}^{(b, \infty]}(F_t^{(j)})
\longrightarrow
G_{\mathbb{Z}_k, 2nl-1}^{(a, b]}(F_t^{(j)})
\longrightarrow
G_{\mathbb{Z}_k, 2nl-1}^{(a, \infty]}(F_t^{(j)})
\longrightarrow 0 \,.
\end{equation}
By (ii), $G_{\mathbb{Z}_k, 2nl-1}^{(a, b]}(F_t^{(j)}) \cong \mathbb{Z}_k$.
Thus either $G_{\mathbb{Z}_k, 2nl-1}^{(a, \infty]}(F_t^{(j)}) \cong \mathbb{Z}_k$
or $G_{\mathbb{Z}_k, 2nl-1}^{(a, \infty]}(F_t^{(j)}) \cong 0$.
Suppose that $G_{\mathbb{Z}_k, 2nl-1}^{(a, \infty]}(F_t^{(j)}) \cong 0$.
Then, since $G_{\mathbb{Z}_k, 2nl}^{(b, \infty]}(F_t^{(j)}) \cong \mathbb{Z}_k$,
the long exact sequence \eqref{equation: les} implies that
$G_{\mathbb{Z}_k, 2nl}^{(a, \infty]}(F_t^{(j)}) = 0$.
But then the long exact sequence in (i)
with $b_1 < 0$ and $b_2 = a$ gives
\[
0 \longrightarrow
G_{\mathbb{Z}_k, 2nl-1}^{(b_1, a]}(F_t^{(j)})
\longrightarrow
G_{\mathbb{Z}_k, 2nl-1}^{(b_1, \infty]}(F_t^{(j)})
\longrightarrow 0 \,.
\]
This is absurd.
Indeed, on the one hand
$G_{\mathbb{Z}_k, 2nl-1}^{(b_1, \infty]}(F_t^{(j)}) \cong \mathbb{Z}_k \oplus \mathbb{Z}_k$
by (ii) and (iv).
On the other hand,
crossing one by one the critical values between $a$ and $0$
and applying (ii) and the exact sequence of (i) at every step
we see that $G_{\mathbb{Z}_k, 2nl-1}^{(b_1, a]}(F_t^{(j)})
\cong G_{\mathbb{Z}_k, 2nl-1}^{(b_1, \epsilon]}(F_t^{(j)})$.
But, by (iii), $G_{\mathbb{Z}_k, 2nl-1}^{(b_1, \epsilon]}(F_t^{(j)}) \cong \mathbb{Z}_k$.
We conclude that $G_{\mathbb{Z}_k, 2nl-1}^{(a, \infty]}(F_t^{(j)}) \cong \mathbb{Z}_k$
and so, by the long exact sequence \eqref{equation: les},
$G_{\mathbb{Z}_k, 2nl}^{(a, \infty]}(F_t^{(j)}) \cong \mathbb{Z}_k$
and the map $G_{\mathbb{Z}_k, 2nl}^{(a, \infty]}(F_t^{(j)})
\rightarrow G_{\mathbb{Z}_k, 2nl}^{(b, \infty]}(F_t^{(j)})$
is an isomorphism.
Using again (i) and (ii) we then obtain
the same conclusions also for $0< a < c_{j, l-2}$.
\end{proof}

Using \autoref{proposition: calculation phi equivariant}
we obtain the following result.

\begin{prop}\label{proposition: calculation limit balls}
If $k$ is prime and $0 < l < k$
then for any $a > 0$
we have
\[
G_{\mathbb{Z}_k, 2nl}^{(a, \infty]} \big(B^{2n}(R)\big)
\cong
\begin{cases}
\mathbb{Z}_k \quad &\text{for }
a < l \pi R^2 \\
0 &\text{otherwise.}
\end{cases}
\]
\end{prop}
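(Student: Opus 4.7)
The plan is to reduce the inverse limit defining $G_{\mathbb{Z}_k, 2nl}^{(a, \infty]}\big(B^{2n}(R)\big)$ to one taken over the cofinal sequence $\{\{\varphi_t^{\rho_j}\}\}_j$ constructed earlier in this section, and then to combine \autoref{proposition: calculation phi equivariant} with an analysis of the structure maps $\lambda_i^j$. First I would verify cofinality: since the Hamiltonians $H_{\rho_j}$ tend to $+\infty$ uniformly on any compact subset of $B^{2n}(R)$, any compactly supported Hamiltonian isotopy supported in $B^{2n}(R)$ is eventually dominated by some $\{\varphi_t^{\rho_j}\}$ in the partial order on $\mathcal{H}^k_{a, \infty}\big(B^{2n}(R)\big)$. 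Thus it suffices to compute $\varprojlim_j G_{\mathbb{Z}_k, 2nl}^{(a, \infty]}\big(\{\varphi_t^{\rho_j}\}\big)$.

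For $a \geq l \pi R^2$, the inequality $c_{k,j,l} < l \pi R^2 \leq a$ together with \autoref{proposition: calculation phi equivariant} gives that every term in the cofinal sequence vanishes, and the inverse limit is $0$. For $0 < a < l \pi R^2$, the convergence $c_{k,j,l} \to l \pi R^2$ from below provides a threshold $j_0$ beyond which $a < c_{k,j,l}$, so each term in the cofinal tail is isomorphic to $\mathbb{Z}_k$. The conclusion then reduces to showing that the structure maps
\[
\lambda_i^j: G_{\mathbb{Z}_k, 2nl}^{(a, \infty]}\big(\{\varphi_t^{\rho_j}\}\big) \rightarrow G_{\mathbb{Z}_k, 2nl}^{(a, \infty]}\big(\{\varphi_t^{\rho_i}\}\big)
\]
between the $\mathbb{Z}_k$ terms are isomorphisms for $j_0 \leq i \leq j$, so that the inverse limit is again $\mathbb{Z}_k$.

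To establish this, I would connect $\{\varphi_t^{\rho_i}\}$ to $\{\varphi_t^{\rho_j}\}$ by a monotone family $\{\varphi_t^{\rho_s}\}_{s \in [0, 1]}$ of non-negative Hamiltonian isotopies, obtained from a convex interpolation between $\rho_i$ and $\rho_j$, and track the associated family of generating functions $(F_1^{(s)})^{\sharp k}$. The critical submanifolds carrying the Maslov index $2nl$ class vary continuously in $s$ with critical value $c_{k,s,l}$ depending continuously on $s$; if $\rho_s$ is arranged so that $c_{k,s,l}$ stays bounded below by $\min(c_{k,i,l}, c_{k,j,l}) > a$ throughout the interpolation, then no critical value of $(F_1^{(s)})^{\sharp k}$ crosses $a$, and a standard $\mathbb{Z}_k$-equivariant gradient-flow deformation yields a $\mathbb{Z}_k$-equivariant homotopy equivalence of the relevant sublevel set pairs. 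The induced isomorphism on equivariant relative homology will then coincide with $\lambda_i^j$.

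The main obstacle is precisely verifying that the critical value $c_{k,s,l}$ can be kept strictly above $a$ along the entire interpolation. If simple monotonicity in $s$ fails, I would instead argue indirectly: apply the five lemma to the long exact sequences of the triples $\{(F_1^{(s)})^{\sharp k} \leq a\} \subset \{(F_1^{(s)})^{\sharp k} \leq b\} \subset \{(F_1^{(s)})^{\sharp k} \leq \infty\}$, for an intermediate value $b \in (a, l\pi R^2)$ chosen so that only the critical submanifold of Maslov index $2nl$ contributes in $(a, b]$, and use the equivariant Thom isomorphism to identify the relative piece as $H_{\ast}(L_k^{2n-1}; \mathbb{Z}_k)$. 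Naturality of the Thom isomorphism under the continuation map will then force $\lambda_i^j$ to be the identity on this generating class, completing the computation.
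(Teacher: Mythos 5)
Your proposal follows essentially the same route as the paper, which itself only says ``use \autoref{proposition: calculation phi equivariant} as in Traynor's 7.5'': restrict the inverse limit to the cofinal sequence $\{\varphi_t^{\rho_j}\}$, read off each term from the lemma, and show the structure maps in the tail are isomorphisms. The overall structure is right, and your fallback argument (long exact sequence of the triple plus the equivariant Thom isomorphism at the index-$2nl$ critical submanifold) is exactly the mechanism the paper uses for the neighbouring proposition on the inclusion $B^{2n}(R_2)\hookrightarrow B^{2n}(R_1)$.

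One inference in your primary argument is unjustified: keeping $c_{k,s,l}$ above $a$ does \emph{not} imply that ``no critical value of $(F_1^{(s)})^{\sharp k}$ crosses $a$.'' The function has critical submanifolds $X_{k,s,m}$ for every $m$ with $\tfrac{m}{k}<-\rho_s'(0)$, and for $m$ with $m\pi R^2>a$ the value $c_{k,s,m}$ can start below $a$ early in the sequence and must end above it, so crossings do occur unless you first pass to a tail $j\geq j_0$ on which \emph{all} critical values have sorted themselves relative to $a$ (this is possible since $c_{k,j,m}\nearrow m\pi R^2$ and, for a monotone interpolation $\partial_s\rho_s\geq 0$, one checks $\tfrac{d}{ds}c_{k,s,m}=k\,\partial_s\rho_s(m_{k,s,m})\geq 0$, while newly born critical circles appear at value $k\rho_s(0)\gg a$). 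With that refinement your gradient-flow argument closes; without it you must fall back on the LES. In the fallback, note also that the relative group in degree $2nl-1$ over an interval containing an index-$2n(l-1)$ critical submanifold is $H_{2n-1}(L_k^{2n-1};\mathbb{Z}_k)\cong\mathbb{Z}_k\neq 0$, so you only get injectivity of $\lambda_i^j$ from the vanishing of the degree-$2nl$ relative group; conclude it is an isomorphism because both source and target are already known to be $\mathbb{Z}_k$ (a finite set of the same cardinality), or choose the tail so that no crossing occurs at all. With these two adjustments the proof is complete.
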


\begin{proof}
Since $(c_{j, l})_j$ is an increasing sequence
with $\lim_{j \to \infty} c_{j, l} = l \pi R^2$,
if $a \geq l \pi R^2$ then $a > c_{j, l}$ for all $j$.
It thus follows from \autoref{proposition: calculation phi equivariant}
that in this case we have
$G_{\mathbb{Z}_k, 2nl}^{(a, \infty]} \big(B^{2n}(R)\big) \cong 0$.
Suppose now that $a < l \pi R^2$.
Then for $i < j$ big enough we have
$a < c_{i, l} < c_{j, l} < l \pi R^2$.
Define $\rho^{(s)} = (1-s) \rho_i + s \rho_j$
and $H^{(s)} = \rho^{(s)} \circ H$,
and let $\{\varphi_t^{(s)}\}$ be the flow of $H^{(s)}$.
Since the family $s \mapsto H^{(s)}$ is increasing,
by \autoref{lemma: monotonicity special} there is a 2-parameter family $F_{s, t}$
of special generating functions quadratic at infinity for $\{\varphi_t^{(s)}\}$
such that $F_{s, 0}$ is a non-degenerate quadratic form for every $s$,
$(F_{s, t})_{\infty}$ does not depend on $s$ and $t$,
and $\frac{d}{ds} F_{s, t} \geq 0$.
In particular, $F_t^{(i)} := F_{0, t}$ and $F_t^{(j)} := F_{1, t}$
are 1-parameter families of special generating functions quadratic at infinity
for $\{ \varphi_t^{\rho_i} \}$ and $\{ \varphi_t^{\rho_j} \}$ respectively
with $F_1^{(i)} \leq F_1^{(j)}$,
and so $\overline{(F_1^{(i)})^{\sharp k}} \leq \overline{(F_1^{(j)})^{\sharp k}}$.
Recall that the monotonicity homomorphism
\[
\mu_i^j: G_{\mathbb{Z}_k, 2nl}^{(a, \infty]} ( \{\varphi_t^{\rho_j}\} )
\rightarrow G_{\mathbb{Z}_k, 2nl}^{(a, \infty]} ( \{\varphi_t^{\rho_i}\} )
\]
is given by $\mu_i^j = (i_{F_t^{(i)}})^{-1} \circ i_{\ast} \circ i_{F_t^{(j)}}$,
where $i_{\ast}: G_{\mathbb{Z}_k, 2nl}^{(a, \infty]}(F_t^{(j)})
\rightarrow G_{\mathbb{Z}_k, 2nl}^{(a, \infty]}(F_t^{(i)})$
is the homomorphism induced by the inclusion
of the sublevel sets of $\overline{(F_1^{(j)})^{\sharp k}}$
into those of $\overline{(F_1^{(i)})^{\sharp k}}$,
and $i_{F_t^{(i)}}: G_{\mathbb{Z}_k, 2nl}^{(a, \infty]} ( \{\varphi_t^{\rho_i}\} )
\rightarrow G_{\mathbb{Z}_k, 2nl}^{(a, \infty]}(F_t^{(i)})$
and $i_{F_t^{(j)}}: G_{\mathbb{Z}_k, 2nl}^{(a, \infty]} ( \{\varphi_t^{\rho_j}\} )
\rightarrow G_{\mathbb{Z}_k, 2nl}^{(a, \infty]}(F_t^{(j)})$
are the isomorphisms given by \autoref{proposition: commuting isomorphism}.
Our result thus follows if we prove that $i_{\ast}$ is an isomorphism.
Let $s \mapsto a_s$ for $s \in [0, 1]$
be an increasing path with $a_1 = a$
and such that, for every $s$,
$a_s$ is a regular value
of $\overline{( F_{s, 1} )^{\sharp k}}$.
Then $a_0 < a < c_{i, l}$.
Moreover,
since $(F_{s, 1})_{\infty}$ does not depend on $s$,
$\frac{d}{ds} F_{s, 1}$ is bounded
and so $\frac{d}{ds} \, \overline{(F_{s, 1})^{\sharp k}}$ is bounded.
We can thus apply \autoref{lemma: continuation}
to obtain a $\mathbb{Z}_k$-equivariant isotopy
mapping $\{ \overline{(F_{0, 1})^{\sharp k}} \leq a_0 \}$
to $\{ \overline{(F_{s, 1})^{\sharp k}} \leq a_s \}$.
In particular, we get an isomorphism
\[
G_{\mathbb{Z}_k, 2nl}^{(a, \infty]} (F_t^{(j)})
\xrightarrow{\cong}
G_{\mathbb{Z}_k, 2nl}^{(a_0, \infty]} (F_t^{(i)})
\]
fitting into the commutative diagram
\[
\begin{tikzcd}
G_{\mathbb{Z}_k, 2nl}^{(a, \infty]} (F_t^{(j)})
\arrow{r}{\cong} \arrow{rd}{i_{\ast}}
& G_{\mathbb{Z}_k, 2nl}^{(a_0, \infty]} (F_t^{(i)})
\arrow{d}\\
& G_{\mathbb{Z}_k, 2nl}^{(a, \infty]} (F_t^{(i)}) \,,
\end{tikzcd}
\]
where the vertical arrow is the map induced by the inclusion.
This map is an isomorphism by \autoref{proposition: calculation phi equivariant}.
We thus conclude that $i_{\ast}$ is an isomorphism.
\end{proof}

Finally we prove the last ingredient entering in the proof
of \autoref{theorem: main}.

\begin{prop}\label{proposition: inclusion balls}
If $k$ is prime, $0 < l < k$ and $0< a < l \pi R_2^2 < l \pi R_1^2$
then the homomorphism
\[
G_{\mathbb{Z}_k, 2nl}^{(a, \infty]} \big(B^{2n}(R_1)\big)
\rightarrow
G_{\mathbb{Z}_k, 2nl}^{(a, \infty]} \big(B^{2n}(R_2)\big)
\]
induced by the inclusion
of $B^{2n} (R_2)$ into $B^{2n} (R_1)$
is an isomorphism.
\end{prop}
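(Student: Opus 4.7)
The plan is to compare the two inverse systems via compatible cofinal sequences and reduce to an isomorphism between the generating function homologies of specific Hamiltonian isotopies. First I take the cofinal sequence $\{\varphi_t^{\rho_j^{(2)}}\}$ in $\mathcal{H}^k_{a,\infty}(B^{2n}(R_2))$ from the previous proposition, and construct a matching cofinal sequence $\{\varphi_t^{\rho_j^{(1)}}\}$ in $\mathcal{H}^k_{a,\infty}(B^{2n}(R_1))$ with radially symmetric Hamiltonians $H_{\rho_j^{(1)}} = \rho_j^{(1)} \circ H_{R_1} \ge \rho_j^{(2)} \circ H_{R_2} = H_{\rho_j^{(2)}}$ pointwise on $\mathbb{R}^{2n}$. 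Because both Hamiltonians are radial their flows commute, so $\varphi_t^{\rho_j^{(1)}} \circ (\varphi_t^{\rho_j^{(2)}})^{-1} = \varphi_t^{H_{\rho_j^{(1)}} - H_{\rho_j^{(2)}}}$ is non-negative, yielding $\{\varphi_t^{\rho_j^{(2)}}\} \le \{\varphi_t^{\rho_j^{(1)}}\}$ in $\mathcal{H}^k_{a,\infty}(B^{2n}(R_1))$. The resulting commutative diagram
\[
\begin{tikzcd}
G_{\mathbb{Z}_k, 2nl}^{(a,\infty]}(B^{2n}(R_1)) \arrow{r} \arrow{d}{\pi_1} & G_{\mathbb{Z}_k, 2nl}^{(a,\infty]}(B^{2n}(R_2)) \arrow{d}{\pi_2} \\
G_{\mathbb{Z}_k, 2nl}^{(a,\infty]}(\{\varphi_t^{\rho_j^{(1)}}\}) \arrow{r}{\lambda_j} & G_{\mathbb{Z}_k, 2nl}^{(a,\infty]}(\{\varphi_t^{\rho_j^{(2)}}\})
\end{tikzcd}
\]
has vertical projections $\pi_1, \pi_2$ that are isomorphisms $\mathbb{Z}_k \to \mathbb{Z}_k$ for $j$ large, by the computations underlying the previous proposition. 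It thus suffices to show the monotonicity map $\lambda_j$ is an isomorphism for $j$ large.

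Next I would analyze $\lambda_j$ by interpolation. Take a monotone 2-parameter family $F_t^{(s)}$, $s \in [0,1]$, of generating functions quadratic at infinity with $\partial_s F_t^{(s)} \ge 0$, interpolating between generating functions of $\{\varphi_t^{\rho_j^{(2)}}\}$ at $s=0$ and $\{\varphi_t^{\rho_j^{(1)}}\}$ at $s=1$, as provided by the construction of $\lambda_i^j$ in \autoref{section: equivariant symplectic homology}. The map $\lambda_j$ is induced by the inclusion $\{(F_1^{(1)})^{\sharp k} \le a\} \hookrightarrow \{(F_1^{(0)})^{\sharp k} \le a\}$. After a generic perturbation the critical submanifolds of each $(F_1^{(s)})^{\sharp k}$ are of the types studied in the previous proposition: spheres $X^{(s)}_{k,j,l'} \cong S^{2n-1}$ of Morse--Bott index $2nl' + k\iota_j + n(k-1)$ (by \autoref{proposition: index}) at critical values $c^{(s)}_{k,j,l'}$, together with the non-degenerate point at infinity $X^{(s)}_{k,j,\infty}$ and the zero-action set $X^{(s)}_{j,0}$.

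Since $l < k$ is prime, the $\mathbb{Z}_k$-action on $X^{(s)}_{k,j,l'}$ is free for $0 < l' < k$, so its equivariant contribution to $G_{\mathbb{Z}_k, \ast}^{(a,\infty]}$ is isomorphic to $H_\ast(L_k^{2n-1}; \mathbb{Z}_k) \cong \mathbb{Z}_k$ in each of degrees $0, \dots, 2n-1$, shifted by the Morse--Bott index. Consequently only $X^{(s)}_{k,j,l}$ contributes to degree $2nl + k\iota_j + n(k-1)$: the point at infinity has Morse index exceeding this for $j$ large, the set $X^{(s)}_{j,0}$ sits at critical value $0 < a$, and the $2n$-wide contribution window of $X^{(s)}_{k,j,l'}$ for $l' \ne l$ does not meet $2nl$. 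Moreover, monotonicity of $F_t^{(s)}$ gives $\tfrac{d}{ds} c^{(s)}_{k,j,l} \ge 0$ at the persisting critical point, and since $c^{(0)}_{k,j,l} = c^{(2)}_{k,j,l} > a$ for $j$ large, the critical value $c^{(s)}_{k,j,l}$ remains above $a$ throughout $[0,1]$. The equivariant Thom class of $X^{(s)}_{k,j,l}$, which generates $G_{\mathbb{Z}_k, 2nl}^{(a,\infty]}(\{\varphi_t^{\rho_j^{(s)}}\}) \cong \mathbb{Z}_k$, therefore persists across the interpolation and is preserved by the inclusion of sublevel sets, so that $\lambda_j$ is an isomorphism.

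The main obstacle is the Morse--Bott bookkeeping along the interpolation: a generic monotone family $F_t^{(s)}$ need not come from a radially symmetric Hamiltonian, so the critical submanifolds $X^{(s)}_{k,j,l'}$ can split into clusters of non-degenerate critical $\mathbb{Z}_k$-orbits under a perturbation. One must verify that the index--degree analysis remains robust under such bifurcations and that the equivariant Thom class associated to $X^{(s)}_{k,j,l}$ varies continuously as a well-defined class in the equivariant relative homology, so that $\lambda_j$ indeed maps the generator of the source onto the generator of the target.
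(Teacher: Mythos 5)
Your overall strategy coincides with the paper's: compare the two inverse systems through matched radial cofinal sequences (the paper uses the \emph{same} cut-offs $\rho_j$ for both radii, so that $H^{(1)}_{\rho_j}\geq H^{(2)}_{\rho_j}$ holds automatically and the flows commute), reduce to showing that the monotonicity map between $G_{\mathbb{Z}_k,2nl}^{(a,\infty]}(\{\varphi_t^{\rho_j,1}\})$ and $G_{\mathbb{Z}_k,2nl}^{(a,\infty]}(\{\varphi_t^{\rho_j,2}\})$ is an isomorphism, and analyze that map through a monotone interpolation. Up to that point the two arguments agree. Note also that your stated worry about a ``generic perturbation'' destroying the Morse--Bott structure is avoidable: the paper interpolates by moving the radius linearly, $R_s=(2-s)R_1+(s-1)R_2$, so every intermediate time-$1$ map is again radial and the critical submanifolds remain spheres; your own commuting-flows observation gives the same for the linear interpolation of the Hamiltonians.

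The genuine gap is the one you flag at the end. Knowing that the index-$2nl$ critical submanifold keeps its critical value above $a$ throughout the interpolation, and that the other critical submanifolds contribute only in degrees $\neq 2nl$, does not yet show that the inclusion of sublevel sets at the \emph{fixed} level $a$ sends generator to generator: the two generators sit near different critical levels ($\approx l\pi R_1^2$ versus $\approx l\pi R_2^2$), and critical submanifolds of index $2nm$ with $m<l$ do sweep across the level $a$ as $s$ varies, changing the topology of $\{(F_1^{(j,s)})^{\sharp k}\leq a\}$. The paper closes this with two steps your sketch is missing: (i) it chooses a decreasing family $a_s$ of regular values of $(F_1^{(j,s)})^{\sharp k}$ and follows the time-dependent gradient to get a $\mathbb{Z}_k$-equivariant isotopy of sublevel sets, hence an isomorphism from $G_{\mathbb{Z}_k,2nl}^{(a,\infty]}(F_t^{(j,1)})$ onto $G_{\mathbb{Z}_k,2nl}^{(a_2,\infty]}(F_t^{(j,2)})$; and (ii) it shows, via the long exact sequence of the triple of sublevel sets at $a_2$, $a$ and $\infty$, that the comparison map $G_{\mathbb{Z}_k,2nl}^{(a_2,\infty]}(F_t^{(j,2)})\rightarrow G_{\mathbb{Z}_k,2nl}^{(a,\infty]}(F_t^{(j,2)})$ is an isomorphism, using that every critical value in $(a_2,a]$ comes from a free critical submanifold of index $2nm$ with $m<l$, whose equivariant homology $H_\ast(L_k^{2n-1})$ is concentrated in degrees $2nm,\dots,2n(m+1)-1\leq 2nl-1$, so that $G_{\mathbb{Z}_k,2nl}^{(a_2,a]}=0$ (and both groups are one-dimensional over the field $\mathbb{Z}_k$). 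This continuation-plus-exact-sequence mechanism is exactly what your appeal to ``persistence of the equivariant Thom class'' would have to be unwound into; as written, the proposal asserts the conclusion rather than proving it.
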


\begin{proof}
Let $H^{(1)} (z_1, \dots, z_n)
= \sum_{j=1}^n \frac{\lvert z_j \rvert^2}{R_1^2}$
and $H^{(2)} (z_1, \dots, z_n)
= \sum_{j=1}^n \frac{\lvert z_j \rvert^2}{R_2^2}$,
and consider the cofinal sequences
$\{\varphi_t^{\rho_1, 1}\} \leq \{\varphi_t^{\rho_2, 1}\}
\leq \{\varphi_t^{\rho_3, 1}\} \leq \cdots$
and $\{\varphi_t^{\rho_1, 2}\} \leq \{\varphi_t^{\rho_2, 2}\}
\leq \{\varphi_t^{\rho_3, 2}\} \leq \cdots$
in $\mathcal{H}_{a, \infty}^k \big(B^{2n}(R_1)\big)$
and $\mathcal{H}_{a, \infty}^k \big(B^{2n}(R_2)\big)$
respectively,
where $\{\varphi_t^{\rho_j, 1}\}$ and $\{\varphi_t^{\rho_j, 2}\}$
are the flows of the Hamiltonian functions
$H_{\rho_j}^{(1)} = \rho_j \circ H^{(1)}$
and $H_{\rho_j}^{(2)} = \rho_j \circ H^{(2)}$.
For $s \in [1, 2]$ let $R_s = (2-s) R_1 + (s-1) R_2$,
and define
$H^{(s)} (z_1, \dots, z_n)
= \sum_{j=1}^n \frac{\lvert z_j \rvert^2}{R_s^2}$
and $H_{\rho_j}^{(s)} = \rho_j \circ H^{(s)}$.
Let $\{\varphi_t^{\rho_j, s}\}$ be the flow of $H_{\rho_j}^{(s)}$.
For every $j$,
the family $s \mapsto H_{\rho_j}^{(s)}$ is decreasing.
By \autoref{lemma: monotonicity special},
there is thus a 2-parameter family $F_t^{(j, s)}$
of special generating functions quadratic at infinity
for $\{\varphi_t^{\rho_j, s}\}$
such that $F_0^{(j, s)}$ is a non-degenerate quadratic form for every $s$,
$(F_t^{(j, s)})_{\infty}$ does not depend on $s$ and $t$,
and $\frac{d}{ds} \, F_t^{(j, s)} \leq 0$.
In particular,
$F_t^{(j, 1)}$ and $F_t^{(j, 2)}$
are 1-parameter families of special generating functions quadratic at infinity
for $\{\varphi_t^{\rho_{j},1}\}$ and $\{\varphi_t^{\rho_{j},2}\}$ respectively
with $F_t^{(j, 2)} \leq F_t^{(j, 1)}$.
Similarly to the proof of \autoref{proposition: calculation limit balls},
we have to show that for $j$ big enough
(so that $c_{j, l}^{(1)}$ and $c_{j, l}^{(2)}$ are bigger than $a$)
the homomorphism
\[
i_{\ast}: G_{\mathbb{Z}_k, 2nl}^{(a, \infty]} (F_t^{(j, 1)})
\longrightarrow 
G_{\mathbb{Z}_k, 2nl}^{(a, \infty]} (F_t^{(j, 2)})
\]
induced by the inclusion
is an isomorphism.
Let $s \mapsto a_s$ for $s \in [1, 2]$
be a decreasing path with $a_1 = a$
and such that, for every $s$,
$a_s$ is a regular value
of $\overline{( F_1^{(j, s)} )^{\sharp k}}$.
Then $a_2 < a < c_{j, l}^{(2)}$.
Moreover,
since $(F_1^{(j, s)})_{\infty}$ does not depend on $s$,
$\frac{d}{ds} F_1^{(j, s)}$ is bounded
and so $\frac{d}{ds} \, \overline{(F_1^{(j, s)})^{\sharp k}}$ is bounded.
We can thus apply \autoref{lemma: continuation}
to obtain a $\mathbb{Z}_k$-equivariant isotopy
mapping $\{ \overline{(F_1^{(j, 1)})^{\sharp k}} \leq a \}$
to $\{ \overline{(F_1^{(j, s)})^{\sharp k}} \leq a_s \}$.
In particular,
we get an isomorphism
\[
G_{\mathbb{Z}_k, 2nl}^{(a, \infty]} (F_t^{(j, 1)})
\xrightarrow{\cong}
G_{\mathbb{Z}_k, 2nl}^{(a_2, \infty]} (F_t^{(j, 2)})
\]
fitting into the commutative diagram
\[
\begin{tikzcd}
G_{\mathbb{Z}_k, 2nl}^{(a, \infty]} (F_t^{(j, 1)})
\arrow{r}{\cong} \arrow{rd}{i_{\ast}}
& G_{\mathbb{Z}_k, 2nl}^{(a_2, \infty]} (F_t^{(j, 2)})
\arrow{d}\\
& G_{\mathbb{Z}_k, 2nl}^{(a, \infty]} (F_t^{(j, 2)}) \,,
\end{tikzcd}
\]
where the vertical arrow is the map induced by the inclusion.
This map is an isomorphism by \autoref{proposition: calculation phi equivariant}.
We thus conclude that $i_{\ast}$ is an isomorphism.
\end{proof}



\begin{thebibliography}{99}

\bibitem[All22]{Allais} S. Allais, On periodic points of Hamiltonian diffeomorphisms of $\mathbb{CP}^d$ via generating functions. \textit{J. Symplectic Geom.} \textbf{20} (2022), 1-48. 


\bibitem[Bhu01]{Bhupal} M. Bhupal, A partial order on the group of contactomorphisms of $\mathbb{R}^{2n+1}$ via generating functions. \textit{Turkish J. Math.} \textbf{25} (2001), 125--135. 

\bibitem[Cha95]{Chaperon} M. Chaperon, On generating families. In \textit{The Floer memorial volume} (H. Hofer et al. eds), (Progr. Math., vol. 133),
Birkh\"auser, Basel, 1995, pp. 283–296.

\bibitem[Che96]{Chekanov} Y. Chekanov, Critical points of quasi-functions and generating families of Legendrian manifolds. \textit{Funct.
Anal. Appl.} \textbf{30} (1996), 118–128.
 
\bibitem[Chi17]{Chiu} S. Chiu, Non-squeezing property of contact balls. \textit{Duke Math. J.} \textbf{166} (2017), 605--655.

\bibitem[Eli91]{Eli91} Y. Eliashberg, New invariants of open symplectic and contact manifolds. \textit{J. Amer. Math. Soc.} \textbf{4} (1991), 513--520. 

\bibitem[EKP06]{EKP} Y. Eliashberg, S. S. Kim and L. Polterovich,
Geometry of contact transformations and domains: orderability versus squeezing. 
\textit{Geom. Topol.} \textbf{10} (2006), 1635--1747.

\bibitem[Fra15]{Fraser - persistence}
M. Fraser, Contact spectral invariants and persistence.
Preprint (2015), \textit{arXiv: 1502.05979}.

\bibitem[Fra16a]{Fraser} M. Fraser, Contact non-squeezing at large scale in $\mathbb{R}^{2n} \times S^1$. \textit{Internat. J. Math.} \textbf{27} (2016), 1650107.

\bibitem[Fra16b]{Fraser - poster} M. Fraser, Contact non-squeezing via generating functions: A low-tech proof in the language of persistence modules. Poster in Summer School 2016 on Symplectic Topology, Sheaves and Mirror Symmetry, Paris IJM-PRG, 2016.

\bibitem[FT]{FT} M. Fraser and L. Traynor, in preparation.

\bibitem[Gir10]{Giroux} E. Giroux, Sur la g\'eom\'etrie et la dynamique des transformations de contact (d'apr\`{e}s Y. Eliashberg, L. Polterovich et al.). \textit{S\'eminaire Bourbaki.} Volume 2008/2009. Expos\'es 997--1011. \textit{Ast\'erisque} (2010), no.332, Exp. No. 1004, viii, 183--220.

\bibitem[GKPS21]{lens} G. Granja, Y. Karshon, M. Pabiniak and S. Sandon, Givental's non-linear Maslov index on lens spaces. \textit{Int. Math. Res. Not.} 2021, no 23, 18225-18299.

\bibitem[Gro85]{Gromov} M. Gromov,
Pseudo holomorphic curves in symplectic manifolds. 
\textit{Invent. Math.} \textbf{82} (1985), 307--347. 

\bibitem[Mil08]{Milin} I. Milin, Orderability of contactomorphism groups of lens spaces. \textit{Ph.D.\ thesis, Stanford University} (2008).

\bibitem[San11a]{San11} S. Sandon, Contact homology, capacity and non-squeezing in $\mathbb{R}^{2n} \times S^1$ via generating functions. \textit{Ann. Inst. Fourier (Grenoble)} \textbf{61} (2011), 145--185. 

\bibitem[San11b]{S - equivariant} S. Sandon, Equivariant homology for generating functions and orderability of lens spaces. \textit{J. Symplectic Geom.} \textbf{9} (2011), 123--146. 

\bibitem[Sik87]{Sikorav87} J.C. Sikorav, Problemes d’intersections et de points fixes en g\'eom\'etrie hamiltonienne. \textit{Comment. Math.
Helv}. \textbf{62} (1987), 62–73.

\bibitem[The95]{Theret_thesis} D. Th\'eret, Utilisation des fonctions g\'en\'eratrices en g\'eom\'etrie symplectique globale. \textit{Ph.D.\ thesis, Universit\'e
Denis Diderot} (1995).

\bibitem[The99a]{Theret} D. Th\'eret, A complete proof of Viterbo's uniqueness theorem on generating functions. \textit{Topology Appl.} \textbf{96} (1999), 249--266. 

\bibitem[The99b]{Theret - camel} D. Th\'eret, A Lagrangian camel.
\textit{Comment. Math. Helv.} \textbf{74} (1999), 591--614.

\bibitem[Tra94]{Traynor} L. Traynor, Symplectic homology via generating functions. \textit{Geom. Funct. Anal.} \textbf{4} (1994), 718--748.

\bibitem[Vit92]{Viterbo} C. Viterbo, Symplectic topology as the geometry of generating functions. \textit{Math. Ann.} \textbf{292} (1992), 685--710.

\bibitem[Zha24]{Zhang} B. Zhang, Capacities from the Chiu-Tamarkin complex. \textit{J. Symplectic Geom.} \textbf{22} (2024), 441-524.

\end{thebibliography}
\end{document}